\documentclass[12pt]{article}

\usepackage{graphicx}
\usepackage{multicol,multirow}
\usepackage{ytableau}
\usepackage{genyoungtabtikz}
\usepackage{amsmath,amssymb,amsfonts}
\usepackage{amsthm}%
\usepackage{mathrsfs}
\usepackage{rotating}
\usepackage{appendix}
\usepackage[numbers]{natbib}
\usepackage{tikz}
\usetikzlibrary{positioning}
\usetikzlibrary {arrows.meta,bending}

\allowdisplaybreaks

\usepackage{xcolor}%
\newcommand{\twelve}{12}

{\theoremstyle{plain}%
 \newtheorem{theorem}{Theorem}

}
{\theoremstyle{remark}
\newtheorem{remark}{Remark}
}
{\theoremstyle{definition}
\newtheorem{definition}{Definition}
\newtheorem{example}{Example}
}

\begin{document}

\begin{center}
{\Large Partition algebras as monoid algebras}

 \ 

{\textsc{John M. Campbell}} 

 \ 

\end{center}

\begin{abstract}
Wilcox has considered a \emph{twisted} semigroup algebra structure on the partition algebra $\mathbb{C}A_k(n)$, but it 
 appears that there has not previously been any known basis that gives $\mathbb{C}A_k(n)$ the structure of a ``non-twisted'' semigroup 
 algebra or a monoid algebra. This motivates the following problem, for the non-degenerate case whereby $n \in \mathbb{C} 
 \setminus \{ 0, 1, \ldots, 2 k - 2 \}$ so that $ \mathbb{C}A_k(n)$ is semisimple. How could a basis $M_{k} = M$ of $ \mathbb{C}A_k(n)$ 
 be constructed so that $M$ is closed under the multiplicative operation on $\mathbb{C}A_k(n)$, in such a way so that $M$ is a 
 monoid under this operation, and how could a product rule for elements in $M$ be defined in an explicit and combinatorial way in 
 terms of partition diagrams? We construct a basis $M$ of the desired form using Halverson and Ram's matrix unit construction for 
 partition algebras, Benkart and Halverson's bijection between vacillating tableaux and set-partition tableaux, an analogue given by 
 Colmenarejo et al.\ for partition diagrams of the RSK correspondence, and a variant of a result due to Hewitt and Zuckerman 
 characterizing finite-dimensional semisimple algebras that are isomorphic to semigroup algebras.
\end{abstract}

\vspace{0.1in}

\noindent {{\footnotesize \emph{MSC:} 05E10, 20M25}}

\vspace{0.1in}

\noindent {{\footnotesize \emph{Keywords:}  partition algebra, semigroup algebra, twisted semigroup algebra, monoid
  algebra, partition diagram, Schur--Weyl duality, vacillating tableau, set-partition tableau, semisimple algebra, RSK 
 correspondence, matrix unit}}

\section{Introduction}\label{sectionIntro}
 \emph{Diagram algebras} provide an important area of study within algebraic combinatorics, with direct applications in statistical 
 mechanics, knot theory, and invariant theory. Diagram algebras broadly refer to subalgebras of the unital, associative algebra known as 
 the \emph{partition algebra} $ \mathbb{C}A_k(n)$, which emerged through the work of Martin 
 \cite{Martin1991,Martin1994,Martin1996,Martin2000} and Jones \cite{Jones1994}. Partition algebras may be seen as arising through a 
 counterpart to classical versions of Schur--Weyl duality relating the irreducible representations of the general linear group 
 $\text{GL}_{n}(\mathbb{C}) $ and the symmetric group $S_{k}$ via the actions of these groups on the tensor power $V^{\otimes k}$ for 
 an $n$-dimensional complex vector space $V$. As suggested by East \cite{East2011}, the complicated nature of the multiplicative 
 operation on $\mathbb{C}A_k(n)$, according to the usual bases of $\mathbb{C}A_k(n)$, motivates the development of simplified ways 
 of expressing or constructing or defining $\mathbb{C}A_k(n)$, as in with the use of presentations via generators and relations. In this 
 paper, we introduce a basis of $\mathbb{C}A_k(n)$ with a multiplication rule that may be thought of as being simpler and more ideal 
 relative to previously known bases of $\mathbb{C}A_k(n)$. Notably, and in contrast to previously known bases of $\mathbb{C}A_k(n)$, 
 our new basis is closed under the product operation on $\mathbb{C}A_k(n)$ and gives $\mathbb{C}A_k(n)$ the structure of a monoid 
 algebra (for the non-degenerate cases whereby $\mathbb{C}A_k(n)$ is semisimple). 

 Wilcox \cite{Wilcox2007}, in 2007, showed how diagram algebras are \emph{twisted semigroup algebras}, building on the work of Clark 
 \cite{Clark1967}. The two distinguished bases of $\mathbb{C}A_k(n)$ are the \emph{diagram basis} $\{ d_{\pi} \}_{\pi}$ and the 
 \emph{orbit basis} $\{ o_{\pi} \}_{\pi}$, and the multiplication rules for these canonical bases can be thought of as giving rise to the 
 twisted semigroup algebra structure on $ \mathbb{C}A_k(n)$. The $n = 1$ case gives us that $\mathbb{C}A_k(n)$ is equal to the 
 monoid algebra $\mathbb{C}A_k$ for the partition monoid $A_k$, but it seems that $\mathbb{C}A_k(n)$ has not otherwise been 
 considered as a ``non-twisted'' semigroup algebra or as a monoid algebra. Moreover, the twisted semigroup algebra structure on 
 $ \mathbb{C}A_k(n)$ does not give rise, in any direct way, to a semigroup algebra structure on $\mathbb{C}A_k(n)$, since, as it can be 
 shown, for any combination of nonzero scalars of the form $s_{\pi}$ for order-$k$ partition diagrams $\pi$, the families $\{ s_{\pi} d_{\pi} 
 \}_{\pi}$ and $\{ s_{\pi} o_{\pi} \}_{\pi}$ are not bases that are closed under the multiplicative operation on $\mathbb{C}A_k(n)$. 

 A result of basic importance in the representation theory of 
 partition algebras is such that if $n \in \mathbb{C} \setminus \{ 0, 1, 
 \ldots, 2 k - 2 \}$, then 
 $\mathbb{C}A_k(n)$ is semisimple. So, with the understanding that $n$ is not among the finite number of 
 degenerate cases such that $\mathbb{C}A_k(n)$ is not semisimple, the parameter $n$ can, informally, be ``disregarded'' in the sense 
 that it does not provide any information about $\mathbb{C}A_k(n)$ or its semisimple structure. The foregoing considerations lead us 
 toward the following problem that is central to our work. 

 \vspace{0.1in}

\noindent \emph{Problem:} Construct a basis $M_k = M$ of $\mathbb{C}A_k(n)$, for the non-degenerate case such that $n \in 
 \mathbb{C} \setminus \{ 0, 1, \ldots, 2 k - 2 \}$, such that 

 \vspace{0.1in}

\noindent $\bullet$ The basis $M$ is a monoid under the product operation on $\mathbb{C}A_k(n)$; and 

 \vspace{0.1in}
 
\noindent $\bullet$ The basis $M$ is indexed by (order-$k$) partition diagrams in such a way so that the product $m_{\alpha} 
 m_{\beta} \in M$ may be evaluated in an explicit and combinatorial way, i.e., as an element $m_{\gamma} \in M$ for a partition diagram 
 $\gamma$ obtained through the application of a combinatorial rule to the partition diagrams $\alpha$ and $\beta$. 

 \vspace{0.1in}

 We succeed in solving this problem. The main ingredients in our solution are given by 

 \vspace{0.1in}

\noindent $\bullet$ A matrix unit construction due to Halverson and Ram \cite{HalversonRam2005} 
 obtained through a formulation of the basic construction due to Bourbaki 
 \cite[\S2]{Bourbaki1990}; 

 \vspace{0.1in}

 \noindent $\bullet$ A bijection between vacillating tableaux and set-partition tableaux
 due to Benkart and Halverson \cite{BenkartHalverson2019trends}; 

 \vspace{0.1in}

 \noindent $\bullet$ An analogue 
 given by Colmenarejo et al.\ \cite{ColmenarejoOrellanaSaliolaSchillingZabrocki2020} 
 for partition diagrams of the RSK correspondence; and 

 \vspace{0.1in}

\noindent $\bullet$ A variant of a theorem due to Hewitt and Zuckerman \cite{HewittZuckerman1955} 
 giving a necessary and sufficient 
 condition for a finite-dimensional semisimple algebra over an algebraically closed field 
 to be isomorphic to a semigroup algebra. 

 \vspace{0.1in}

 Our construction of a monoid algebra from matrix units follows a similar approach as in our recent work on semisimple algebras and 
 immaculate tableaux \cite{Campbellunpublished}, but this past approach did not concern partition algebras or partition diagrams. 
 For partition diagrams $\alpha$ and $\beta$, 
 the product of elements $d_{\alpha}$ and $d_{\beta}$ in the diagram basis of $\mathbb{C}A_k(n)$ satisfies 
 \begin{equation*}
 d_{\alpha} d_{\beta} = n^{\ell(\alpha, \beta)} d_{\alpha \circ \beta} 
\end{equation*}
 for a statistic $\ell(\alpha, \beta)$ depending on $\alpha$ and $\beta$ and for a binary operation $\circ$ on partition diagrams that 
 we later review. The product of elements $o_{\alpha}$ and $o_{\beta}$ either vanishes or is equal to a nonvanishing linear 
 combination of orbit basis elements with $n$-polynomial coefficients. This illustrates how our new basis $M = M_{k} = \{ 
 m_{\pi} \}_{\pi}$ of $\mathbb{C}A_k(n)$ may be seen as more natural or more ideal compared with both $\{ d_{\pi} \}_{\pi}$ and $\{ 
 o_{\pi} \}_{\pi}$, if we consider the closure property 
\begin{equation}\label{malphambeta}
 m_{\alpha} m_{\beta} = m_{\gamma} 
\end{equation}
 relative to the diagram and orbit bases. 

\subsection{A motivating result}\label{subsectionmotivating}
 As a way of demonstrating the effectiveness of our construction and of clarifying how the parameter $n$ involved in the definition of 
 $\mathbb{C}A_k(n)$ can be ``disregarded'' in the sense described above, we provide a concrete and explicit illustration of our 
 construction, as below, referring to Section \ref{sectionbackground} for a full definition for $\mathbb{C}A_k(n)$. 

 Our main construction gives an explicitly defined basis $M = M_{k} = \{ m_{\pi} \}_{\pi}$ of $\mathbb{C}A_k(n)$ (again for the 
 non-degenerate cases) of the desired form described above. However, our construction can be modified to give rise to infinitely 
 many bases for $\mathbb{C}A_k(n)$ satisfying the given conditions, and this is described later in this paper. 
 For certain technical reasons, and for the purposes of the following illustration, 
 it is more convenient to use one of our modified bases, denoted here with 
 $\widetilde{M} = \widetilde{M}_{k} = \{ \widetilde{m}_{\pi} \}_{\pi}$. 

\begin{figure}
\begin{center}
{\tiny{
\begin{equation*}
\rotatebox{0}{ $ \left( \begin{tabular}{ c@{\hskip 0.006in}c@{\hskip 0.006in}c@{\hskip 0.006in}c@{\hskip 0.006in}c@{\hskip 0.006in}c@{\hskip 0.006in}c@{\hskip 0.006in}c@{\hskip 0.006in}c@{\hskip 0.006in}c@{\hskip 0.006in}c@{\hskip 0.006in}c@{\hskip 0.006in}c@{\hskip 0.006in}c@{\hskip 0.006in}c}
 $0$ & $0$ & $0$ & $0$ & $0$ & $0$ & $\tfrac{-1}{2}$ & $\tfrac{1}{2n}$ & $ \tfrac{1}{2}$ & $0$ & $ \tfrac{1}{2n}$ & $ \tfrac{-1}{2n}$ & 
 $ \tfrac{1}{2n}$ & $0$ & $ \tfrac{-1}{n^2}$ \\ 
 $0$ & $0$ & $0$ & $0$ & $0$ & $0$ & $\tfrac{-1}{2}$ & $\tfrac{1}{2n}$ & $\tfrac{1}{2}$ & $1$ & $ \tfrac{-3}{2n}$ & $ \tfrac{-1}{2n}$ & 
 $ \tfrac{1}{2n}$ & $\tfrac{-1}{n}$ & $ \tfrac{1}{n^2} $ \\ 
 $0$ & $0$ & $0$ & $0$ & $0$ & $0$ & $\tfrac{-1}{2}$ & $\tfrac{1}{2n}$ & $ \tfrac{1}{2}$ & $-1$ & $ \tfrac{1}{2n} $ & 
 $ \tfrac{-1}{2n}$ & $ \tfrac{2n-1}{2n}$ & $ \tfrac{1}{n} $ & $ \tfrac{-1}{n} $ \\ 
 $0$ & $0$ & $0$ & $0$ & $0$ & $0$ & $\tfrac{-1}{2}$ & $ \tfrac{1}{2n} $ & $ \tfrac{1}{2} $ & $0$ & $ \tfrac{-1}{2n} $ & 
 $ \tfrac{-1}{2n} $ & $ \tfrac{1}{2n} $ & $0$ & $ \tfrac{1}{n^2} $ \\ 
 $0$ & $0$ & $0$ & $0$ & $0$ & $0$ & $ \tfrac{-1}{2} $ & $ \tfrac{1}{2n} $ & $ \tfrac{1}{2} $ & $ 0 $ & $ \tfrac{-1}{2n} $ & 
 $ \tfrac{-1}{2n} $ & $ \tfrac{1}{2n} $ & $ \tfrac{1}{n} $ & $ \tfrac{-1}{n^2} $ \\ 
 $0$ & $ \tfrac{-1}{\mathcal{D}(n)} $ & $ 0 $ & $ 0 $ & $ \tfrac{1}{ \mathcal{D}(n) n } $ & $ 0$ & $ \tfrac{-1}{2} $ & $ \tfrac{1}{2(n-2)} $ & 
 $ \tfrac{1}{2} $ & $0$ & $ \tfrac{3-n}{2 \mathcal{D}(n) } $ & $ \tfrac{-1}{2n} $ & $ \tfrac{1}{2n} $ & $0$ & $ \tfrac{-1}{ \mathcal{D}(n) n } $ \\ 
 $0$ & $0$ & $0$ & $0$ & $0$ & $0$ & $ \tfrac{-1}{2} $ & $ \tfrac{1}{2n} $ & $ \tfrac{1}{2} $ & $ 0 $ & $ \tfrac{-1}{2n} $ & 
 $ \tfrac{-1}{2n} $ & $ \tfrac{1}{2n} $ & $ 0$ & $ 0 $ \\ 
 $\tfrac{-n}{\mathcal{D}(n)}$ & $\tfrac{1}{\mathcal{D}(n)}$ & $0$ & $\tfrac{1}{\mathcal{D}(n)}$ & $\tfrac{-1}{\mathcal{D}(n)n}$ & 
 $\tfrac{1}{n-2}$ & $\tfrac{-1}{2}$ & $\tfrac{n-4}{2(n-2)n}$ & $\tfrac{1}{2}$ & $\tfrac{1}{\mathcal{D}(n)}$ & 
 $\tfrac{3 n -n^2 -4}{2\mathcal{D}(n) n}$ & $\tfrac{-1}{2n}$ & $\tfrac{1}{2n}$ & $\tfrac{-1}{\mathcal{D}(n)}$ & 
 $\tfrac{1}{\mathcal{D}(n)n}$ \\ 
 $0$ & $0$ & $0$ & $0$ & $0$ & $0$ & $0$ & $0$ & $1$ & $0$ & $0$ & $0$ & $0$ & $0$ & $0$ \\ 
 $0$ & $\frac{1}{n-1}$ & $0$ & $0$ & $\frac{-1}{(n-1)n}$ & $0$ & $\frac{-1}{2}$ & $\frac{1}{2n}$ & $\frac{1}{2}$ & $0$ & 
 $\frac{n+1}{2(1-n)n}$ & $\frac{-1}{2n}$ & $\frac{1}{2n}$ & $0$ & $\frac{1}{(n-1)n^2}$ \\ 
 $\frac{n}{n-1}$ & $\frac{-1}{n-1}$ & $0$ & $\frac{-1}{n-1}$ & $\frac{1}{(n-1)n}$ & $0$ & $\frac{-1}{2}$ & $\frac{1}{2n}$ & 
 $\frac{1}{2}$ & $\frac{-1}{n-1}$ & $\frac{3-n}{2(n-1) n}$ & $\frac{-1}{2n}$ & $\frac{1}{2n}$ & $\frac{1}{(n-1)n}$ & 
 $\frac{-1}{(n-1)n^2}$ \\ 
 $\frac{n}{\mathcal{D}(n)}$ & $\frac{-1}{\mathcal{D}(n)}$ & $\frac{-1}{n-2}$ & $\frac{-1}{\mathcal{D}(n)}$ & 
 $\frac{1}{\mathcal{D}(n)}$ & $\frac{-1}{n-2}$ & $\frac{-1}{2}$ & $\frac{1}{2(n-2)}$ & $\frac{1}{2}$ & 
 $\frac{-1}{\mathcal{D}(n)}$ & $\frac{3-n}{2\mathcal{D}(n)} $ & $\frac{1}{2(n-2)}$ & $\frac{1}{2(n-2)}$ & 
 $\frac{1}{\mathcal{D}(n)}$ & $\frac{-1}{\mathcal{D}(n)}$ \\ 
 $\frac{-n}{n-1}$ & $\frac{1}{n-1}$ & $1$ & $ \frac{1}{n-1} $ & $\frac{-1}{n-1}$ & $0$ & $\frac{-1}{2}$ & $\frac{1}{2n}$ & 
 $\frac{1}{2}$ & $\frac{1}{n-1}$ & $\frac{n+1}{2(1-n)n}$ & $\frac{-1}{2n}$ & $\frac{-1}{2n}$ & $\frac{-1}{(n-1)n}$ & 
 $\frac{1}{(n-1)n}$ \\ 
 $0$ & $0$ & $0$ & $0$ & $\frac{1}{(n-1)n}$ & $0$ & $\frac{-1}{2}$ & $\frac{1}{2n}$ & $\frac{1}{2}$ & $0$ & $\frac{-1}{2n}$ & 
 $\frac{-1}{2n}$ & $\frac{1}{2n}$ & $0$ & $\frac{-1}{(n-1)n^2}$ \\ 
 $0$ & $0$ & $0$ & $\frac{1}{n-1}$ & $\frac{-1}{(n-1)n}$ & $0$ & $\frac{-1}{2}$ & $\frac{1}{2n}$ & $\frac{1}{2}$ & $0$ & 
 $\frac{-1}{2n}$ & $\frac{-1}{2n}$ & $\frac{1}{2n}$ & $\frac{-1}{(n-1)n}$ & $\frac{1}{(n-1)n^2}$ 
 \end{tabular} \right) $ }
\end{equation*}}}
\caption{\label{figuretransition}
 The transition matrix $\widetilde{\mathcal{M}}_{2}$, 
 with the abbreviation $\mathcal{D}(n) = (n-2)(n-1)$} 
\end{center}
\end{figure}

 Let $n \in \mathbb{C} \setminus \{ 0, 1, 2 \}$. Again referring to Section \ref{sectionbackground} for explicit definitions, let the 
 diagram basis for the 15-dimensional partition algebra $\mathbb{C}A_2(n)$ be ordered by writing $d_{\pi_{1}} < d_{\pi_{2}} < \cdots < 
 d_{\pi_{15}}$, for the set $\{ \pi_{1}, \pi_{2}, \ldots, \pi_{15} \}$ of order-$2$ partition diagrams, and according to the ordering for 
 diagram bases implemented in the {\tt SageMath} Computer Algebra System. Explicitly, this gives us the linear ordering whereby 

 \ 

\noindent $d_{\{ \{ 2', 1', 1, 2 \} \}}$ $ < $ $d_{\{ \{2', 1, 2\}, \{1'\} \}}$ $ < $ $d_{\{ \{2'\}, \{1', 1, 2\} \}}$ $ < $ $d_{\{ \{2', 1'\}, \{1, 
 2\} \}}$ $ < $ $d_{\{ \{2'\}, \{1'\}, \{1, 2\} \}}$ $ < $ $d_{\{ \{2', 1', 1\}, \{2\} \}}$ $ < $ 
 $d_{\{ \{2', 1\}, \{1', 2\} \}}$ $ < $ 
 $d_{\{ \{2', 1\}, \{1'\}, \{2\} \}}$ $ < $ 
 $d_{\{ \{2', 2\}, \{1', 1\} \}}$ $ < $ 
 $d_{\{ \{2', 1', 2\}, \{1\} \}}$ $ < $ 
 $d_{\{ \{2', 2\}, \{1'\}, \{1\} \}}$ $ < $ 
 $d_{\{ \{2'\}, \{1', 1\}, \{2\} \}}$ $ < $ 
 $d_{\{ \{2'\}, \{1', 2\}, \{1\} \}}$ $ < $ 
 $d_{\{ \{2', 1'\}, \{1\}, \{2\} \}}$ $ < $ 

\noindent $d_{\{ \{2'\}, \{1'\}, \{1\}, \{2\} \}}. $ 

 \ 

\noindent We then define a family $\{ \widetilde{m}_{\pi_{i}} : i \in \{ 1, 2, \ldots, 15 \} \}$ of elements in $\mathbb{C}A_2(n)$ by writing 
 $$ \left[ \widetilde{m}_{\pi_{1}} \ \widetilde{m}_{\pi_{2}} \ \cdots \ \widetilde{m}_{\pi_{15}} \right]^{\text{T}} = 
 \widetilde{\mathcal{M}}_{2} \, \left[ d_{\pi_{1}} \ d_{\pi_{2}} \ \cdots \ d_{\pi_{15}} \right]^{\text{T}}, $$ where $ 
 \widetilde{\mathcal{M}}_{2} $ denotes the $15 \times 15$ transition matrix given explicitly in Figure \ref{figuretransition}. 

 We find that the determinant of $\widetilde{\mathcal{M}}_{2}$ is $ -\frac{1}{2 (n-2)^3 (n-1)^2 n^7}$, giving us that 
 $\{ \widetilde{m}_{\pi} \}_{\pi}$ is a basis of $\mathbb{C}A_2(n)$, writing $\widetilde{M}_{2} = \widetilde{M} = \{ 
 \widetilde{m}_{\pi} \}_{\pi}$. A remarkable property about this new basis $\widetilde{M}$ is given by how it is closed under the 
 product operation on $\mathbb{C}A_2(n)$ (in a way that does not depend on the parameter $n$, in contrast to the diagram and orbit 
 bases) and has the structure of a monoid under this multiplicative operation, so that $\mathbb{C}A_2(n)$ is equal to the monoid algebra 
 $\mathbb{C}\widetilde{M}$. All possible evaluations of (twofold) products of elements in $\widetilde{M}$ are given in 
 Figure \ref{Figuremulttable}. 

\begin{figure}
\begin{center}
{\footnotesize
\noindent \begin{tabular}{ c | c@{\hskip 0.006in}c@{\hskip 0.006in}c@{\hskip 0.006in}c@{\hskip 0.006in}c@{\hskip 0.006in}c@{\hskip 0.006in}c@{\hskip 0.006in}c@{\hskip 0.006in}c@{\hskip 0.006in}c@{\hskip 0.006in}c@{\hskip 0.006in}c@{\hskip 0.006in}c@{\hskip 0.006in}c@{\hskip 0.006in}c}
 \null & $\text{\color{black}$\widetilde{m}_{\pi_{1}}$}$
 & $\text{\color{black}$\widetilde{m}_{\pi_{2}}$}$ 
 & $\text{\color{black}$\widetilde{m}_{\pi_{3}}$}$ 
 & $\text{\color{black}$\widetilde{m}_{\pi_{4}}$}$
 & $\text{\color{black}$\widetilde{m}_{\pi_{5}}$}$ 
 & $\text{\color{black}$\widetilde{m}_{\pi_{6}}$}$
 & $\text{\color{black}$\widetilde{m}_{\pi_{7}}$}$ 
 & $\text{\color{black}$\widetilde{m}_{\pi_{8}}$}$ 
 & $\text{\color{black}id}$ 
 & $\text{\color{black}$\widetilde{m}_{\pi_{10}}$}$ 
 & $\text{\color{black}$\widetilde{m}_{\pi_{11}}$}$ 
 & $\text{\color{black}$\widetilde{m}_{\pi_{12}}$}$ 
 & $\text{\color{black}$\widetilde{m}_{\pi_{13}}$}$ 
 & $\text{\color{black}$\widetilde{m}_{\pi_{14}}$}$ 
 & $\text{\color{black}$\widetilde{m}_{\pi_{15}}$}$ \\ \hline
 $\text{\color{black}$\widetilde{m}_{\pi_{1}}$}$ 
 & $\text{\color{black}$\widetilde{m}_{\pi_{1}}$}$ 
 & $\text{\color{black}$\widetilde{m}_{\pi_{2}}$}$ 
 & $\text{\color{black}$\widetilde{m}_{\pi_{3}}$}$ 
 & $\text{\color{black}$\widetilde{m}_{\pi_{7}}$}$ 
 & $\text{\color{black}$\widetilde{m}_{\pi_{7}}$}$ 
 & $\text{\color{black}$\widetilde{m}_{\pi_{7}}$}$ 
 & $\text{\color{black}$\widetilde{m}_{\pi_{7}}$}$ 
 & $\text{\color{black}$\widetilde{m}_{\pi_{7}}$}$ 
 & $\text{\color{black}$\widetilde{m}_{\pi_{1}}$}$ 
 & $\text{\color{black}$\widetilde{m}_{\pi_{7}}$}$ 
 & $\text{\color{black}$\widetilde{m}_{\pi_{7}}$}$ 
 & $\text{\color{black}$\widetilde{m}_{\pi_{7}}$}$ 
 & $\text{\color{black}$\widetilde{m}_{\pi_{7}}$}$ 
 & $\text{\color{black}$\widetilde{m}_{\pi_{7}}$}$ 
 & $\text{\color{black}$\widetilde{m}_{\pi_{7}}$}$ \\ 
 $\text{\color{black}$\widetilde{m}_{\pi_{2}}$}$ 
 & $\text{\color{black}$\widetilde{m}_{\pi_{7}}$}$ 
 & $\text{\color{black}$\widetilde{m}_{\pi_{7}}$}$ 
 & $\text{\color{black}$\widetilde{m}_{\pi_{7}}$}$ 
 & $\text{\color{black}$\widetilde{m}_{\pi_{7}}$}$ 
 & $\text{\color{black}$\widetilde{m}_{\pi_{7}}$}$ 
 & $\text{\color{black}$\widetilde{m}_{\pi_{7}}$}$ 
 & $\text{\color{black}$\widetilde{m}_{\pi_{7}}$}$ 
 & $\text{\color{black}$\widetilde{m}_{\pi_{7}}$}$
 & $\text{\color{black}$\widetilde{m}_{\pi_{2}}$}$ 
 & $\text{\color{black}$\widetilde{m}_{\pi_{1}}$}$ 
 & $\text{\color{black}$\widetilde{m}_{\pi_{2}}$}$ 
 & $\text{\color{black}$\widetilde{m}_{\pi_{7}}$}$ 
 & $\text{\color{black}$\widetilde{m}_{\pi_{3}}$}$ 
 & $\text{\color{black}$\widetilde{m}_{\pi_{7}}$}$ 
 & $\text{\color{black}$\widetilde{m}_{\pi_{7}}$}$ \\
 $\text{\color{black}$\widetilde{m}_{\pi_{3}}$}$ 
 & $\text{\color{black}$\widetilde{m}_{\pi_{7}}$}$ 
 & $\text{\color{black}$\widetilde{m}_{\pi_{7}}$}$ 
 & $\text{\color{black}$\widetilde{m}_{\pi_{7}}$}$ 
 & $\text{\color{black}$\widetilde{m}_{\pi_{7}}$}$ 
 & $\text{\color{black}$\widetilde{m}_{\pi_{7}}$}$ 
 & $\text{\color{black}$\widetilde{m}_{\pi_{1}}$}$ 
 & $\text{\color{black}$\widetilde{m}_{\pi_{7}}$}$ 
 & $\text{\color{black}$\widetilde{m}_{\pi_{2}}$}$
 & $\text{\color{black}$\widetilde{m}_{\pi_{3}}$}$ 
 & $\text{\color{black}$\widetilde{m}_{\pi_{7}}$}$ 
 & $\text{\color{black}$\widetilde{m}_{\pi_{7}}$}$ 
 & $\text{\color{black}$\widetilde{m}_{\pi_{3}}$}$ 
 & $\text{\color{black}$\widetilde{m}_{\pi_{7}}$}$ 
 & $\text{\color{black}$\widetilde{m}_{\pi_{7}}$}$
 & $\text{\color{black}$\widetilde{m}_{\pi_{7}}$}$ \\ 
 $\text{\color{black}$\widetilde{m}_{\pi_{4}}$}$ 
 & $\text{\color{black}$\widetilde{m}_{\pi_{7}}$}$ 
 & $\text{\color{black}$\widetilde{m}_{\pi_{7}}$}$ 
 & $\text{\color{black}$\widetilde{m}_{\pi_{7}}$}$ 
 & $\text{\color{black}$\widetilde{m}_{\pi_{4}}$}$ 
 & $\text{\color{black}$\widetilde{m}_{\pi_{5}}$}$ 
 & $\text{\color{black}$\widetilde{m}_{\pi_{7}}$}$ 
 & $\text{\color{black}$\widetilde{m}_{\pi_{7}}$}$ 
 & $\text{\color{black}$\widetilde{m}_{\pi_{7}}$}$ 
 & $\text{\color{black}$\widetilde{m}_{\pi_{4}}$}$ 
 & $\text{\color{black}$\widetilde{m}_{\pi_{7}}$}$ 
 & $\text{\color{black}$\widetilde{m}_{\pi_{7}}$}$ 
 & $\text{\color{black}$\widetilde{m}_{\pi_{7}}$}$ 
 & $\text{\color{black}$\widetilde{m}_{\pi_{7}}$}$ 
 & $\text{\color{black}$\widetilde{m}_{\pi_{7}}$}$ 
 & $\text{\color{black}$\widetilde{m}_{\pi_{7}}$}$ \\
 $\text{\color{black}$\widetilde{m}_{\pi_{5}}$}$ 
 & $\text{\color{black}$\widetilde{m}_{\pi_{7}}$}$ 
 & $\text{\color{black}$\widetilde{m}_{\pi_{7}}$}$ 
 & $\text{\color{black}$\widetilde{m}_{\pi_{7}}$}$ 
 & $\text{\color{black}$\widetilde{m}_{\pi_{7}}$}$ 
 & $\text{\color{black}$\widetilde{m}_{\pi_{7}}$}$ 
 & $\text{\color{black}$\widetilde{m}_{\pi_{7}}$}$ 
 & $\text{\color{black}$\widetilde{m}_{\pi_{7}}$}$ 
 & $\text{\color{black}$\widetilde{m}_{\pi_{7}}$}$ 
 & $\text{\color{black}$\widetilde{m}_{\pi_{5}}$}$ 
 & $\text{\color{black}$\widetilde{m}_{\pi_{7}}$}$ 
 & $\text{\color{black}$\widetilde{m}_{\pi_{7}}$}$ 
 & $\text{\color{black}$\widetilde{m}_{\pi_{7}}$}$ 
 & $\text{\color{black}$\widetilde{m}_{\pi_{7}}$}$ 
 & $\text{\color{black}$\widetilde{m}_{\pi_{4}}$}$ 
 & $\text{\color{black}$\widetilde{m}_{\pi_{5}}$}$ \\
 $\text{\color{black}$\widetilde{m}_{\pi_{6}}$}$ 
 & $\text{\color{black}$\widetilde{m}_{\pi_{6}}$}$ 
 & $\text{\color{black}$\widetilde{m}_{\pi_{8}}$}$ 
 & $\text{\color{black}$\widetilde{m}_{\pi_{12}}$}$ 
 & $\text{\color{black}$\widetilde{m}_{\pi_{7}}$}$ 
 & $\text{\color{black}$\widetilde{m}_{\pi_{7}}$}$ 
 & $\text{\color{black}$\widetilde{m}_{\pi_{7}}$}$ 
 & $\text{\color{black}$\widetilde{m}_{\pi_{7}}$}$ 
 & $\text{\color{black}$\widetilde{m}_{\pi_{7}}$}$ 
 & $\text{\color{black}$\widetilde{m}_{\pi_{6}}$}$ 
 & $\text{\color{black}$\widetilde{m}_{\pi_{7}}$}$ 
 & $\text{\color{black}$\widetilde{m}_{\pi_{7}}$}$ 
 & $\text{\color{black}$\widetilde{m}_{\pi_{7}}$}$ 
 & $\text{\color{black}$\widetilde{m}_{\pi_{7}}$}$ 
 & $\text{\color{black}$\widetilde{m}_{\pi_{7}}$}$ 
 & $\text{\color{black}$\widetilde{m}_{\pi_{7}}$}$ \\ 
 $\text{\color{black}$\widetilde{m}_{\pi_{7}}$}$ 
 & $\text{\color{black}$\widetilde{m}_{\pi_{7}}$}$ 
 & $\text{\color{black}$\widetilde{m}_{\pi_{7}}$}$ 
 & $\text{\color{black}$\widetilde{m}_{\pi_{7}}$}$ 
 & $\text{\color{black}$\widetilde{m}_{\pi_{7}}$}$ 
 & $\text{\color{black}$\widetilde{m}_{\pi_{7}}$}$ 
 & $\text{\color{black}$\widetilde{m}_{\pi_{7}}$}$ 
 & $\text{\color{black}$\widetilde{m}_{\pi_{7}}$}$ 
 & $\text{\color{black}$\widetilde{m}_{\pi_{7}}$}$ 
 & $\text{\color{black}$\widetilde{m}_{\pi_{7}}$}$ 
 & $\text{\color{black}$\widetilde{m}_{\pi_{7}}$}$ 
 & $\text{\color{black}$\widetilde{m}_{\pi_{7}}$}$ 
 & $\text{\color{black}$\widetilde{m}_{\pi_{7}}$}$ 
 & $\text{\color{black}$\widetilde{m}_{\pi_{7}}$}$ 
 & $\text{\color{black}$\widetilde{m}_{\pi_{7}}$}$ 
 & $\text{\color{black}$\widetilde{m}_{\pi_{7}}$}$ \\
 $\text{\color{black}$\widetilde{m}_{\pi_{8}}$}$ 
 & $\text{\color{black}$\widetilde{m}_{\pi_{7}}$}$ 
 & $\text{\color{black}$\widetilde{m}_{\pi_{7}}$}$ 
 & $\text{\color{black}$\widetilde{m}_{\pi_{7}}$}$ 
 & $\text{\color{black}$\widetilde{m}_{\pi_{7}}$}$ 
 & $\text{\color{black}$\widetilde{m}_{\pi_{7}}$}$ 
 & $\text{\color{black}$\widetilde{m}_{\pi_{7}}$}$ 
 & $\text{\color{black}$\widetilde{m}_{\pi_{7}}$}$ 
 & $\text{\color{black}$\widetilde{m}_{\pi_{7}}$}$ 
 & $\text{\color{black}$\widetilde{m}_{\pi_{8}}$}$ 
 & $\text{\color{black}$\widetilde{m}_{\pi_{6}}$}$ 
 & $\text{\color{black}$\widetilde{m}_{\pi_{8}}$}$ 
 & $\text{\color{black}$\widetilde{m}_{\pi_{7}}$}$ 
 & $\text{\color{black}$\widetilde{m}_{\pi_{12}}$}$ 
 & $\text{\color{black}$\widetilde{m}_{\pi_{7}}$}$ 
 & $\text{\color{black}$\widetilde{m}_{\pi_{7}}$}$ \\
 $\text{\color{black}id}$ &
 $\text{\color{black}$\widetilde{m}_{\pi_{1}}$}$ &
 $\text{\color{black}$\widetilde{m}_{\pi_{2}}$}$ & 
 $\text{\color{black}$\widetilde{m}_{\pi_{3}}$}$ & 
 $\text{\color{black}$\widetilde{m}_{\pi_{4}}$}$ & 
 $\text{\color{black}$\widetilde{m}_{\pi_{5}}$}$ & 
 $\text{\color{black}$\widetilde{m}_{\pi_{6}}$}$ & 
 $\text{\color{black}$\widetilde{m}_{\pi_{7}}$}$ & 
 $\text{\color{black}$\widetilde{m}_{\pi_{8}}$}$ & 
 $\text{\color{black}id}$ &
 $\text{\color{black}$\widetilde{m}_{\pi_{10}}$}$ & 
 $\text{\color{black}$\widetilde{m}_{\pi_{11}}$}$ & 
 $\text{\color{black}$\widetilde{m}_{\pi_{12}}$}$ & 
 $\text{\color{black}$\widetilde{m}_{\pi_{13}}$}$ & 
 $\text{\color{black}$\widetilde{m}_{\pi_{14}}$}$ & 
 $\text{\color{black}$\widetilde{m}_{\pi_{15}}$}$ \\ 
 $\text{\color{black}$\widetilde{m}_{\pi_{10}}$}$ 
 & $\text{\color{black}$\widetilde{m}_{\pi_{10}}$}$ 
 & $\text{\color{black}$\widetilde{m}_{\pi_{11}}$}$ 
 & $\text{\color{black}$\widetilde{m}_{\pi_{13}}$}$ 
 & $\text{\color{black}$\widetilde{m}_{\pi_{7}}$}$ 
 & $\text{\color{black}$\widetilde{m}_{\pi_{7}}$}$ 
 & $\text{\color{black}$\widetilde{m}_{\pi_{7}}$}$ 
 & $\text{\color{black}$\widetilde{m}_{\pi_{7}}$}$ 
 & $\text{\color{black}$\widetilde{m}_{\pi_{7}}$}$ 
 & $\text{\color{black}$\widetilde{m}_{\pi_{10}}$}$ 
 & $\text{\color{black}$\widetilde{m}_{\pi_{7}}$}$ 
 & $\text{\color{black}$\widetilde{m}_{\pi_{7}}$}$ 
 & $\text{\color{black}$\widetilde{m}_{\pi_{7}}$}$ 
 & $\text{\color{black}$\widetilde{m}_{\pi_{7}}$}$ 
 & $\text{\color{black}$\widetilde{m}_{\pi_{7}}$}$ 
 & $\text{\color{black}$\widetilde{m}_{\pi_{7}}$}$ \\
 $\text{\color{black}$\widetilde{m}_{\pi_{11}}$}$ 
 & $\text{\color{black}$\widetilde{m}_{\pi_{7}}$}$ 
 & $\text{\color{black}$\widetilde{m}_{\pi_{7}}$}$ 
 & $\text{\color{black}$\widetilde{m}_{\pi_{7}}$}$ 
 & $\text{\color{black}$\widetilde{m}_{\pi_{7}}$}$ 
 & $\text{\color{black}$\widetilde{m}_{\pi_{7}}$}$ 
 & $\text{\color{black}$\widetilde{m}_{\pi_{7}}$}$ 
 & $\text{\color{black}$\widetilde{m}_{\pi_{7}}$}$ 
 & $\text{\color{black}$\widetilde{m}_{\pi_{7}}$}$ 
 & $\text{\color{black}$\widetilde{m}_{\pi_{11}}$}$ 
 & $\text{\color{black}$\widetilde{m}_{\pi_{10}}$}$ 
 & $\text{\color{black}$\widetilde{m}_{\pi_{11}}$}$ 
 & $\text{\color{black}$\widetilde{m}_{\pi_{7}}$}$ 
 & $\text{\color{black}$\widetilde{m}_{\pi_{13}}$}$ 
 & $\text{\color{black}$\widetilde{m}_{\pi_{7}}$}$ 
 & $\text{\color{black}$\widetilde{m}_{\pi_{7}}$}$ \\
 $\text{\color{black}$\widetilde{m}_{\pi_{12}}$}$ 
 & $\text{\color{black}$\widetilde{m}_{\pi_{7}}$}$ 
 & $\text{\color{black}$\widetilde{m}_{\pi_{7}}$}$ 
 & $\text{\color{black}$\widetilde{m}_{\pi_{7}}$}$ 
 & $\text{\color{black}$\widetilde{m}_{\pi_{7}}$}$ 
 & $\text{\color{black}$\widetilde{m}_{\pi_{7}}$}$ 
 & $\text{\color{black}$\widetilde{m}_{\pi_{6}}$}$ 
 & $\text{\color{black}$\widetilde{m}_{\pi_{7}}$}$ 
 & $\text{\color{black}$\widetilde{m}_{\pi_{8}}$}$ 
 & $\text{\color{black}$\widetilde{m}_{\pi_{12}}$}$ 
 & $\text{\color{black}$\widetilde{m}_{\pi_{7}}$}$ 
 & $\text{\color{black}$\widetilde{m}_{\pi_{7}}$}$ 
 & $\text{\color{black}$\widetilde{m}_{\pi_{12}}$}$ 
 & $\text{\color{black}$\widetilde{m}_{\pi_{7}}$}$ 
 & $\text{\color{black}$\widetilde{m}_{\pi_{7}}$}$ 
 & $\text{\color{black}$\widetilde{m}_{\pi_{7}}$}$ \\
 $\text{\color{black}$\widetilde{m}_{\pi_{13}}$}$ 
 & $\text{\color{black}$\widetilde{m}_{\pi_{7}}$}$ 
 & $\text{\color{black}$\widetilde{m}_{\pi_{7}}$}$ 
 & $\text{\color{black}$\widetilde{m}_{\pi_{7}}$}$ 
 & $\text{\color{black}$\widetilde{m}_{\pi_{7}}$}$ 
 & $\text{\color{black}$\widetilde{m}_{\pi_{7}}$}$ 
 & $\text{\color{black}$\widetilde{m}_{\pi_{10}}$}$ 
 & $\text{\color{black}$\widetilde{m}_{\pi_{7}}$}$ 
 & $\text{\color{black}$\widetilde{m}_{\pi_{11}}$}$ 
 & $\text{\color{black}$\widetilde{m}_{\pi_{13}}$}$ 
 & $\text{\color{black}$\widetilde{m}_{\pi_{7}}$}$ 
 & $\text{\color{black}$\widetilde{m}_{\pi_{7}}$}$ 
 & $\text{\color{black}$\widetilde{m}_{\pi_{13}}$}$ 
 & $\text{\color{black}$\widetilde{m}_{\pi_{7}}$}$ 
 & $\text{\color{black}$\widetilde{m}_{\pi_{7}}$}$ 
 & $\text{\color{black}$\widetilde{m}_{\pi_{7}}$}$ \\
 $\text{\color{black}$\widetilde{m}_{\pi_{14}}$}$ 
 & $\text{\color{black}$\widetilde{m}_{\pi_{7}}$}$ 
 & $\text{\color{black}$\widetilde{m}_{\pi_{7}}$}$ 
 & $\text{\color{black}$\widetilde{m}_{\pi_{7}}$}$ 
 & $\text{\color{black}$\widetilde{m}_{\pi_{14}}$}$ 
 & $\text{\color{black}$\widetilde{m}_{\pi_{15}}$}$ 
 & $\text{\color{black}$\widetilde{m}_{\pi_{7}}$}$ 
 & $\text{\color{black}$\widetilde{m}_{\pi_{7}}$}$ 
 & $\text{\color{black}$\widetilde{m}_{\pi_{7}}$}$ 
 & $\text{\color{black}$\widetilde{m}_{\pi_{14}}$}$ 
 & $\text{\color{black}$\widetilde{m}_{\pi_{7}}$}$ 
 & $\text{\color{black}$\widetilde{m}_{\pi_{7}}$}$ 
 & $\text{\color{black}$\widetilde{m}_{\pi_{7}}$}$ 
 & $\text{\color{black}$\widetilde{m}_{\pi_{7}}$}$ 
 & $\text{\color{black}$\widetilde{m}_{\pi_{7}}$}$ 
 & $\text{\color{black}$\widetilde{m}_{\pi_{7}}$}$ \\
 $\text{\color{black}$\widetilde{m}_{\pi_{15}}$}$ 
 & $\text{\color{black}$\widetilde{m}_{\pi_{7}}$}$ 
 & $\text{\color{black}$\widetilde{m}_{\pi_{7}}$}$ 
 & $\text{\color{black}$\widetilde{m}_{\pi_{7}}$}$ 
 & $\text{\color{black}$\widetilde{m}_{\pi_{7}}$}$ 
 & $\text{\color{black}$\widetilde{m}_{\pi_{7}}$}$ 
 & $\text{\color{black}$\widetilde{m}_{\pi_{7}}$}$ 
 & $\text{\color{black}$\widetilde{m}_{\pi_{7}}$}$ 
 & $\text{\color{black}$\widetilde{m}_{\pi_{7}}$}$ 
 & $\text{\color{black}$\widetilde{m}_{\pi_{15}}$}$ 
 & $\text{\color{black}$\widetilde{m}_{\pi_{7}}$}$ 
 & $\text{\color{black}$\widetilde{m}_{\pi_{7}}$}$ 
 & $\text{\color{black}$\widetilde{m}_{\pi_{7}}$}$ 
 & $\text{\color{black}$\widetilde{m}_{\pi_{7}}$}$ 
 & $\text{\color{black}$\widetilde{m}_{\pi_{14}}$}$ 
& $\text{\color{black}$\widetilde{m}_{\pi_{15}}$}$ 
 \end{tabular}
}
\caption{\label{Figuremulttable}
 The product operation on the monoid $\widetilde{M}_{2}$, writing $\text{id}$ in place of $\widetilde{m}_{\pi_{9}}$} 
\end{center}
\end{figure}

 To the best of our knowledge, our basis $\{ \widetilde{m}_{\pi} \}_{\pi}$ is the first known (and explicitly evaluated)
 basis of $\mathbb{C}A_2(n)$ that is closed 
 under the product operation on $\mathbb{C}A_2(n)$, and is the first known basis of $\mathbb{C}A_2(n)$ that has the structure of a 
 semigroup or a monoid under the multiplicative 
 operation on $\mathbb{C}A_2(n)$. This motivates our full solution to the problem given above on the 
 construction of a basis $\{ m_{\pi} \}_{\pi}$ satisfying the desired properties for semisimple partition algebras in full generality. 

\section{Background and preliminaries}\label{sectionbackground}
 The background/preliminary material given below is necessary for the purposes of our construction of the basis $\{ m_{\pi} \}_{\pi}$ 
 of $\mathbb{C}A_k(n)$. 

\subsection{Partition algebras}
 Let $n$ be a positive integer. With regard to the usual actions of $\text{GL}_{n}(\mathbb{C})$ and $S_{k}$ on $V^{\otimes k}$, for an 
 $n$-dimensional vector space $V$ over $\mathbb{C}$ (with $\text{GL}_{n}(\mathbb{C})$ acting diagonally on $V^{\otimes k}$ and with 
 $S_{k}$ acting via the place-permutation action), classical versions of Schur--Weyl duality give us that the action of $S_{k}$ generates 
 the centralizer algebra $\text{End}_{\text{GL}_{n}(\mathbb{C})}(V^{\otimes k})$ and vice-versa, referring to Halverson and Ram's paper 
 on partition algebras \cite{HalversonRam2005} and related references for details. More explicitly, what is meant by $S_{k}$ 
 generating $\text{End}_{\text{GL}_{n}(\mathbb{C})}\big( V^{\otimes k} \big)$ refers to the existince of a surjective morphism from $ 
 \mathbb{C}S_k$ to $\text{End}_{\text{GL}_{n}(\mathbb{C})}(V^{\otimes k})$ given by the natural representation of $\mathbb{C}S_k$, with 
 the $n \geq k$ case yielding the isomorphism 
\begin{equation}\label{CSkcong}
 \mathbb{C}S_k \cong \text{End}_{\text{GL}_{n}(\mathbb{C})}\big( V^{\otimes k} \big). 
\end{equation}

 Schur--Weyl duality is often considered as a centerpiece of or a cornerstone of the entire discipline of representation theory, 
 and this is due to how it relates the 
 irreducible representations of symmetric and general linear groups. This leads to research efforts based on 
 variants and generalizations of relations as in \eqref{CSkcong}, and partition algebras can be thought of as emerging in this way. 
 Explicitly, if $2k \leq n$, then the partition algebra $\mathbb{C}A_k(n)$ defined below satisfies 
\begin{equation}\label{CAisomorphic}
 \mathbb{C}A_k(n) \cong \text{End}_{S_{n}}\big( V^{\otimes k} \big), 
\end{equation}
 where the action of $S_{n}$ on $V^{\otimes k}$ is given by taking the action 
 of $\text{GL}_{n}(\mathbb{C})$ and by restricting it to the group of $n \times n$ permutation matrices. 

 For a positive integer $k$, let $A_{k}$ denote the set of all set-partitions of $\{ 1$, $2$, $ \ldots$, $k$, $1'$, $2'$, $ \ldots$, $ k' \}$. 
 For an element $\pi$ in $A_{k}$, we may denote $\pi$ with any graph $G$ on $\{ 1$, $2$, $ \ldots$, $k$, $ 1'$, $2'$, $\ldots$, $k' \}$ 
 such that the connected components of $G$ are the members of $\pi$. Any two graphs on $\{ 1$, $2$, $\ldots$, $ k$, $1'$, $2'$, $ 
 \ldots$, $ k' \}$ with the same connected components are considered to be equivalent, and we may define a \emph{partition 
 diagram} as an equivalence class associated with the given equivalence relation, and we may denote any such equivalence class 
 with any of its members, with the notational convention whereby the ``unprimed'' (resp.\ primed) vertices are arranged into a top 
 (resp. bottom) row according to the ordering $1 < 2 < \cdots < k < 1' < 2' < \cdots < k'$. We may also identify a given partition 
 diagram with its underlying set partition. 

\begin{example}\label{labelforprop}
 We may denote the set-partition
 $\{ \{ 5'$, $4 \}$, $ \{ 4'$, $1$, $2$, $3 \}$, 
 $ \{ 3'$, $ 1' \}$, $ \{ 2' \}$, $ \{ 5 \} \}$
 in $A_{5}$ as 
$$ \begin{tikzpicture}[scale = 0.5,thick, baseline={(0,-1ex/2)}] 
\tikzstyle{vertex} = [shape = circle, minimum size = 7pt, inner sep = 1pt] 
\node[vertex] (G--5) at (6.0, -1) [shape = circle, draw] {}; 
\node[vertex] (G-4) at (4.5, 1) [shape = circle, draw] {}; 
\node[vertex] (G--4) at (4.5, -1) [shape = circle, draw] {}; 
\node[vertex] (G-1) at (0.0, 1) [shape = circle, draw] {}; 
\node[vertex] (G-2) at (1.5, 1) [shape = circle, draw] {}; 
\node[vertex] (G-3) at (3.0, 1) [shape = circle, draw] {}; 
\node[vertex] (G--3) at (3.0, -1) [shape = circle, draw] {}; 
\node[vertex] (G--1) at (0.0, -1) [shape = circle, draw] {}; 
\node[vertex] (G--2) at (1.5, -1) [shape = circle, draw] {}; 
\node[vertex] (G-5) at (6.0, 1) [shape = circle, draw] {}; 
\draw[] (G-4) .. controls +(0.75, -1) and +(-0.75, 1) .. (G--5); 
\draw[] (G-1) .. controls +(0.5, -0.5) and +(-0.5, -0.5) .. (G-2); 
\draw[] (G-2) .. controls +(0.5, -0.5) and +(-0.5, -0.5) .. (G-3); 
\draw[] (G-3) .. controls +(0.75, -1) and +(-0.75, 1) .. (G--4); 
\draw[] (G--4) .. controls +(-1, 1) and +(1, -1) .. (G-1); 
\draw[] (G--3) .. controls +(-0.6, 0.6) and +(0.6, 0.6) .. (G--1); 
\end{tikzpicture}$$
 and as 
$$ \begin{tikzpicture}[scale = 0.5,thick, baseline={(0,-1ex/2)}] 
\tikzstyle{vertex} = [shape = circle, minimum size = 7pt, inner sep = 1pt] 
\node[vertex] (G--5) at (6.0, -1) [shape = circle, draw] {}; 
\node[vertex] (G-4) at (4.5, 1) [shape = circle, draw] {}; 
\node[vertex] (G--4) at (4.5, -1) [shape = circle, draw] {}; 
\node[vertex] (G-1) at (0.0, 1) [shape = circle, draw] {}; 
\node[vertex] (G-2) at (1.5, 1) [shape = circle, draw] {}; 
\node[vertex] (G-3) at (3.0, 1) [shape = circle, draw] {}; 
\node[vertex] (G--3) at (3.0, -1) [shape = circle, draw] {}; 
\node[vertex] (G--1) at (0.0, -1) [shape = circle, draw] {}; 
\node[vertex] (G--2) at (1.5, -1) [shape = circle, draw] {}; 
\node[vertex] (G-5) at (6.0, 1) [shape = circle, draw] {}; 
\draw[] (G-4) .. controls +(0.75, -1) and +(-0.75, 1) .. (G--5); 
\draw[] (G-1) .. controls +(0.5, -0.5) and +(-0.5, -0.5) .. (G-2); 
%\draw[] (G-2) .. controls +(0.5, -0.5) and +(-0.5, -0.5) .. (G-3); 
\draw[] (G-3) .. controls +(0.75, -1) and +(-0.75, 1) .. (G--4); 
\draw[] (G--4) .. controls +(-1, 1) and +(1, -1) .. (G-1); 
\draw[] (G--3) .. controls +(-0.6, 0.6) and +(0.6, 0.6) .. (G--1); 
\end{tikzpicture}.$$
\end{example} 

  A partition diagram is said to be of \emph{order $k$} if it is in $A_{k}$. An element in the set-partition associated with a
    partition diagram $\pi$ may be referred to as a \emph{block}, and a          connected component of $\pi$ may also be 
 referred to as a \emph{block}. A block of an order-$k$ partition diagram is said to be \emph{propagating} if it contains at least one 
 element in $\{ 1, 2, \ldots, k \}$ and at least one element in $\{ 1', 2', \ldots, k' \}$. The \emph{propagation number} of $\pi$ is 
 equal to the number of propagating blocks in $\pi$. 

\begin{example}
 The propagation number of the partition diagram in Example \ref{labelforprop} is $2$. 
\end{example}

 For partition diagrams $d_{1}$ and $d_{2}$ in $A_{k}$, let $d_{1} \circ d_{2}$ denote the partition diagram in $A_{k}$ obtained by 
 positioning $d_{1}$ on top of $d_{2}$ and by identifying the vertices in the lower row of $d_{1}$ with the vertices in the upper row of 
 $d_{2}$, and by then removing the vertices of the central row so as to preserve the relation given by uppermost vertices being in the 
 same component as lowermost vertices. The binary operation $\circ$ gives $A_{k}$ the structure of a monoid referred to as the 
 \emph{partition monoid}, and the identity element in $A_{k}$ is $\{ \{ 1, 1' \}, \{ 2, 2' \}, \ldots, \{ k, k' \} \}$. 

\begin{example}
 In the partition monoid $A_{5}$, the $\circ$-product of 
 $$ \begin{tikzpicture}[scale = 0.5,thick, baseline={(0,-1ex/2)}] 
\tikzstyle{vertex} = [shape = circle, minimum size = 7pt, inner sep = 1pt] 
\node[vertex] (G--5) at (6.0, -1) [shape = circle, draw] {}; 
\node[vertex] (G--1) at (0.0, -1) [shape = circle, draw] {}; 
\node[vertex] (G--4) at (4.5, -1) [shape = circle, draw] {}; 
\node[vertex] (G-1) at (0.0, 1) [shape = circle, draw] {}; 
\node[vertex] (G-2) at (1.5, 1) [shape = circle, draw] {}; 
\node[vertex] (G-4) at (4.5, 1) [shape = circle, draw] {}; 
\node[vertex] (G-5) at (6.0, 1) [shape = circle, draw] {}; 
\node[vertex] (G--3) at (3.0, -1) [shape = circle, draw] {}; 
\node[vertex] (G--2) at (1.5, -1) [shape = circle, draw] {}; 
\node[vertex] (G-3) at (3.0, 1) [shape = circle, draw] {}; 
\draw[] (G--5) .. controls +(-0.8, 0.8) and +(0.8, 0.8) .. (G--1); 
\draw[] (G-1) .. controls +(0.5, -0.5) and +(-0.5, -0.5) .. (G-2); 
\draw[] (G-2) .. controls +(0.6, -0.6) and +(-0.6, -0.6) .. (G-4); 
\draw[] (G-4) .. controls +(0.5, -0.5) and +(-0.5, -0.5) .. (G-5); 
\draw[] (G-5) .. controls +(-0.75, -1) and +(0.75, 1) .. (G--4); 
\draw[] (G--4) .. controls +(-1, 1) and +(1, -1) .. (G-1); 
\draw[] (G-3) .. controls +(-0.75, -1) and +(0.75, 1) .. (G--2); 
\end{tikzpicture} $$
 and 
 $$ \begin{tikzpicture}[scale = 0.5,thick, baseline={(0,-1ex/2)}] 
\tikzstyle{vertex} = [shape = circle, minimum size = 7pt, inner sep = 1pt] 
\node[vertex] (G--5) at (6.0, -1) [shape = circle, draw] {}; 
\node[vertex] (G-2) at (1.5, 1) [shape = circle, draw] {}; 
\node[vertex] (G--4) at (4.5, -1) [shape = circle, draw] {}; 
\node[vertex] (G--3) at (3.0, -1) [shape = circle, draw] {}; 
\node[vertex] (G-4) at (4.5, 1) [shape = circle, draw] {}; 
\node[vertex] (G--2) at (1.5, -1) [shape = circle, draw] {}; 
\node[vertex] (G--1) at (0.0, -1) [shape = circle, draw] {}; 
\node[vertex] (G-1) at (0.0, 1) [shape = circle, draw] {}; 
\node[vertex] (G-3) at (3.0, 1) [shape = circle, draw] {}; 
\node[vertex] (G-5) at (6.0, 1) [shape = circle, draw] {}; 
\draw[] (G-2) .. controls +(1, -1) and +(-1, 1) .. (G--5); 
\draw[] (G-4) .. controls +(0, -1) and +(0, 1) .. (G--4); 
\draw[] (G--4) .. controls +(-0.5, 0.5) and +(0.5, 0.5) .. (G--3); 
\draw[] (G--3) .. controls +(0.75, 1) and +(-0.75, -1) .. (G-4); 
\draw[] (G-1) .. controls +(0.75, -1) and +(-0.75, 1) .. (G--2); 
\draw[] (G--2) .. controls +(-0.5, 0.5) and +(0.5, 0.5) .. (G--1); 
\draw[] (G--1) .. controls +(0, 1) and +(0, -1) .. (G-1); 
\end{tikzpicture} $$
 is 
 $$ \begin{tikzpicture}[scale = 0.5,thick, baseline={(0,-1ex/2)}] 
\tikzstyle{vertex} = [shape = circle, minimum size = 7pt, inner sep = 1pt] 
\node[vertex] (G--5) at (6.0, -1) [shape = circle, draw] {}; 
\node[vertex] (G-3) at (3.0, 1) [shape = circle, draw] {}; 
\node[vertex] (G--4) at (4.5, -1) [shape = circle, draw] {}; 
\node[vertex] (G--3) at (3.0, -1) [shape = circle, draw] {}; 
\node[vertex] (G-1) at (0.0, 1) [shape = circle, draw] {}; 
\node[vertex] (G-2) at (1.5, 1) [shape = circle, draw] {}; 
\node[vertex] (G-4) at (4.5, 1) [shape = circle, draw] {}; 
\node[vertex] (G-5) at (6.0, 1) [shape = circle, draw] {}; 
\node[vertex] (G--2) at (1.5, -1) [shape = circle, draw] {}; 
\node[vertex] (G--1) at (0.0, -1) [shape = circle, draw] {}; 
\draw[] (G-3) .. controls +(1, -1) and +(-1, 1) .. (G--5); 
\draw[] (G-1) .. controls +(0.5, -0.5) and +(-0.5, -0.5) .. (G-2); 
\draw[] (G-2) .. controls +(0.6, -0.6) and +(-0.6, -0.6) .. (G-4); 
\draw[] (G-4) .. controls +(0.5, -0.5) and +(-0.5, -0.5) .. (G-5); 
\draw[] (G-5) .. controls +(-0.75, -1) and +(0.75, 1) .. (G--4); 
\draw[] (G--4) .. controls +(-0.5, 0.5) and +(0.5, 0.5) .. (G--3); 
\draw[] (G--3) .. controls +(-1, 1) and +(1, -1) .. (G-1); 
\draw[] (G--2) .. controls +(-0.5, 0.5) and +(0.5, 0.5) .. (G--1); 
\end{tikzpicture}. $$
\end{example}

 For a nonnegative integer $k$, we let $A_{k + \frac{1}{2}}$ denote the submonoid of $A_{k+1}$ consisting of set-partitions $\pi$ in 
 $A_{k+1}$ such that $k + 1$ and $(k+1)'$ are in the same set in $\pi$. We henceforth let $n \in \mathbb{C}$, unless otherwise specified. 
 For $k \in \frac{1}{2} \mathbb{Z}_{>0}$, we may then define the \emph{partition algebra} $\mathbb{C}A_k(n)$ as the 
 $\mathbb{C}$-space spanned by $A_{k}$ endowed with the product operation whereby 
\begin{equation}\label{pimultdefinition}
 \pi_{1} \pi_{2} = n^{\ell(\pi_{1}, \pi_{2})} \, \pi_{1} \circ \pi_{2} 
\end{equation}
 for partition diagrams $\pi_1$ and $\pi_2$, where $\ell(\pi_{1}, \pi_{2})$ denotes the number of components contained entirely in the 
 middle row of the concatenation obtained by placing $\pi_{1}$ above $\pi_{2}$ and identifying the lower vertices of $\pi_{1}$ with the 
 upper vertices of $\pi_{2}$, and where 
 the multiplicative operation in \eqref{pimultdefinition} is extended 
 linearly. For the special case whereby $n$ is a natural number that satisfies $2k \leq n$, we obtain the isomorphic 
 equivalence in \eqref{CAisomorphic}. 

 The product rule in \eqref{pimultdefinition} gives rise to one of the two canonical bases of $\mathbb{C}A_k(n)$, and we adopt 
 the notational convention whereby, for a partition diagram or set-partition $\pi \in A_{k}$, when this is viewed as an element of 
 $ \mathbb{C}A_k(n)$, we may rewrite $\pi$ as $d_{\pi}$. This leads us to define \emph{diagram basis} of $\mathbb{C}A_k(n)$ 
 as $ \{ d_{\pi} \}_{\pi \in A_{k}} = \{ d_{\pi} \}_{\pi}$. 

\begin{example}\label{examplenotclosed}
 In the partition algebra $\mathbb{C}A_5(n)$, the product of 
 $$\begin{tikzpicture}[scale = 0.5,thick, baseline={(0,-1ex/2)}] 
\tikzstyle{vertex} = [shape = circle, minimum size = 7pt, inner sep = 1pt] 
\node[vertex] (G--5) at (6.0, -1) [shape = circle, draw] {}; 
\node[vertex] (G-4) at (4.5, 1) [shape = circle, draw] {}; 
\node[vertex] (G-5) at (6.0, 1) [shape = circle, draw] {}; 
\node[vertex] (G--4) at (4.5, -1) [shape = circle, draw] {}; 
\node[vertex] (G--2) at (1.5, -1) [shape = circle, draw] {}; 
\node[vertex] (G--3) at (3.0, -1) [shape = circle, draw] {}; 
\node[vertex] (G--1) at (0.0, -1) [shape = circle, draw] {}; 
\node[vertex] (G-1) at (0.0, 1) [shape = circle, draw] {}; 
\node[vertex] (G-2) at (1.5, 1) [shape = circle, draw] {}; 
\node[vertex] (G-3) at (3.0, 1) [shape = circle, draw] {}; 
\draw[] (G-4) .. controls +(0.5, -0.5) and +(-0.5, -0.5) .. (G-5); 
\draw[] (G-5) .. controls +(0, -1) and +(0, 1) .. (G--5); 
\draw[] (G--5) .. controls +(-0.75, 1) and +(0.75, -1) .. (G-4); 
\draw[] (G--4) .. controls +(-0.6, 0.6) and +(0.6, 0.6) .. (G--2); 
\draw[] (G-1) .. controls +(0, -1) and +(0, 1) .. (G--1); 
\draw[] (G-2) .. controls +(0.5, -0.5) and +(-0.5, -0.5) .. (G-3); 
\end{tikzpicture} $$
 and 
 $$ \begin{tikzpicture}[scale = 0.5,thick, baseline={(0,-1ex/2)}] 
\tikzstyle{vertex} = [shape = circle, minimum size = 7pt, inner sep = 1pt] 
\node[vertex] (G--5) at (6.0, -1) [shape = circle, draw] {}; 
\node[vertex] (G--2) at (1.5, -1) [shape = circle, draw] {}; 
\node[vertex] (G-1) at (0.0, 1) [shape = circle, draw] {}; 
\node[vertex] (G--4) at (4.5, -1) [shape = circle, draw] {}; 
\node[vertex] (G--1) at (0.0, -1) [shape = circle, draw] {}; 
\node[vertex] (G--3) at (3.0, -1) [shape = circle, draw] {}; 
\node[vertex] (G-5) at (6.0, 1) [shape = circle, draw] {}; 
\node[vertex] (G-2) at (1.5, 1) [shape = circle, draw] {}; 
\node[vertex] (G-3) at (3.0, 1) [shape = circle, draw] {}; 
\node[vertex] (G-4) at (4.5, 1) [shape = circle, draw] {}; 
\draw[] (G-1) .. controls +(1, -1) and +(-1, 1) .. (G--5); 
\draw[] (G--5) .. controls +(-0.7, 0.7) and +(0.7, 0.7) .. (G--2); 
\draw[] (G--2) .. controls +(-0.75, 1) and +(0.75, -1) .. (G-1); 
\draw[] (G--4) .. controls +(-0.7, 0.7) and +(0.7, 0.7) .. (G--1); 
\draw[] (G-5) .. controls +(-1, -1) and +(1, 1) .. (G--3); 
\draw[] (G-2) .. controls +(0.5, -0.5) and +(-0.5, -0.5) .. (G-3); 
\draw[] (G-3) .. controls +(0.5, -0.5) and +(-0.5, -0.5) .. (G-4); 
\draw[] (G-4) .. controls +(-0.6, -0.6) and +(0.6, -0.6) .. (G-2); 
\end{tikzpicture} $$
 is the element 
 $$ n \, \begin{tikzpicture}[scale = 0.5,thick, baseline={(0,-1ex/2)}] 
\tikzstyle{vertex} = [shape = circle, minimum size = 7pt, inner sep = 1pt] 
\node[vertex] (G--5) at (6.0, -1) [shape = circle, draw] {}; 
\node[vertex] (G--2) at (1.5, -1) [shape = circle, draw] {}; 
\node[vertex] (G-1) at (0.0, 1) [shape = circle, draw] {}; 
\node[vertex] (G--4) at (4.5, -1) [shape = circle, draw] {}; 
\node[vertex] (G--1) at (0.0, -1) [shape = circle, draw] {}; 
\node[vertex] (G--3) at (3.0, -1) [shape = circle, draw] {}; 
\node[vertex] (G-4) at (4.5, 1) [shape = circle, draw] {}; 
\node[vertex] (G-5) at (6.0, 1) [shape = circle, draw] {}; 
\node[vertex] (G-2) at (1.5, 1) [shape = circle, draw] {}; 
\node[vertex] (G-3) at (3.0, 1) [shape = circle, draw] {}; 
\draw[] (G-1) .. controls +(1, -1) and +(-1, 1) .. (G--5); 
\draw[] (G--5) .. controls +(-0.7, 0.7) and +(0.7, 0.7) .. (G--2); 
\draw[] (G--2) .. controls +(-0.75, 1) and +(0.75, -1) .. (G-1); 
\draw[] (G--4) .. controls +(-0.7, 0.7) and +(0.7, 0.7) .. (G--1); 
\draw[] (G-4) .. controls +(0.5, -0.5) and +(-0.5, -0.5) .. (G-5); 
\draw[] (G-5) .. controls +(-1, -1) and +(1, 1) .. (G--3); 
\draw[] (G--3) .. controls +(0.75, 1) and +(-0.75, -1) .. (G-4); 
\draw[] (G-2) .. controls +(0.5, -0.5) and +(-0.5, -0.5) .. (G-3); 
\end{tikzpicture} $$
 in $\mathbb{C}A_5(n)$. 
\end{example}

\subsection{Vacillating tableaux}
 By analogy with how the branching rules associated with the irreducible representations for symmetric groups give rise to Young's 
 lattice, with the paths in Young's lattice giving rise to standard Young tableaux, the branching rules associated with the irreducible 
 representations for partition algebras give rise to a Bratteli diagram that Halverson and Ram denote as $\hat{A}$ 
 \cite{HalversonRam2005} and that is illustrated in Figure \ref{FigureBratteli}, and this gives rise to combinatorial objects known as 
 \emph{vacillating tableaux}. Our construction of the first known basis of $\mathbb{C}A_k(n)$ that gives $\mathbb{C}A_k(n)$ the 
 structure of a monoid algebra relies on the use of vacillating tableaux and is motivated by past research on combinatorial properties 
 of vacillating tableaux 
 \cite{BerikkyzyHarrisPunYanZhao2024Integers,BerikkyzyHarrisPunYanZhao2024Comb,HalversonLewandowski200406,Krattenthaleronline}. 

 An \emph{integer partition} $\lambda$ is a weakly decreasing tuple of positive integers. We let $\ell(\lambda)$ denote the number of 
 entries of $\lambda$, and we write $\lambda = (\lambda_{1}, \lambda_{2}, \ldots, \lambda_{\ell(\lambda)})$. The \emph{order} of 
 $ \lambda$ is equal to the sum of the entries of $\lambda$ and may be denoted with $|\lambda|$, and we adopt the convention 
 whereby the empty partition, denoted as $\varnothing$, is the unique integer partition of $0$. For a positive integer $n$, the relation 
 $|\lambda| = n$ may be denoted by writing $\lambda \vdash n$. We may identify a given integer partition $\lambda$ with a 
 tableau given by an arrangement of cells or boxes (placed within a grid) such that the number of cells in the $i^{\text{th}}$ row of this 
 tableau is $\lambda_{i}$ for $i \in \{ 1, 2, \ldots, \ell(\lambda) \}$. We adopt the so-called French convention for denoting tableaux of 
 partition shape, so that the initial entry is denoted as the lowest entry. This leads us toward Halverson and Ram's definition for 
 $\hat{A} $ \cite{HalversonRam2005}, as below. The vertices of $\hat{A}$ are arranged so as to form horizontal rows indexed by 
 nonnegative half-integers, and are labeled in the manner indicated below. 

\begin{enumerate}

\item Let $\hat{A}_{k}$ (resp.\ $\hat{A}_{k + \frac{1}{2}}$) denote the set of vertices at level $k$ (resp.\ $k + \frac{1}{2}$). The vertices in 
 $\hat{A}_{k}$ (resp.\ $\hat{A}_{k}$) are labeled with integer partitions $\mu$ such that $k - |\mu| \in \mathbb{Z}_{\geq 0}$, so that 
 $|\hat{A}_{k}|$ (resp.\ $\hat{A}_{k+\frac{1}{2}}$) is equal to the cardinality of the set of such integer partitions; 

\item There is an edge incident with $\lambda \in \hat{A}_{k}$ and $\mu \in \hat{A}_{k + \frac{1}{2}}$ if $\lambda = \mu$ or if $\mu$ 
 may be obtained from $\lambda$ by removing a box; and 
 
\item There is an edge incident with $\mu \in \hat{A}_{k + \frac{1}{2}}$ and $\lambda \in \hat{A}_{k + 1}$ if $\lambda = \mu$ or if 
 $ \lambda$ may be obtained from $\mu$ by adding a box. 
 
\end{enumerate}

\begin{figure}
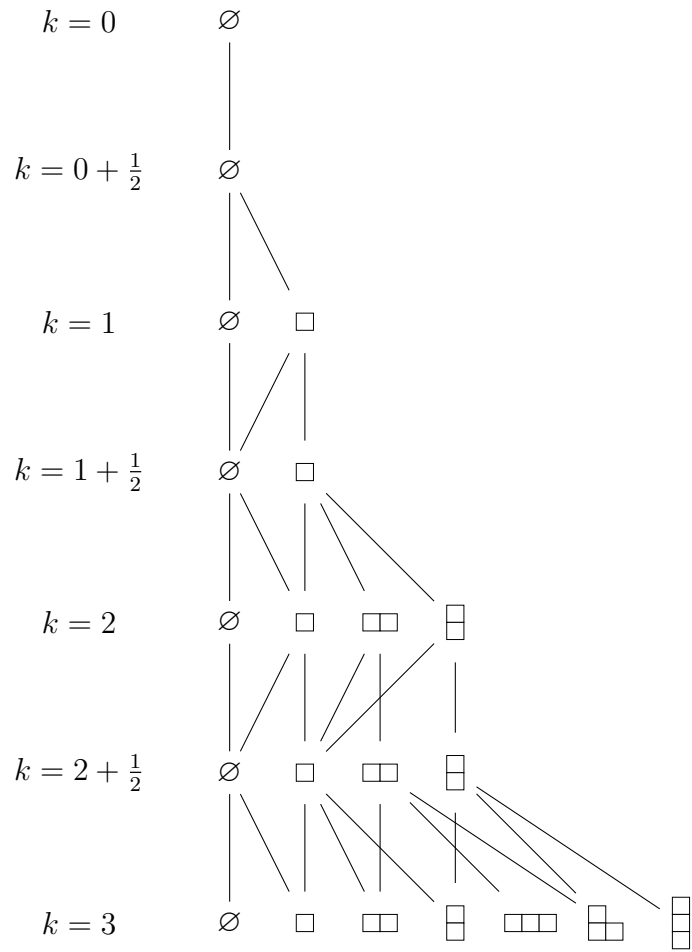

\begin{center}
\tikz {
 \begin{scope}[name prefix = levelfor0-]
 \node (A) at (0,12) {$k = 0$};
 \node (B) at (1,12) {\null};
 \node (C) at (2,12) {$\varnothing$};
 \end{scope}
 \begin{scope}[name prefix = unusedprefix1-]
 \node (unusedvertex1) at (0,11) {\null};
 \end{scope}
 \begin{scope}[name prefix = levelfor0half-]
 \node (A) at (0,10) {$k = 0 + \frac{1}{2}$};
 \node (B) at (1,10) {\null};
 \node (C) at (2,10) {$\varnothing$};
 \end{scope}
 \begin{scope}[name prefix = unusedprefix2-]
 \node (unusedvertex2) at (0,9) {\null};
 \end{scope}
 \begin{scope}[name prefix = levelfor1-]
 \node (A) at (0,8) {$k = 1$};
 \node (B) at (1,8) {\null};
 \node (C) at (2,8) {$\varnothing$};
 \node (D) at (3,8) {$ \Yvcentermath1 \Yboxdim{6.5pt} {\young(\null)}$};
 \end{scope}
 \begin{scope}[name prefix = unusedprefix3-]
 \node (unusedvertex3) at (0,7) {\null};
 \end{scope}
 \begin{scope}[name prefix = levelfor1half-]
 \node (A) at (0,6) {$k = 1 + \frac{1}{2}$};
 \node (B) at (1,6) {\null};
 \node (C) at (2,6) {$\varnothing$};
 \node (D) at (3,6) {$ \Yvcentermath1 \Yboxdim{6.5pt} {\young(\null)}$};
 \end{scope}
 \begin{scope}[name prefix = unusedprefix4-]
 \node (unusedvertex4) at (0,5) {\null};
 \end{scope}
 \begin{scope}[name prefix = levelfor2-]
 \node (A) at (0,4) {$k=2$};
 \node (B) at (1,4) {\null};
 \node (C) at (2,4) {$\varnothing$};
 \node (D) at (3,4) {$ \Yvcentermath1 \Yboxdim{6.5pt} {\young(\null)}$};
 \node (E) at (4,4) {$ \Yvcentermath1 \Yboxdim{6.5pt} {\young(\null\null)}$};
 \node (F) at (5,4) {$ \Yvcentermath1 \Yboxdim{6.5pt} {\young(\null,\null)}$};
 \end{scope}
 \begin{scope}[name prefix = unusedprefix5-]
 \node (unusedvertex5) at (0,3) {\null};
 \end{scope}
 \begin{scope}[name prefix = levelfor2half-]
 \node (A) at (0,2) {$k = 2 + \frac{1}{2}$};
 \node (B) at (1,2) {\null};
 \node (C) at (2,2) {$\varnothing$};
 \node (D) at (3,2) {$ \Yvcentermath1 \Yboxdim{6.5pt} {\young(\null)}$};
 \node (E) at (4,2) {$ \Yvcentermath1 \Yboxdim{6.5pt} {\young(\null\null)}$};
 \node (F) at (5,2) {$ \Yvcentermath1 \Yboxdim{6.5pt} {\young(\null,\null)}$};
 \end{scope}
 \begin{scope}[name prefix = unusedprefix6-]
 \node (unusedvertex6) at (0,1) {\null};
 \end{scope}
 \begin{scope}[name prefix = levelfor3-]
 \node (A) at (0,0) {$k = 3$};
 \node (B) at (1,0) {\null};
 \node (C) at (2,0) {$\varnothing$};
 \node (D) at (3,0) {$ \Yvcentermath1 \Yboxdim{6.5pt} {\young(\null)}$};
 \node (E) at (4,0) {$ \Yvcentermath1 \Yboxdim{6.5pt} {\young(\null\null)}$};
 \node (F) at (5,0) {$ \Yvcentermath1 \Yboxdim{6.5pt} {\young(\null,\null)}$};
 \node (G) at (6,0) {$ \Yvcentermath1 \Yboxdim{6.5pt} {\young(\null\null\null)}$};
 \node (H) at (7,0) {$ \Yvcentermath1 \Yboxdim{6.5pt} {\young(\null,\null\null)}$};
 \node (I) at (8,0) {$ \Yvcentermath1 \Yboxdim{6.5pt} {\young(\null,\null,\null)}$};
 \end{scope}

 \draw [black] (levelfor2half-C) -- (levelfor3-C);
 \draw [black] (levelfor1-C) -- (levelfor1half-C);
 \draw [black] (levelfor1half-C) -- (levelfor2-C);
 \draw [black] (levelfor2-C) -- (levelfor2half-C);
 \draw [black] (levelfor0half-C) -- (levelfor1-C);
 \draw [black] (levelfor0-C) -- (levelfor0half-C);
 \draw [black] (levelfor2half-C) -- (levelfor3-D);
 \draw [black] (levelfor2half-D) -- (levelfor3-E);
 \draw [black] (levelfor2half-D) -- (levelfor3-D);
 \draw [black] (levelfor2half-E) -- (levelfor3-E);
 \draw [black] (levelfor2half-F) -- (levelfor3-F);
 \draw [black] (levelfor2half-D) -- (levelfor2-D);
 \draw [black] (levelfor1half-D) -- (levelfor2-D);
 \draw [black] (levelfor1half-D) -- (levelfor1-D);
 \draw [black] (levelfor0half-C) -- (levelfor1-D);
 \draw [black] (levelfor1half-C) -- (levelfor1-D);
 \draw [black] (levelfor1half-C) -- (levelfor2-D);
 \draw [black] (levelfor2half-C) -- (levelfor2-D);
 \draw [black] (levelfor2half-E) -- (levelfor2-E);
 \draw [black] (levelfor2half-F) -- (levelfor2-F);
 \draw [black] (levelfor1half-D) -- (levelfor2-E);
 \draw [black] (levelfor1half-D) -- (levelfor2-F);
 \draw [black] (levelfor2half-D) -- (levelfor2-E);
 \draw [black] (levelfor2half-D) -- (levelfor2-F);
 \draw [black] (levelfor2half-D) -- (levelfor3-F);
 \draw [black] (levelfor2half-E) -- (levelfor3-G);
 \draw [black] (levelfor2half-E) -- (levelfor3-H);
 \draw [black] (levelfor2half-F) -- (levelfor3-H);
 \draw [black] (levelfor2half-F) -- (levelfor3-I);
}

\caption{\label{FigureBratteli}
 An illustration of the Bratteli diagram $\hat{A}$.} 
\end{center}
\end{figure}

 A \emph{vacillating tableau} refers to a finite tuple of integer partitions given by the consecutive vertices in a path in $\hat{A}$ starting 
 from level $0$. If this path ends on level $k$ of $\hat{A}$, then we refer to the corresponding vacillating tableau as being \emph{of 
 order $k$}. A fundamental result in the representation theory of partition algebras gives us that the irreducible representations of $ 
 \mathbb{C}A_k(n)$ (for the semisimple case) are indexed by $\hat{A}_{k}$ and that the dimension/multiplicity of a simple 
 $\mathbb{C} A_k(n)$-module corresponding to $\lambda \in \hat{A}_{k}$ is equal to the number of vacillating tableaux of of order $k$ 
 ending with $\lambda$. 

\begin{example}
 With reference to Figure \ref{FigureBratteli}, we find that there are $2$ vacillating tableaux corresponding to paths ending at 
 $\varnothing$ at level $k = 2$, namely 
\begin{equation*}
 \big( \varnothing, \varnothing, \varnothing, \varnothing, \varnothing \big) \ \ \ \text{and} \ \ \ \big( \varnothing, \varnothing, 
 \Yvcentermath1 \Yboxdim{6.5pt} {\young(\null)}, \varnothing, \varnothing \big). 
 \end{equation*}
 Similarly, we find that there are $3$ vacillating tableaux corresponding to paths ending at 
 $ \Yvcentermath1 \Yboxdim{6.5pt} {\young(\null)} $ at level $k = 2$, namely 
\begin{equation*}
 \big( \varnothing, \varnothing, \varnothing, \varnothing, 
 \Yvcentermath1 \Yboxdim{6.5pt} {\young(\null)} \big), \ \ \ 
 \big( \varnothing, \varnothing, \Yvcentermath1 \Yboxdim{6.5pt} {\young(\null)}, \varnothing, 
 \Yvcentermath1 \Yboxdim{6.5pt} {\young(\null)} \big), \ \ \ 
 \text{and} \ \ \ \big( \varnothing, \varnothing, 
 \Yvcentermath1 \Yboxdim{6.5pt} {\young(\null)}, \Yvcentermath1 \Yboxdim{6.5pt} {\young(\null)}, 
 \Yvcentermath1 \Yboxdim{6.5pt} {\young(\null)} \big). 
 \end{equation*}
 Also, there is a unique vacillating tableau corresponding to the path ending at 
 $ \Yvcentermath1 \Yboxdim{6.5pt} {\young(\null\null)} $ at level $k = 2$, namely 
\begin{equation*}
 \big( \varnothing, \varnothing, 
 \Yvcentermath1 \Yboxdim{6.5pt} {\young(\null)},
 \Yvcentermath1 \Yboxdim{6.5pt} {\young(\null)}, 
 \Yvcentermath1 \Yboxdim{6.5pt} {\young(\null\null)} \big). 
 \end{equation*}
 Finally, there is a unique vacillating tableau corresponding to the path ending at 
 $ \Yvcentermath1 \Yboxdim{6.5pt} {\young(\null,\null)} $ at level $k = 2$, namely 
\begin{equation*}
 \big( \varnothing, \varnothing, 
 \Yvcentermath1 \Yboxdim{6.5pt} {\young(\null)},
 \Yvcentermath1 \Yboxdim{6.5pt} {\young(\null)}, 
 \Yvcentermath1 \Yboxdim{6.5pt} {\young(\null,\null)} \big). 
 \end{equation*}
 The enumerations given above can be thought of as providing a decomposition of the dimension 
 of $\mathbb{C}A_2(n)$ based on the dimensions and multiplicities of the 
 simple $\mathbb{C}A_2(n)$-modules, writing 
 $$ \dim \, \mathbb{C}A_2(n) = 2^2 + 3^2 + 1^2 + 1^2. $$
\end{example}

\subsection{Halverson and Ram's matrix unit construction}
 For a finite-dimensional algebra, this algebra is said to be \emph{semisimple} if it can be expressed, up to isomorphism, as a direct sum 
 of full matrix algebras. Let $A$ be a finite-dimensional and semisimple algebra, and let $\varphi$ denote any isomorphism from a 
 direct sum of full matrix algebras to $A$. A \emph{matrix unit basis} for $A$ may then be defined as the image under $\varphi$ 
 of the basis $U$ of the domain of $\varphi$ such that every element in $U$ may be written as a 
 \emph{matrix unit}, i.e., 
 as a matrix with a unique $1$-entry and with $0$-entries everywhere else. 
 Elements in any matrix unit basis of $A$ may be referred to as \emph{matrix units} of $A$. 

 The importance of obtaining explicit evaluations for elements in matrix unit bases for semisimple algebras related to symmetric 
 group algebras is reflected by \emph{Young's construction} of matrix unit bases for symmetric group algebras, in view of the 
 importance of Young's construction within both representation theory and combinatorics. The first known construction of matrix unit 
 bases for partition algebras is due to Halverson and Ram \cite{HalversonRam2005} and relies on what is 
 referred to as the \emph{basic construction} given by Bourbaki 
 \cite[\S2]{Bourbaki1990} and is to be of key importance in our work. 

 Given any matrix unit basis $B$ of $\mathbb{C}S_k$, the Halverson--Ram matrix unit construction provides a matrix unit 
 basis $\{ e^{\lambda}_{v_{1}, v_{2}} \}_{\lambda, v_1, v_2}$ of $\mathbb{C}A_k(n)$ 
 defined in terms of $B$, where $v_1$ and $v_2$ 
 are vacillating tableaux of order $k$ 
 ending with the same partition $\lambda$ \cite{HalversonRam2005}. The elements of the basis $\{ e^{\lambda}_{v_{1}, v_{2}} 
 \}_{\lambda, v_1, v_2}$ satisfy the matrix unit multiplication rules such that 
\begin{equation*}
 e_{w_1, w_2}^{\mu} e_{w_{3}, w_{4}}^{\nu} = \begin{cases} 
 0 & \text{if $\mu \neq \nu$}, \\ 
 0 & \text{if $w_2 \neq w_3$, and} \\ 
 e_{w_1, w_4}^{\mu} & \text{if $w_2 = w_3$ and $\mu = \nu$.} 
 \end{cases} 
\end{equation*}
 Since a basis $B$ of the specified form is required in Halverson and Ram's construction, we review Young's construction of symmetric 
 group algebra matrix units, referring to Garsia and E\u gecio\u glu's monograph \cite[\S1]{GarsiaEgecioglu2020} for further 
 preliminaries related to Young's construction. 
 
 A \emph{standard Young tableau} is a tableau $T$ of partition shape $\lambda$ with increasing rows and columns such that 
 the set of the labels of $T$ is $\{ 1, 2, \ldots, |\lambda| \}$. We let standard Young tableaux of the same 
 shape be ordered according to Young's First Letter Order relation
 $<_{YFLO}$ whereby one tableau $T_{1}$ precedes another $T_{2}$ if the first entry of 
 disagreement between $T_{1}$ and $T_{2}$ is lower in $T_{1}$. 
 The expression $f^{\lambda}$ denotes the number of standard Young tableaux of shape $\lambda$. We let 
 the tableaux of this form be denoted by writing $$ S_{1}^{\lambda} <_{YFLO} S_{2}^{\lambda} <_{YFLO} \cdots <_{YFLO} 
 S_{f^{\lambda}}^{\lambda}. $$ 

 The \emph{row} (resp.\ \emph{column}) \emph{group} $R(T)$ (resp.\ $C(T)$) for a Young tableau $T$ is the group 
 consisting of permutations on the labels of $T$ that leave the rows (resp.\ columns) unaltered up to rearrangements of the labels of a 
 given row (resp.\ column). This leads us to write $$ P(T) = \sum_{\alpha \in R(T)} \alpha \ \ \ 
 \text{and} \ \ \ N(T) = \sum_{\beta \in C(T)} \text{sign}(\beta) \beta. $$ We may then define Young's $ \gamma$-elements so that $$ 
 \gamma_{i}^{\lambda} = \frac{f^{\lambda}}{n!} N\left( S_{i}^{\lambda} \right) P\left( S_{i}^{\lambda} \right). $$ For two tableaux $T_{1} 
 $ and $T_{2}$ of the same shape and with the same sets of labels and without any repeated labels, 
 we let $\sigma_{T_{1}, T_{2}}$ denote the unique permutation such 
 that $T_{1} = \sigma_{T_{1}, T_{2}} T_{2}$, where permutations act on tableaux by permuting the labels, and we write 
 $\sigma_{S_{i}^{\lambda}, S_{j}^{\lambda}} = \sigma_{i, j}^{\lambda}$. 

 We are now in a position to provide Young's matrix unit formula whereby 
\begin{equation}\label{Youngmainformula}
 s_{i, j}^{\lambda} = \sigma_{i, j}^{\lambda} \gamma_{j}^{\lambda} 
 \left( 1 - \gamma_{j+1}^{\lambda} \right) 
 \left( 1 - \gamma_{j+2}^{\lambda} \right) \cdots 
 \left( 1 - \gamma_{f^{\lambda}}^{\lambda} \right). 
\end{equation}
 A remarkable result due to Young gives us that for every positive integer $k$, the family 
\begin{equation}\label{Youngsbasis}
 \{ s_{i, j}^{\lambda} : \lambda \vdash k, 1 \leq i, j \leq f^{\lambda} \} 
\end{equation}
 is a matrix unit basis of $ \mathbb{C}S_{k}$.

\begin{example}
 Let permutations be denoted with two-line notation. We may verify that 
\begin{equation}\label{Younghook11}
 s^{\Yvcentermath1 \Yboxdim{6.5pt} {\young(\null,\null\null)}}_{1, 1} 
 = \frac{1}{3} \bigl(\begin{smallmatrix} 
 1 & 2 & 3 \\ 1 & 2 & 3 
 \end{smallmatrix} \bigr) 
 - \frac{1}{3} \bigl(\begin{smallmatrix} 
 1 & 2 & 3 \\ 2 & 1 & 3 
 \end{smallmatrix} \bigr) 
 - \frac{1}{3} 
 \bigl(\begin{smallmatrix}
 1 & 2 & 3 \\ 3 & 1 & 2 
 \end{smallmatrix} \bigr) 
 + \frac{1}{3} \bigl(\begin{smallmatrix}
1 & 2 & 3 \\ 3 & 2 & 1 
\end{smallmatrix} \bigr), 
 \end{equation}
 and we may verify that the right-hand side of \eqref{Younghook11} is idempotent. We may also verify the evaluations such that 
\begin{equation*}
 s^{\Yvcentermath1 \Yboxdim{6.5pt} {\young(\null,\null\null)}}_{1, 2} 
 = \frac{1}{3} \bigl(\begin{smallmatrix} 
 1 & 2 & 3 \\ 1 & 3 & 2 
 \end{smallmatrix} \bigr) 
 - \frac{1}{3} \bigl(\begin{smallmatrix} 
 1 & 2 & 3 \\ 2 & 3 & 1 
 \end{smallmatrix} \bigr) 
 + \frac{1}{3} 
 \bigl(\begin{smallmatrix}
 1 & 2 & 3 \\ 3 & 1 & 2 
 \end{smallmatrix} \bigr) 
 - \frac{1}{3} \bigl(\begin{smallmatrix}
1 & 2 & 3 \\ 3 & 2 & 1 
\end{smallmatrix} \bigr) 
 \end{equation*}
 and such that 
\begin{equation*}
 s^{\Yvcentermath1 \Yboxdim{6.5pt} {\young(\null,\null\null)}}_{2, 1} 
 = \frac{1}{3} \bigl(\begin{smallmatrix} 
 1 & 2 & 3 \\ 1 & 3 & 2 
 \end{smallmatrix} \bigr) 
 - \frac{1}{3} \bigl(\begin{smallmatrix} 
 1 & 2 & 3 \\ 2 & 1 & 3 
 \end{smallmatrix} \bigr) 
 + \frac{1}{3} 
 \bigl(\begin{smallmatrix}
 1 & 2 & 3 \\ 2 & 3 & 1 
 \end{smallmatrix} \bigr) 
 - \frac{1}{3} \bigl(\begin{smallmatrix}
1 & 2 & 3 \\ 3 & 1 & 2 
\end{smallmatrix} \bigr), 
 \end{equation*}
 and we may use the above evaluations to check that 
 $$ s^{\Yvcentermath1 \Yboxdim{6.5pt} {\young(\null,\null\null)}}_{1, 2} 
 s^{\Yvcentermath1 \Yboxdim{6.5pt} {\young(\null,\null\null)}}_{2, 1} 
 = s^{\Yvcentermath1 \Yboxdim{6.5pt} {\young(\null,\null\null)}}_{1, 1} 
 \ \ \ \text{and} \ \ \ 
 s^{\Yvcentermath1 \Yboxdim{6.5pt} {\young(\null,\null\null)}}_{1, 2} 
 s^{\Yvcentermath1 \Yboxdim{6.5pt} {\young(\null,\null\null)}}_{1, 2} 
 = 0 \ \ \ \text{and} \ \ \ 
 s^{\Yvcentermath1 \Yboxdim{6.5pt} {\young(\null,\null\null)}}_{2, 1} 
 s^{\Yvcentermath1 \Yboxdim{6.5pt} {\young(\null,\null\null)}}_{2, 1} 
 = 0, $$ as desired. 
\end{example}

 Letting it be understood that we are working with a given partition algebra $\mathbb{C}A_k(n)$, we write $p_{i + \frac{1}{2}}$ in 
 place of the partition diagram corresponding to $\{ \{ 1, 1' \}, \ldots, \{ i-1, (i-1)' \}, \{ i, i+1, i', (i+1)' \}, \{ i+2, (i+2)' \}, \ldots, \{ k, 
 k' \} \}$, and we write $p_{i}$ in place of the partition diagram corresponding to $ \{ \{ 1, 1' \}, \ldots, \{ i-1, (i-1)' \}, \{ i \}, \{ i' 
 \}, \{ i + 1, (i + 1)' \}, \ldots, \{ k, k' \} \}$. For two elements $a_{1}$ and $a_{2}$ in $\mathbb{C}A_k(n)$, let the equivalence relation $\sim$ 
 be such that $a_{1} \sim a_{2}$ if and only if $a_{2}$ is a nonzero scalar multiple of $a_{1}$. We also let $\hat{A}_{k}^{\mu}$ denote the 
 set of order-$k$ vacillating tableaux corresponding to paths in $\hat{A}$ ending on $\mu$. 

 Let $\{ e^{\lambda}_{v_{1}, v_{2}} \}_{\lambda, v_1, v_2}$ denote the matrix unit basis of $\mathbb{C}A_k(n)$ 
 obtained through the application of Halverson and Ram's construction \cite{HalversonRam2005} 
 with the use of the matrix unit basis for $\mathbb{C}S_{k}$ in \eqref{Youngsbasis} 
 (and, as later clarified, with the use of vacillating tableaux of the form indicated in Definition \ref{Tfun} below), 
 again letting $v_{1}$ and $v_{2}$ be order-$k$ vacillating tableaux ending with the same partition $\lambda$. For the sake of 
 convenience, since the superscript of $ e^{\lambda}_{v_{1}, v_{2}}$ is determined by $v_{1}$ and $v_{2}$, we may 
 simplify our notation by 
 writing $ e_{v_{1}, v_{2}}$ in place of $ e^{\lambda}_{v_{1}, v_{2}}$. For a tuple $\mathcal{T}$, let $\mathcal{T}^{-}$ denote the tuple 
 obtained by removing the final entry of $\mathcal{T}$. For a partition $\lambda \in \hat{A}_{\ell}$ such that $\lambda \vdash \ell$, and 
 for a vacillating tableau $P \in \hat{A}_{\ell}^{\lambda}$, we may identify $\lambda$ with the standard Young tableau obtained by 
 following the same path (with the same sequence of vertices) in Young's lattice, relative to the path in $\hat{A}$ corresponding to $P$. 

 Our construction of the basis $\{ m_{\pi} \}_{\pi}$ of $\mathbb{C}A_k(n)$ can be adapted so as to be applicable with any family of 
 matrix units for $\mathbb{C}A_k(n)$. For the purposes of producing explicit demonstrations of our construction, as in Section 
 \ref{subsectionmotivating}, it is often more convenient to make use of a variant $\{ c_{v_{1}, v_{2}} e_{v_{1}, 
 v_{2}} \}_{v_1, v_2}$ of the basis $\{ e_{v_1, v_2} \}_{v_1, v_2} = \{ e_{v_{1}, v_{2}}^{\lambda} \}_{\lambda, v_1, v_2}$
 indicated above, 
 for nonzero scalars $ c_{v_{1}, v_{2}}$ such that 
 $\{ c_{v_{1}, v_{2}} e_{v_{1}, v_{2}} \}_{v_1, v_2}$ is a matrix unit basis of $\mathbb{C}A_k(n)$ (noting 
 that it is necessarily the case that $c_{v_{1}, v_{1}} = 1$). Because of this, and for the sake of brevity, we simplify the below 
 review of Halverson and Ram's construction through the use of equivalence classes with respect to $\sim$. 
 However, our construction of the basis $\{ m_{\pi} \}_{\pi}$ is explicit and uses the explicit version of the 
 Halverson--Ram matrix units, referring to the work of Halverson and Ram \cite{HalversonRam2005} on the 
 evaluation of the required 
 scalars, which are given in terms of the irreducible characters for partition algebras. 

 Halverson and Ram's construction of matrix units for 
 $\mathbb{C}A_{\ell}(n)$ relies on a recursion that gives us, for 
 $ \mu \in \hat{A}_{\ell}$ and $|\mu| \leq \ell - 1$ and $P, Q \in \hat{A}_{\ell}^{\mu}$, that 
\begin{equation}\label{HRrecsim}
 e_{P, Q} \sim e_{P^{-}, T} \, p_{\ell} \, e_{T, Q^{-}}, 
\end{equation}
 where $T$ can be chosen as any element of $\hat{A}_{\ell-\frac{1}{2}}^{\mu}$. For the purposes of our definition given below
 for \emph{Young--Halverson--Ram matrix units}, 
 letting $\lambda \vdash \ell$ and letting $P$ and $Q$ 
 be in $\hat{A}_{\ell}^{\lambda}$, we then set 
\begin{equation}\label{eintermss}
 e_{P, Q} = \left( 1 - z \right) s_{P, Q}^{\lambda} 
\end{equation}
 for 
\begin{equation}\label{zdoublesum}
 z = \sum_{{\substack{ \mu \in 
 \hat{A}_{\ell} \\ |\mu| \leq \ell - 1 }}} \sum_{P \in \hat{A}_{\ell}^{\mu}} e_{P, P}. 
\end{equation}
 An explicit version, with equality as opposed to 
 equality up to $\sim$, of the relation in \eqref{HRrecsim} is given by Halverson and Ram 
 \cite[Theorem 2.26]{HalversonRam2005}, and we invite the interested reader to review the given Halverson--Ram reference for the 
 explicit version of \eqref{HRrecsim}. 
 The family $$ \{ e_{P, Q} : \text{$P, Q \in \hat{A}_{\ell}^{\mu}$ for $|\mu| \leq \ell$} \} $$ is then a 
 matrix unit basis of $\mathbb{C}A_{\ell}(n)$ \cite{HalversonRam2005}, 
 so that the matrix unit multiplication rules are satisfied, i.e., so that $e_{P_{1}, P_{2}} e_{P_{3}, P_{4}}$ vanishes if 
 $P_{2} \neq P_{3}$, with $e_{P_{1}, P_{2}} e_{P_{3}, P_{4}} = e_{P_{1}, P_{4}} $ otherwise. 

 To obtain a matrix unit basis for $\mathbb{C}A_{\ell}(n)$ in an unambiguous way, we fix a choice of $T$, in view of 
 Halverson and Ram's recursion corresponding to \eqref{HRrecsim}. 

\begin{definition}\label{Tfun}
 For any vacillating tableau $\mathcal{R}$ ending on level $\ell$ in $\hat{A}$ such that the final entry in $\mathcal{R}$ is of order less than 
 or equal to $\ell - 1$, we set $T = T(\mathcal{R})$ as the vacillating tableau of length one less than the length of $\mathcal{R}$ 
 and obtained in the following manner. 

\begin{enumerate}

\item The last entry of $T$ is the same as the second-to-last entry of $\mathcal{R}$; 

\item The vacillating tableau obtained by removing the last entry of $T$ is in $\hat{A}_{\ell - 1}^{\mu}$, where $\mu$ is the last entry 
 of $\mathcal{R}$; 

\item The following entries of $T$, in reverse order after its second-to-last entry $\mu$, are given by successive applications (as many 
 times as possible) of steps (a) and (b) below 
 to plot out a path in the Bratteli diagram $\hat{A}$, starting in such a way so that the 
 third-to-last entry of $T$ is the the partition obtained by applying step (a) below 
 to $\mu$: 

\begin{enumerate}

\item Remove a box in the northernmost row so as to move upwards to the next level in $\hat{A}$; and 

\item Then move upwards to the next level in $\hat{A}$ and to a copy of the partition obtained in step (a); 

\end{enumerate}

\item Once two copies of ${\Yvcentermath1 \Yboxdim{6.5pt} {\young(\null)}}$ have been reached or the northernmost 
 copy of ${\Yvcentermath1 \Yboxdim{6.5pt} {\young(\null)}}$ has been reached or a copy of $\varnothing$ has been reached, 
 follow the unique path to the top of the Bratteli diagram along its leftmost column. 

\end{enumerate}
\end{definition}

\begin{example}
 According to Definition \ref{Tfun}, we have that 
 $$ T\big( \varnothing, \varnothing, 
 {\Yvcentermath1 \Yboxdim{6.5pt} {\young(\null)}}, 
 {\Yvcentermath1 \Yboxdim{6.5pt} {\young(\null)}}, 
 {\Yvcentermath1 \Yboxdim{6.5pt} {\young(\null,\null)}}, 
 {\Yvcentermath1 \Yboxdim{6.5pt} {\young(\null)}}, 
 {\Yvcentermath1 \Yboxdim{6.5pt} {\young(\null,\null)}} \big) 
 = \big( \varnothing, \varnothing, 
 {\Yvcentermath1 \Yboxdim{6.5pt} {\young(\null)}}, 
 {\Yvcentermath1 \Yboxdim{6.5pt} {\young(\null)}}, 
 {\Yvcentermath1 \Yboxdim{6.5pt} {\young(\null,\null)}}, 
 {\Yvcentermath1 \Yboxdim{6.5pt} {\young(\null)}} 
 \big), $$ 
 whereas $$ T\big( \varnothing, \varnothing, \varnothing, \varnothing, 
 {\Yvcentermath1 \Yboxdim{6.5pt} {\young(\null)}} \big) 
 = \big( \varnothing, \varnothing, {\Yvcentermath1 \Yboxdim{6.5pt} {\young(\null)}}, \varnothing \big). $$
\end{example}

\begin{definition}
 The \emph{Young--Halverson--Ram matrix unit basis} is the basis of $\mathbb{C}A_{\ell}$ obtained according to the explicit version of 
 \eqref{HRrecsim}, for the case whereby the vacillating tableau $T$ is as in Definition \ref{Tfun}, 
 together with \eqref{eintermss} and \eqref{zdoublesum}. 
\end{definition}

\begin{example}\label{exampleYHR}
 Under the assumption that $n \in \mathbb{C} \setminus \{ 0, 1, \ldots, 2k-2 \}$ so that $\mathbb{C}A_k(n)$ is semisimple, for the $k 
 = 2$ case, we have that 
\begin{multline*}
 e_{\big( \varnothing, \varnothing, {\Yvcentermath1 \Yboxdim{6.5pt} {\young(\null)}},
 \varnothing, \varnothing \big), \big( \varnothing, \varnothing, {\Yvcentermath1 \Yboxdim{6.5pt} {\young(\null)}},
 \varnothing, \varnothing \big)} 
 = \\ \frac{1}{n^{2}(n - 1)} \begin{tikzpicture}[scale = 0.5,thick, baseline={(0,-1ex/2)}] 
\tikzstyle{vertex} = [shape = circle, minimum size = 7pt, inner sep = 1pt] 
\node[vertex] (G--2) at (1.5, -1) [shape = circle, draw] {}; 
\node[vertex] (G--1) at (0.0, -1) [shape = circle, draw] {}; 
\node[vertex] (G-1) at (0.0, 1) [shape = circle, draw] {}; 
\node[vertex] (G-2) at (1.5, 1) [shape = circle, draw] {}; 
\end{tikzpicture} 
 - \frac{1}{n(n - 1)} \begin{tikzpicture}[scale = 0.5,thick, baseline={(0,-1ex/2)}] 
\tikzstyle{vertex} = [shape = circle, minimum size = 7pt, inner sep = 1pt] 
\node[vertex] (G--2) at (1.5, -1) [shape = circle, draw] {}; 
\node[vertex] (G--1) at (0.0, -1) [shape = circle, draw] {}; 
\node[vertex] (G-1) at (0.0, 1) [shape = circle, draw] {}; 
\node[vertex] (G-2) at (1.5, 1) [shape = circle, draw] {}; 
\draw[] (G-1) .. controls +(0.5, -0.5) and +(-0.5, -0.5) .. (G-2); 
\end{tikzpicture} 
- \frac{1}{n(n - 1)} \begin{tikzpicture}[scale = 0.5,thick, baseline={(0,-1ex/2)}] 
\tikzstyle{vertex} = [shape = circle, minimum size = 7pt, inner sep = 1pt] 
\node[vertex] (G--2) at (1.5, -1) [shape = circle, draw] {}; 
\node[vertex] (G--1) at (0.0, -1) [shape = circle, draw] {}; 
\node[vertex] (G-1) at (0.0, 1) [shape = circle, draw] {}; 
\node[vertex] (G-2) at (1.5, 1) [shape = circle, draw] {}; 
\draw[] (G--2) .. controls +(-0.5, 0.5) and +(0.5, 0.5) .. (G--1); 
\end{tikzpicture} 
 + \frac{1}{n - 1} \begin{tikzpicture}[scale = 0.5,thick, baseline={(0,-1ex/2)}] 
\tikzstyle{vertex} = [shape = circle, minimum size = 7pt, inner sep = 1pt] 
\node[vertex] (G--2) at (1.5, -1) [shape = circle, draw] {}; 
\node[vertex] (G--1) at (0.0, -1) [shape = circle, draw] {}; 
\node[vertex] (G-1) at (0.0, 1) [shape = circle, draw] {}; 
\node[vertex] (G-2) at (1.5, 1) [shape = circle, draw] {}; 
\draw[] (G--2) .. controls +(-0.5, 0.5) and +(0.5, 0.5) .. (G--1); 
\draw[] (G-1) .. controls +(0.5, -0.5) and +(-0.5, -0.5) .. (G-2); 
\end{tikzpicture} 
\end{multline*}
 and we may verify that the Young--Halverson--Ram matrix unit given above is idempotent, as expected. 
 We may also determine that 
\begin{multline*}
 e_{\big( \varnothing, \varnothing, {\Yvcentermath1 \Yboxdim{6.5pt} {\young(\null)}},
 \varnothing, {\Yvcentermath1 \Yboxdim{6.5pt} {\young(\null)}} 
 \big), \big( \varnothing, \varnothing, {\Yvcentermath1 \Yboxdim{6.5pt} {\young(\null)}},
 \varnothing, {\Yvcentermath1 \Yboxdim{6.5pt} {\young(\null)}} \big)} 
 = \\ -\frac{1}{n^{2}(n - 1)} \begin{tikzpicture}[scale = 0.5,thick, baseline={(0,-1ex/2)}] 
\tikzstyle{vertex} = [shape = circle, minimum size = 7pt, inner sep = 1pt] 
\node[vertex] (G--2) at (1.5, -1) [shape = circle, draw] {}; 
\node[vertex] (G--1) at (0.0, -1) [shape = circle, draw] {}; 
\node[vertex] (G-1) at (0.0, 1) [shape = circle, draw] {}; 
\node[vertex] (G-2) at (1.5, 1) [shape = circle, draw] {}; 
\end{tikzpicture} + 
 \frac{1}{n(n - 1)} \begin{tikzpicture}[scale = 0.5,thick, baseline={(0,-1ex/2)}] 
\tikzstyle{vertex} = [shape = circle, minimum size = 7pt, inner sep = 1pt] 
\node[vertex] (G--2) at (1.5, -1) [shape = circle, draw] {}; 
\node[vertex] (G--1) at (0.0, -1) [shape = circle, draw] {}; 
\node[vertex] (G-1) at (0.0, 1) [shape = circle, draw] {}; 
\node[vertex] (G-2) at (1.5, 1) [shape = circle, draw] {}; 
\draw[] (G-1) .. controls +(0.5, -0.5) and +(-0.5, -0.5) .. (G-2); 
\end{tikzpicture} + 
 \frac{1}{n(n - 1)} \begin{tikzpicture}[scale = 0.5,thick, baseline={(0,-1ex/2)}] 
\tikzstyle{vertex} = [shape = circle, minimum size = 7pt, inner sep = 1pt] 
\node[vertex] (G--2) at (1.5, -1) [shape = circle, draw] {}; 
\node[vertex] (G--1) at (0.0, -1) [shape = circle, draw] {}; 
\node[vertex] (G-1) at (0.0, 1) [shape = circle, draw] {}; 
\node[vertex] (G-2) at (1.5, 1) [shape = circle, draw] {}; 
\draw[] (G--2) .. controls +(-0.5, 0.5) and +(0.5, 0.5) .. (G--1); 
\end{tikzpicture} - 
 \frac{1}{n - 1} \begin{tikzpicture}[scale = 0.5,thick, baseline={(0,-1ex/2)}] 
\tikzstyle{vertex} = [shape = circle, minimum size = 7pt, inner sep = 1pt] 
\node[vertex] (G--2) at (1.5, -1) [shape = circle, draw] {}; 
\node[vertex] (G--1) at (0.0, -1) [shape = circle, draw] {}; 
\node[vertex] (G-1) at (0.0, 1) [shape = circle, draw] {}; 
\node[vertex] (G-2) at (1.5, 1) [shape = circle, draw] {}; 
\draw[] (G--2) .. controls +(-0.5, 0.5) and +(0.5, 0.5) .. (G--1); 
\draw[] (G-1) .. controls +(0.5, -0.5) and +(-0.5, -0.5) .. (G-2); 
\end{tikzpicture} + \\ \frac{n}{n - 1} \begin{tikzpicture}[scale = 0.5,thick, baseline={(0,-1ex/2)}] 
\tikzstyle{vertex} = [shape = circle, minimum size = 7pt, inner sep = 1pt] 
\node[vertex] (G--2) at (1.5, -1) [shape = circle, draw] {}; 
\node[vertex] (G--1) at (0.0, -1) [shape = circle, draw] {}; 
\node[vertex] (G-1) at (0.0, 1) [shape = circle, draw] {}; 
\node[vertex] (G-2) at (1.5, 1) [shape = circle, draw] {}; 
\draw[] (G-1) .. controls +(0.5, -0.5) and +(-0.5, -0.5) .. (G-2); 
\draw[] (G-2) .. controls +(0, -1) and +(0, 1) .. (G--2); 
\draw[] (G--2) .. controls +(-0.5, 0.5) and +(0.5, 0.5) .. (G--1); 
\draw[] (G--1) .. controls +(0, 1) and +(0, -1) .. (G-1); 
\end{tikzpicture} - 
 \frac{1}{n - 1} \begin{tikzpicture}[scale = 0.5,thick, baseline={(0,-1ex/2)}] 
\tikzstyle{vertex} = [shape = circle, minimum size = 7pt, inner sep = 1pt] 
\node[vertex] (G--2) at (1.5, -1) [shape = circle, draw] {}; 
\node[vertex] (G--1) at (0.0, -1) [shape = circle, draw] {}; 
\node[vertex] (G-2) at (1.5, 1) [shape = circle, draw] {}; 
\node[vertex] (G-1) at (0.0, 1) [shape = circle, draw] {}; 
\draw[] (G-2) .. controls +(0, -1) and +(0, 1) .. (G--2); 
\draw[] (G--2) .. controls +(-0.5, 0.5) and +(0.5, 0.5) .. (G--1); 
\draw[] (G--1) .. controls +(0.75, 1) and +(-0.75, -1) .. (G-2); 
\end{tikzpicture} - \frac{1}{n - 1} \begin{tikzpicture}[scale = 0.5,thick, baseline={(0,-1ex/2)}] 
\tikzstyle{vertex} = [shape = circle, minimum size = 7pt, inner sep = 1pt] 
\node[vertex] (G--2) at (1.5, -1) [shape = circle, draw] {}; 
\node[vertex] (G-1) at (0.0, 1) [shape = circle, draw] {}; 
\node[vertex] (G-2) at (1.5, 1) [shape = circle, draw] {}; 
\node[vertex] (G--1) at (0.0, -1) [shape = circle, draw] {}; 
\draw[] (G-1) .. controls +(0.5, -0.5) and +(-0.5, -0.5) .. (G-2); 
\draw[] (G-2) .. controls +(0, -1) and +(0, 1) .. (G--2); 
\draw[] (G--2) .. controls +(-0.75, 1) and +(0.75, -1) .. (G-1); 
\end{tikzpicture} + \frac{1}{n(n - 1)} \begin{tikzpicture}[scale = 0.5,thick, baseline={(0,-1ex/2)}] 
\tikzstyle{vertex} = [shape = circle, minimum size = 7pt, inner sep = 1pt] 
\node[vertex] (G--2) at (1.5, -1) [shape = circle, draw] {}; 
\node[vertex] (G-2) at (1.5, 1) [shape = circle, draw] {}; 
\node[vertex] (G--1) at (0.0, -1) [shape = circle, draw] {}; 
\node[vertex] (G-1) at (0.0, 1) [shape = circle, draw] {}; 
\draw[] (G-2) .. controls +(0, -1) and +(0, 1) .. (G--2); 
\end{tikzpicture}
\end{multline*}
 and we may again verify the desired idempotency. We may also apply the diagram basis expansions given above to verify that the 
 desired orthogonality relations such that 
\begin{multline*}
 e_{\big( \varnothing, \varnothing, {\Yvcentermath1 \Yboxdim{6.5pt} {\young(\null)}},
 \varnothing, \varnothing \big), \big( \varnothing, \varnothing, {\Yvcentermath1 \Yboxdim{6.5pt} {\young(\null)}},
 \varnothing, \varnothing \big)} 
 e_{\big( \varnothing, \varnothing, {\Yvcentermath1 \Yboxdim{6.5pt} {\young(\null)}},
 \varnothing, {\Yvcentermath1 \Yboxdim{6.5pt} {\young(\null)}} 
 \big), \big( \varnothing, \varnothing, {\Yvcentermath1 \Yboxdim{6.5pt} {\young(\null)}},
 \varnothing, {\Yvcentermath1 \Yboxdim{6.5pt} {\young(\null)}} \big)} 
 = \\
 e_{\big( \varnothing, \varnothing, {\Yvcentermath1 \Yboxdim{6.5pt} {\young(\null)}},
 \varnothing, {\Yvcentermath1 \Yboxdim{6.5pt} {\young(\null)}} 
 \big), \big( \varnothing, \varnothing, {\Yvcentermath1 \Yboxdim{6.5pt} {\young(\null)}},
 \varnothing, {\Yvcentermath1 \Yboxdim{6.5pt} {\young(\null)}} \big)} 
 e_{\big( \varnothing, \varnothing, {\Yvcentermath1 \Yboxdim{6.5pt} {\young(\null)}},
 \varnothing, \varnothing \big), \big( \varnothing, \varnothing, {\Yvcentermath1 \Yboxdim{6.5pt} {\young(\null)}},
 \varnothing, \varnothing \big)} = 0 
\end{multline*}
 hold. 
\end{example}

\section{Partition algebras as monoid algebras}
 According to Wilcox \cite{Wilcox2007} (cf.\ \cite{Clark1967}), for a semigroup $S$ and a commutative ring $R$ with unity, a 
 \emph{twisting} is a mapping $$ \alpha\colon S \times S \to R $$ such that $$ \alpha(x, y) \alpha(xy, z) = \alpha(x, yz) \alpha(y, z) $$ 
 for all $x, y, z \in S$, and 
 the \emph{twisted semigroup algebra} $R^{\alpha}[S]$ of $S$ over $R$ with a twisting $\alpha$ is the 
 $R$-algebra with $S$ as an $R$-basis and with a multiplicative operation $\cdot$ such that 
\begin{equation}\label{cdotalpha}
 x \cdot y = \alpha(x, y) (x, y) 
\end{equation}
 for $x, y \in S$, with \eqref{cdotalpha} extended linearly. Wilcox showed that $\mathbb{C}A_k(n)$ has the structure of a twisted 
 semigroup algebra, with the twisting being given by the power of $n$ in \eqref{pimultdefinition}. This leads us to consider how the 
 twisted semigroup algebra structure on $\mathbb{C}A_k(n)$ could be modified to provide a basis giving $\mathbb{C}A_k(n)$ a 
 semigroup algebra structure, or, better yet, a monoid algebra structure. It appears that this has not previously been considered, with 
 regard to extant literature related to diagram algebras and to Wilcox's work on twisted semigroup algebras 
 \cite{DolinkaEastEvangelouFitzGeraldHamHydeLoughlin2015,East2011,GuoXi2009,WangKoenig2008}. 

 For $n \in \mathbb{C} \setminus \{ 0, 1, \ldots, 2 k - 2 \}$, since $\mathbb{C}A_k(n)$ can be expressed as a direct sum of full matrix 
 algebras, this raises the following question. When does a finite direct sum of full matrix algebras admit the structure a group algebra, a 
 monoid algebra, or a semigroup algebra? For the case of semigroup algebras over algebraically closed fields, a necessary and sufficient 
 condition is given in the usual text on the algebraic theory of semigroups \cite[p.\ 167]{CliffordPreston1961}, via the following result 
 attributed to Hewitt and Zuckerman \cite{HewittZuckerman1955}. 

\begin{theorem}
 (Hewitt $\&$ Zuckerman, 1955) Let $A$ denote a semisimple algebra over a field $F$. If $A \cong FS$ for a finite semigroup $S$, then 
 one of the simple components of $A$ is of order $1$ over $F$. Moreover, the converse holds if $F$ is algebraically 
 closed \cite{HewittZuckerman1955}. 
\end{theorem}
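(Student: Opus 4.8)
The plan is to treat the two implications separately, since they rest on different ideas. For the necessity direction, the key observation is that every semigroup algebra carries a canonical one-dimensional representation. Writing $e_s$ for the basis element of $FS$ attached to $s \in S$, I would define the augmentation $w \colon FS \to F$ by $w(e_s) = 1$ for all $s$ and extend linearly; computing on products of basis elements gives $w(e_s e_t) = w(e_{st}) = 1 = w(e_s) w(e_t)$, and bilinearity upgrades this to the statement that $w$ is an algebra homomorphism. Since $w(e_s) = 1 \neq 0$, the map $w$ is a nonzero, hence surjective, homomorphism onto $F$, i.e.\ a one-dimensional representation of $A \cong FS$. Decomposing $A = \bigoplus_i A_i$ into its simple Wedderburn components, I would argue that $w$ vanishes on all but one component: each restriction $w|_{A_i}$ has a two-sided ideal as kernel, so by simplicity of $A_i$ it is either zero or injective, and an injective homomorphism $A_i \hookrightarrow F$ forces $\dim_F A_i = 1$. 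Since $w$ is nonzero and $F$ is a field, $w(1_A) = 1$, so some restriction $w|_{A_{i_0}}$ is nonzero and therefore injective, whence $A_{i_0} \cong F$, a component of order $1$. This direction uses no hypothesis on $F$.

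For the converse I would first invoke the Wedderburn--Artin theorem: because $F$ is algebraically closed, the only finite-dimensional division algebra over $F$ is $F$ itself, so $A \cong \bigoplus_{i=1}^{r} M_{n_i}(F)$, and by hypothesis some $n_{i_0} = 1$. The idea is to realize $A$ as a semigroup algebra built from matrix units. For each $i$, introduce a set of symbols $B_{n_i} = \{ E^{(i)}_{jk} : 1 \le j,k \le n_i \}$, and form the finite semigroup $S$ consisting of a zero element $\theta$ together with the blocks $B_{n_i}$ for $i \neq i_0$, with multiplication given by $E^{(i)}_{jk} E^{(i)}_{j'k'} = E^{(i)}_{jk'}$ when $k = j'$, and with every other product (those with $k \neq j'$, those crossing two distinct blocks, and those involving $\theta$) set equal to $\theta$. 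Associativity is verified blockwise, and $S$ is the $0$-disjoint union of the multiplicative semigroups of matrix units of the nontrivial components.

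The crux is to see that the distinguished order-$1$ component is exactly what allows the semigroup zero to be absorbed. In $FS$ the element $e_\theta$ satisfies $e_\theta e_s = e_s e_\theta = e_\theta$ for every $s \in S$, so $e_\theta$ is a central idempotent with $e_\theta \cdot FS = F e_\theta \cong F$. Hence $FS \cong F e_\theta \oplus (1 - e_\theta) FS$, where the summand $(1 - e_\theta) FS$ is identified with the contracted semigroup algebra $F_0[S] = FS / F e_\theta$, whose basis is $\bigsqcup_{i \neq i_0} B_{n_i}$ carrying the genuine matrix-unit multiplication (the symbol $\theta$ now mapped to the additive zero). Reading off this multiplication block by block gives $F_0[S] \cong \bigoplus_{i \neq i_0} M_{n_i}(F)$, and combining with the copy of $F \cong M_{n_{i_0}}(F)$ contributed by $e_\theta$ yields $FS \cong \bigoplus_{i} M_{n_i}(F) \cong A$.

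The main obstacle I anticipate is the bookkeeping in this second direction rather than any deep difficulty: one must check carefully that $e_\theta$ is central and idempotent, that passing to $(1 - e_\theta)FS$ genuinely reproduces the contracted algebra and not merely a vector-space quotient, and that each matrix-unit block contracts to $M_{n_i}(F)$ on the nose. Conceptually the delicate point is the correct deployment of the hypothesis: a single order-$1$ component is consumed to play the role of the semigroup zero $\theta$, while any additional order-$1$ components are realized harmlessly as idempotent singleton blocks $B_1$. It is precisely the impossibility of absorbing a semigroup zero in the absence of such a component that makes the order-$1$ condition necessary, which is what the augmentation argument of the first paragraph detects.
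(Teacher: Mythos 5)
Your argument is correct in both directions, but there is nothing in the paper to compare it against: the paper states this result as a quoted classical theorem, with a citation to Hewitt--Zuckerman and to Clifford--Preston, and never proves it (it only invokes it to conclude that a semisimple $\mathbb{C}A_k(n)$ is a semigroup algebra). What you have reconstructed is essentially the standard proof from those sources. The necessity direction via the augmentation homomorphism $w(e_s)=1$ is exactly right: the kernel of $w$ restricted to a Wedderburn block is a two-sided ideal of that block, so $w$ annihilates every block of dimension greater than one, and since $w(1_A)=1$ some block must be one-dimensional (note that the existence of $1_A$ is supplied by the semisimplicity hypothesis). The converse via the $0$-disjoint union of matrix-unit semigroups, with the distinguished one-dimensional component absorbed as $Fe_\theta$ for the adjoined zero, is the classical construction, and your diagnosis of why the order-$1$ component is both necessary and sufficient is the right conceptual summary. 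One small point of hygiene: you write $FS\cong Fe_\theta\oplus(1-e_\theta)FS$ before having exhibited an identity in $FS$. Either produce it explicitly (it is $\sum_{i\neq i_0}\sum_j e_{E^{(i)}_{jj}}+(1-N)e_\theta$ with $N=\sum_{i\neq i_0}n_i$), or avoid the issue by using the identity-free Peirce decomposition for the central idempotent $e_\theta$, namely $FS=e_\theta FS\oplus J$ with $J=\{a-e_\theta a: a\in FS\}=\{a: e_\theta a=0\}$, which has basis $\{e_s-e_\theta : s\neq\theta\}$ and multiplication $(e_s-e_\theta)(e_t-e_\theta)=e_{st}-e_\theta$, i.e.\ precisely the contracted matrix-unit multiplication you need.
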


 Since partition algebras have, to the best of our knowledge, not previously been considered 
 as (non-twisted) semigroup algebras, we highlight the following consequence of the 
 Hewitt--Zuckerman theorem. 

\begin{theorem}
 If $\mathbb{C}A_k(n)$ is semisimple, then $\mathbb{C}A_k(n)$ has the structure of a semigroup algebra. 
\end{theorem}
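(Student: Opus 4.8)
The plan is to invoke the converse direction of the Hewitt--Zuckerman theorem, which applies precisely because the ground field $\mathbb{C}$ is algebraically closed. Under that theorem, the only thing that has to be established is that $\mathbb{C}A_k(n)$ possesses a simple component of order $1$ over $\mathbb{C}$; everything else is then automatic. So I would first record that, since $n$ is chosen so that $\mathbb{C}A_k(n)$ is semisimple, this algebra is isomorphic to a direct sum $\bigoplus_{\lambda \in \hat{A}_k} M_{d_\lambda}(\mathbb{C})$ of full matrix algebras indexed by $\hat{A}_k$, where, by the representation-theoretic fact recalled above, $d_\lambda = |\hat{A}_k^\lambda|$ is the number of order-$k$ vacillating tableaux ending at $\lambda$. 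With this decomposition in hand, a simple component of order $1$ over $\mathbb{C}$ is the same thing as a partition $\lambda \in \hat{A}_k$ with $d_\lambda = 1$, since $M_1(\mathbb{C}) \cong \mathbb{C}$.

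The heart of the argument is then to exhibit such a $\lambda$, and I would take $\lambda = (k)$, the one-row partition of $k$. I claim $d_{(k)} = 1$. A vacillating tableau of order $k$ is a path in $\hat{A}$ from $\varnothing$ at level $0$ to some partition at level $k$, and along such a path the size of the partition can change by at most $+1$ as one passes from an integer level to the next: the intervening half-step either removes a box or fixes the partition, and the following half-step either adds a box or fixes it. Consequently a path terminating at a partition of size $k$ must increase in size by exactly $1$ at each of the $k$ integer steps, which forces the partition to be left unchanged at every half-integer level and to gain a single box at every integer level. Requiring the terminal shape to be the single row $(k)$ then forces each added box to extend that row, so the entire path, hence the vacillating tableau, is uniquely determined. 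This gives $d_{(k)} = 1$, so the block of $\mathbb{C}A_k(n)$ indexed by $(k)$ is $M_1(\mathbb{C}) \cong \mathbb{C}$, a simple component of order $1$.

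Finally, since $\mathbb{C}$ is algebraically closed and $\mathbb{C}A_k(n)$ is a semisimple $\mathbb{C}$-algebra with a simple component of order $1$, the converse part of the Hewitt--Zuckerman theorem produces a finite semigroup $S$ with $\mathbb{C}A_k(n) \cong \mathbb{C}S$, which is exactly the asserted semigroup algebra structure. The only real work is the combinatorial claim $d_{(k)} = 1$; I expect this to be the single substantive step, and it is most cleanly handled by the size-monotonicity observation above (the same reasoning in fact shows $d_\lambda = f^\lambda$ for every $\lambda \vdash k$, so that the order-$1$ components are precisely those indexed by $(k)$ and $(1^k)$, though a single such component already suffices). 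I would remark that this theorem only asserts the \emph{existence} of a semigroup algebra structure; producing an explicit, diagrammatically indexed monoid basis is the separate and harder task carried out in the remainder of the paper.
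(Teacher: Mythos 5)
Your proposal is correct and takes essentially the same approach as the paper: both invoke the converse (algebraically closed) direction of the Hewitt--Zuckerman theorem and exhibit the order-$1$ simple component indexed by $(k)$, whose unique vacillating tableau is $(\varnothing, \varnothing, (1), (1), \ldots, (k-1), (k-1), (k))$. You simply spell out the size-monotonicity argument for $d_{(k)} = 1$ that the paper leaves implicit.
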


\begin{proof}
 This follows from the Hewitt--Zuckerman theorem, since there is at least one simple component of $\mathbb{C}A_k(n)$ of order $1$ over 
 the algebraically closed field $\mathbb{C}$, and this includes the $1$-dimensional component in the matrix algebra decomposition of 
 $\mathbb{C}A_k(n)$ corresponding to the vacillating tableau $ (\varnothing, \varnothing, (1), (1), \ldots, (k-1), (k-1), (k))$. 
\end{proof}

 To construct a basis of $\mathbb{C}A_k(n)$ that gives $\mathbb{C}A_k(n)$ the structure of a monoid algebra, we adapt an approach 
 we recent employed \cite{Campbellunpublished} in an unrelated context concerning a noncommutative Schur-like basis, and we invite 
 the interested reader to make the proper comparison. Informally, since $\mathbb{C}A_k(n)$ has two inequivalent $1$-dimensional 
 components in the decomposition of $\mathbb{C}A_k(n)$ as a direct sum of full matrix algebras, this can be thought of as allowing 
 for the application of a variant of the Hewitt--Zuckerman theorem in such a way so as to produce a monoid algebra, as opposed to a 
 semigroup algebra. 

\begin{theorem}\label{thmEbasis}
 If $\mathbb{C}A_k(n)$ is semisimple, then there exists a basis $M_{k} = M$ of $\mathbb{C}A_k(n)$ that has the structure of a monoid 
 under the product operation on $\mathbb{C}A_k(n)$. 
\end{theorem}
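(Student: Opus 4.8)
The plan is to exploit the Young--Halverson--Ram matrix unit basis $\{ e_{v_1, v_2} \}$ of $\mathbb{C}A_k(n)$ directly, which is available since $\mathbb{C}A_k(n)$ is semisimple and satisfies the matrix unit multiplication rules. First I would record the decomposition of $\mathbb{C}A_k(n)$ into full matrix algebras indexed by $\hat{A}_k$, where the block attached to $\lambda \in \hat{A}_k$ has size equal to $|\hat{A}_k^{\lambda}|$, the number of order-$k$ vacillating tableaux ending at $\lambda$. For $\lambda \vdash k$ one must force an added box at every integer step and a stay at every half-integer step, so $|\hat{A}_k^{\lambda}| = f^{\lambda}$; hence the blocks attached to $(k)$ and to $(1^k)$ are each one-dimensional, and they are distinct once $k \geq 2$ (for $k = 1$ one uses $\varnothing$ and $(1)$ instead). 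Write $z_1$ and $z_2$ for the two corresponding primitive central idempotent matrix units, each spanning its one-dimensional block.

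Next I would build the candidate basis. Set $z := z_1$ and let $\iota$ denote the identity of $\mathbb{C}A_k(n)$, namely the sum of all diagonal matrix units. For every matrix unit $e_{v_1, v_2}$ lying in a block other than the two chosen one-dimensional blocks, define $\hat{e}_{v_1, v_2} := e_{v_1, v_2} + z_1$, and put
\[ M := \{ \iota, z \} \cup \{ \hat{e}_{v_1, v_2} \}. \]
The point of omitting exactly the two one-dimensional blocks is the dimension count: there remain $\dim \mathbb{C}A_k(n) - 2$ matrix units, so $|M| = (\dim \mathbb{C}A_k(n) - 2) + 2 = \dim \mathbb{C}A_k(n)$, precisely the size of a basis.

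Then I would verify that $M$ is a basis that is closed and forms a monoid. Linear independence follows because the transition matrix between $M$ and the matrix unit basis is invertible: one recovers $z_1$ from $z$, each remaining matrix unit from $e_{v_1, v_2} = \hat{e}_{v_1, v_2} - z$, and finally $z_2$ from $\iota$ after subtracting $z_1$ and the remaining diagonal units. Closure and the monoid axioms reduce to the matrix unit rules together with the orthogonality relations $z_1 e_{v_1, v_2} = e_{v_1, v_2} z_1 = 0$ for $e_{v_1, v_2}$ outside the block of $z_1$ and the idempotency $z_1^2 = z_1$. These give $\hat{e}_{v_1, v_2} \hat{e}_{w_1, w_2} = \hat{e}_{v_1, w_2}$ when $v_2 = w_1$ (same block) and $= z$ in every other case, so that each product which would have vanished is redirected to the genuine element $z$; they likewise give $z \, m = m \, z = z$ and $\iota \, m = m \, \iota = m$ for all $m \in M$. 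Associativity is inherited from $\mathbb{C}A_k(n)$, so $M$ is a monoid with identity $\iota$ and absorbing element $z$, and $\mathbb{C}A_k(n) = \mathbb{C}M$.

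The conceptual crux, and the step I expect to require the most care, is the bookkeeping that separates the monoid case from the semigroup case of Hewitt and Zuckerman. With a single one-dimensional component one can only absorb vanishing products into an adjoined semigroup zero, yielding a Brandt-type semigroup that lacks a two-sided identity; the presence of a \emph{second} inequivalent one-dimensional component is exactly what frees one basis slot, allowing the true algebra identity $\iota$ to be inserted as a monoid identity without overshooting the dimension. The routine but essential checks are that products of elements of $M$ never reproduce the excluded idempotents $z_1$ or $z_2$ (they cannot, since a nonzero product remains in the block of its factors, and the only matrix units in the omitted blocks are $z_1$ and $z_2$ themselves, which are absent from $M$) and that the uniform shift by $z_1$ is compatible with every product.
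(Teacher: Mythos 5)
Your proposal is correct and takes essentially the same approach as the paper: it works with the Halverson--Ram matrix units, identifies the two one-dimensional blocks indexed by $(k)$ and $(1^k)$, designates one of their idempotents as an absorbing element, shifts every remaining matrix unit by that idempotent, and installs the algebra identity in the freed basis slot, verifying closure via the matrix unit relations. The only (immaterial) difference from the paper is which of the two one-dimensional blocks is chosen as the absorbing element versus the one whose slot is replaced by the identity $\sum_{R} e_{R,R}$.
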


\begin{proof}
 We let $\{ e_{P, Q} \}_{P, Q}$ denote the basis of $\mathbb{C}A_k(n)$ consisting of Young--Halverson--Ram matrix units in $ 
 \mathbb{C}A_k(n)$, so that $P$ and $Q$ correspond to vacillating tableaux ending at level $k$ in $\hat{A}$, letting it be 
 understood that $e_{P, Q}$ is only defined if $P$ and $Q$ end with the same partition. For such vacillating tableaux $P$ and 
 $Q$, we set 
\[ \textsf{m}_{P, Q} 
 = \begin{cases} 
 e_{(\varnothing, \ldots, (1^k)), (\varnothing, \ldots, (1^k))} 
 & \text{if $P = Q$ and $P$ ends with $(1^{k})$,} \\ 
 \sum_{R} e_{R, R} & \text{if $P = Q$ and $P$ ends with $(k)$,} \\
 e_{(\varnothing, \ldots, (1^k)), (\varnothing, \ldots, (1^k))} 
 + e_{P, Q} & \text{otherwise,}
 \end{cases} 
\]
 letting it be understood that $\sum_{R} e_{R, R}$ denotes the sum over all idempotent elements in $\{ e_{P, 
 Q} \}_{P, Q}$, and letting it be understood that $(\varnothing, \ldots, (1^{k}))$ denotes the unique vacillating tableaux ending at level 
 $k$ and ending with $(1^{k})$. Let pairs of vacillating tableaux ending on level $k$ and ending with the same partition shape be 
 linearly ordered so that $((\varnothing$, $ \ldots$, $ (1^{k}))$, $ (\varnothing$, $ \ldots$, $ (1^{k})))$ is the first such pair followed 
 by $$ \big( (\varnothing, \varnothing, (1), (1), \ldots, (k-1), (k)), \ (\varnothing, \varnothing, (1), (1), \ldots, (k-1), (k)) \big). $$ 
 According to this ordering, by taking the transition matrix corresponding to the expansion of $\textsf{m}_{P, Q}$-expressions into the 
 Young--Halverson--Ram basis, this transition matrix has ones along the main diagonal and ones along the first column and certain 
 entries in the second row equal to one and zeroes everywhere else (cf.\ \cite{Campbellunpublished}). This transition matrix is 
 necessarily of determinant $1$, so that $M_k = M = \{ \textsf{m}_{P, Q} \}_{P, Q}$ 
 is a basis of $\mathbb{C}A_k(n)$. Writing $(\varnothing, \ldots, (k))$ in place of the unique 
 vacillating tableau ending at level $k$ and ending with $(k)$, we find that $$ \textsf{m}_{(\varnothing, \ldots, (k)), (\varnothing, \ldots, 
 (k))} \textsf{m}_{P, Q} = \textsf{m}_{P, Q} \textsf{m}_{(\varnothing, \ldots, (k)), (\varnothing, \ldots, (k))} = \textsf{m}_{P, Q}, $$ 
 since $ \textsf{m}_{(\varnothing, \ldots, (k)), (\varnothing, \ldots, (k))} $ is the sum of the idempotent matrix units in $\{ e_{P, 
 Q} \}_{P, Q}$. Now, let $P_{1}$, $P_{2}$, $P_{3}$, and $P_{4}$ denote vacillating tableaux ending at level $k$ such that the final entry 
 of $P_{i}$ is not $(k)$ for all $i \in \{ 1, 2, 3, 4 \}$. We may then use the matrix unit multiplication rules for the $e$-basis to 
 obtain that $$ \textsf{m}_{P_{1}, P_{2}} \textsf{m}_{P_{3}, P_4} = \textsf{m}_{ \left( \varnothing, \ldots, (1^{k}) \right), \left( \varnothing, 
 \ldots, (1^{k}) \right) } $$ if $P_{2} \neq P_{3}$, with $$ \textsf{m}_{P_{1}, P_{2}} \textsf{m}_{P_{3}, P_4} = \textsf{m}_{P_{1}, P_{4}} $$ 
 otherwise. We thus have that $M$ is a basis of $\mathbb{C}A_k(n)$ that is closed under the product operation on 
 $\mathbb{C}A_k(n)$, with the structure of a monoid under this operation, with the unique identity element being given by the 
 $P = Q = (\varnothing, \ldots, (k))$ case (and with associativity inherited from $\mathbb{C}A_k(n)$). 
\end{proof}

 This gives us a solution to the first part of the main research problem given in Section \ref{sectionIntro}. We now turn to the problem of 
 constructing an explicit, combinatorial rule, formulated in terms of partition diagrams, for multiplying elements of our new basis. 

\subsection{Monoid bases of partition algebras}
 Let it be understood that we are working with a semisimple partition algebra $\mathbb{C}A_k(n)$, and that the $n$-parameter involved 
 in the following definitions is the same as the $n$-parameter involved in the given partition algebra. For a given vacillating tableau $P 
 = (P_{1}, P_{2}, \ldots, P_{\ell(P)})$, we map this vacillating tableau to the tuple $$ \left( \left( n - | P_{i} | - \frac{1 + (-1)^{i}}{2} \right) 
 \cdot P_i : i \in \{ 1, 2, \ldots, \ell(P) \} \right), $$ where $\cdot$ denotes the operation of concatenation of tuples. Let $\text{HRtoBH}$ 
 denote this mapping. 

\begin{example}
 We find that $$ \text{{HRtoBH}}(\varnothing, \varnothing, \varnothing, \varnothing, \varnothing) = ((n), (n-1), (n), (n-1), (n)) $$ and 
 that $$ \text{{HRtoBH}}(\varnothing, \varnothing, {\Yvcentermath1 \Yboxdim{6.5pt} {\young(\null)}}, \varnothing, \varnothing) = ((n), 
 (n - 1), (n, 1), (n - 1), (n)). $$ 
\end{example}

 A \emph{Benkart--Halverson vacillating tableau} may then be defined as a tuple of the form $\text{{HRtoBH}}(P)$ for a vacillating 
 tableau $P$ in the sense of this term previously given and given by Halverson and Ram \cite{HalversonRam2005}. For the sake of 
 clarity, we may refer to these original vacillating tableaux as \emph{Halverson--Ram vacillating tableaux}. We also let $\text{{BHtoHR}}$ 
 denote the inverse of $\text{{HRtoBH}}$. 

 To be consistent with the notation and terminology conventions given by Benkart and Halverson \cite{BenkartHalverson2019trends}, we 
 adopt the following definition. We henceforth adopt the convention whereby of ordered elements are ordered according to the last 
 letter order, so that $s_{1} < s_{2}$ if $\text{max}(s_1) < \text{max}(s_2)$. 

\begin{definition}
 Let $\lambda \vdash n$ and write $\lambda^{\#} = \big( \lambda_{2}, \lambda_{3}, \ldots, \lambda_{\ell(\lambda)} \big)$. Let $t$ be an 
 integer such that $|\lambda^{\#}| \leq t \leq n$. A \emph{set-partition tableau $\mathcal{T}$ of shape $\lambda$ and content 
 $\{ 0^{n-t}, 1, 2 , \ldots, k \}$} is a labeling of the cells of $\lambda$ so that 

\begin{enumerate}

\item The leftmost $n - t$ cells in the first row of $\lambda$ are labeled with $\varnothing$; 

\item The cells in the skew shape $\lambda / (n-t)$ are labeled with distinct and pairwise disjoint subsets of $\{ 1, 2, \ldots, k \}$ in such a 
 way so that the union of the labels is $\{ 1, 2, \ldots, k \}$ and no cell inthe given skew shape is unlabeled; 

\item The rows and columns are strictly increasing. 

\end{enumerate}

\end{definition}

\begin{example}
 The set-partition tableaux of shape $\lambda \vdash 4$ and content $\{ 0^{4-t}$, $1$, $ 2 \}$ are as below, adopting the convention 
 whereby $\varnothing$-labeled cells may be denoted as unlabeled cells, and adopting the convention whereby set-valued labels 
 may be denoted without brackets or commas. 
\begin{equation*}
 \Yvcentermath1 \Yboxdim{16pt} \young(\null{\null}12) \ \ \ \ \ \ \ 
 \Yvcentermath1 \Yboxdim{16pt} \young(\null{\null}\null\twelve) \ \ \ \ \ \ \ 
 \Yvcentermath1 \Yboxdim{16pt} \young(1,\null{\null}2) \ \ \ \ \ \ \ 
 \Yvcentermath1 \Yboxdim{16pt} \young(2,\null{\null}1) 
\end{equation*}
\begin{equation*}
 \Yvcentermath1 \Yboxdim{16pt} \young(\twelve,\null{\null}\null) \ \ \ \ \ \ \ 
 \Yvcentermath1 \Yboxdim{16pt} \young(2,1,{\null}\null) \ \ \ \ \ \ \ 
 \Yvcentermath1 \Yboxdim{16pt} \young(12,{\null}\null) 
\end{equation*}
\end{example}

 Benkart and Halverson \cite{BenkartHalverson2019trends} introduced a bijection $\text{BH}_{k} = \text{BH}$ from set-partition 
 tableaux of content $\{ 0^{\eta-t}, 1, 2 ,\ldots, k \}$ for fixed $\eta \geq 2k$ that we take as $\eta = 2k$ for convenience to 
 Benkart--Halverson vacillating tableaux of order $k$. This is to be required in our construction of a new basis for $\mathbb{C}A_k(n)$. 

 The RSK correspondence is often seen as a hallmark in the history of combinatorics. In its most familiar form, it provides a bijection 
 between the set of order-$n$ permutations and the set of pairs of standard Young tableaux of the same shape and with $n$ cells. 
 For the purposes of this paper, we require the below Theorem due (in an equivalent form) to Colmenarejo et al.\ 
 \cite{ColmenarejoOrellanaSaliolaSchillingZabrocki2020} and related to an extension of RSK correspondence for generalized 
 permutations. For background on the usual formulations of the RSK correspondence, we refer to the appropriate works by 
 Robinson \cite{Robinson1938}, Schensted \cite{Schensted1961}, and Knuth \cite{Knuth1970}. The interest in the construction of a 
 basis $M$ for $\mathbb{C}A_k(n)$ that gives $\mathbb{C}A_k(n)$ the structure of a monoid algebra with a product rule related, as 
 below, to combinatorial and representation-theoretic properties of set-partition tableaux is motivated by past applications of 
 set-partition tableaux within the field of combinatorial representation theory 
 \cite{BenkartHalverson2019Lond,BenkartHalverson2019trends,BenkartHalversonHarman2017,
Halverson2020,HalversonJacobson2020,OrellanaZabrocki2021}. 

\begin{theorem}\label{theoremColmenarejo}
 (Colmenarejo et al., 2020) For $n \geq 2k$, there is a bijection between the set of set-partitions of $\{ 1, 2, \ldots, k, 1', 2', \ldots, 
 k' \}$ and the set of pairs $(\mathcal{T}_{1}, \mathcal{T}_{2})$ of set-partition tableaux of the same shape $\lambda \vdash 
 n$ \cite{ColmenarejoOrellanaSaliolaSchillingZabrocki2020}. 
\end{theorem}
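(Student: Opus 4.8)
The plan is to realize the desired correspondence as a composition of bijections that are (almost) already in place, rather than to reconstruct the insertion algorithm of Colmenarejo et al.\ from scratch. First I would invoke the \emph{vacillating-tableau} form of the partition-algebra RSK correspondence: for $n \geq 2k$ there is a bijection between order-$k$ partition diagrams (equivalently, set-partitions of $\{1,2,\ldots,k,1',2',\ldots,k'\}$) and pairs $(P,Q)$ of order-$k$ Halverson--Ram vacillating tableaux ending with a common partition $\mu$ satisfying $|\mu| \leq k$. The existence of such a bijection is forced by the representation theory recalled above: since $\{ e^{\mu}_{P,Q} \}_{\mu, P, Q}$ is a basis of $\mathbb{C}A_k(n)$ indexed by pairs $P, Q \in \hat{A}_k^{\mu}$ while the diagram basis is indexed by partition diagrams, one has
\[
 \#\{\text{order-}k \text{ partition diagrams}\}
 = \dim \mathbb{C}A_k(n)
 = \sum_{\mu} \bigl| \hat{A}_k^{\mu} \bigr|^{2},
\]
and Colmenarejo et al.\ upgrade this equinumerosity to an explicit shape-preserving insertion $\pi \mapsto (P,Q)$.

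The second ingredient is the Benkart--Halverson bijection $\text{BH}_k$, together with the relabelling $\text{HRtoBH}$: applying $\text{BH}_k^{-1} \circ \text{HRtoBH}$ carries a single order-$k$ Halverson--Ram vacillating tableau ending at $\mu$ to a set-partition tableau. Here I would track the shape carefully. A set-partition tableau attached to end-shape $\mu$ has shape $\lambda$ with $\lambda^{\#} = \mu$, i.e.\ $\lambda = (n - |\mu|, \mu_1, \mu_2, \ldots)$. The hypothesis $n \geq 2k$ is exactly what guarantees that this is a genuine partition of $n$: since $\mu_1 \leq |\mu| \leq k$, we have $n - |\mu| \geq 2k - k = k \geq \mu_1$, so the first row is long enough and $\lambda \vdash n$. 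Applying $\text{BH}_k^{-1} \circ \text{HRtoBH}$ separately to $P$ and to $Q$ then yields set-partition tableaux $\mathcal{T}_1$ and $\mathcal{T}_2$; because $P$ and $Q$ share the end-shape $\mu$, the tableaux $\mathcal{T}_1$ and $\mathcal{T}_2$ automatically share the common shape $\lambda$.

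Finally I would assemble the composite $\pi \mapsto (\mathcal{T}_1, \mathcal{T}_2)$ and argue that it is a bijection onto the set of pairs of set-partition tableaux of a common shape $\lambda \vdash n$: each of the two stages is a bijection, the shape-matching is preserved in both directions, and the inverse is obtained by running $\text{BHtoHR} \circ \text{BH}_k$ on each tableau to recover $(P,Q)$ and then inverting the vacillating-tableau RSK. The step I expect to require the most care is the shape bookkeeping that reconciles the normalization $\eta = 2k$ used in the statement of $\text{BH}_k$ with the general shape $\lambda \vdash n$ appearing in the theorem: passing from $\eta = 2k$ to an arbitrary $n \geq 2k$ only lengthens the first row by $n - 2k$ additional $\varnothing$-cells and leaves the essential data (the end-shape $\mu = \lambda^{\#}$ and the placement of the blocks of $\{1,\ldots,k\}$) untouched, so the number of set-partition tableaux of shape $\lambda$ with $\lambda^{\#} = \mu$ is independent of $n$. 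Verifying that this shift is harmless, and that it interacts correctly with the strictly increasing row and column conditions along the lengthened first row, is the main technical obstacle; everything else follows formally from composing the known bijections.
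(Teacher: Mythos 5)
The paper does not actually prove Theorem \ref{theoremColmenarejo}: it is imported by citation, and what follows the statement is a description of the explicit insertion of Colmenarejo et al.\ (order the propagating blocks in last letter order, form a generalized permutation, apply RSK to get $(\mathcal{S}_1(\pi),\mathcal{S}_2(\pi))$, then append the non-propagating blocks as a padded extra row). Your route is genuinely different: you factor the correspondence through vacillating tableaux, getting equinumerosity of partition diagrams with pairs $(P,Q)$ of vacillating tableaux sharing an end shape from the fact that the diagram basis and the Halverson--Ram matrix unit basis span the same algebra, and then applying $\mathrm{BH}_k^{-1}\circ\mathrm{HRtoBH}$ componentwise; your shape bookkeeping ($\lambda=(n-|\mu|)\cdot\mu$, with $n\geq 2k$ and $|\mu|\leq k$ guaranteeing $n-|\mu|\geq\mu_1$) is correct, and taking $\eta=n$ in the Benkart--Halverson bijection disposes of the normalization worry. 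This is a valid proof of the literal statement, since only existence of a bijection is claimed. Two caveats are worth recording. First, your step~1 as written leans on ``Colmenarejo et al.\ upgrade this equinumerosity to an explicit insertion,'' which is uncomfortably close to citing the theorem being proved; you should instead rest that step either on the pure counting argument (which suffices for existence) or on the independent explicit insertion of Halverson--Lewandowski for set partitions and vacillating tableaux, which the paper already cites. Second, your composite bijection is not a priori the same map as the Colmenarejo et al.\ insertion $\pi\mapsto(\mathrm{RSK}_1(\pi),\mathrm{RSK}_2(\pi))$, and the later product rule \eqref{indexedcross}--\eqref{multthmOW} is phrased in terms of that specific map; so your argument establishes the theorem but would not substitute for the explicit construction the rest of the paper depends on. What each approach buys: the direct insertion is purely combinatorial and yields the canonical map, while your factorization is shorter, reuses machinery already set up for the monoid basis (it is essentially the composite used to define $m_\pi$), and makes the shape-matching transparent.
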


 As in the work of Benkart and Halverson \cite{BenkartHalverson2019trends}, we assume familiarity with Schensted row insertion 
 \cite[\S7.11]{Stanley1999}. We also assume familiarity with Knuth's formulation of the RSK algorithm for generalized permutations 
 \cite{Knuth1970}. In this direction, we require the following definition. 

\begin{definition}
 For two ordered alphabets $A$ and $B$, a \emph{generalized permutation} from $A$ to $B$ is a two-row array $ 
 \bigl(\begin{smallmatrix} a_{1} & a_{2} & \cdots & a_{r} \\ b_{1} & b_{2} & \cdots & b_{r} \end{smallmatrix} \bigr) $ such that $a_{1}$, 
 $a_{2}$, $\ldots$, $a_{r} \in A$ and $b_{1}$, $b_{2}$, $\ldots$, $b_{r} \in B$ and $a_{i} \leq_{A} a_{i+1}$ for $i \in \{ 1, 2, \ldots, r-1 \}$ 
 and $a_{i} = a_{i+1} \Longrightarrow b_{i} \leq_{B} b_{i+1}$. 
\end{definition}

 The tableaux on the right-hand side of \eqref{RSKRSK1RSK2} may be constructed according to the following modification of a 
 procedure given by Colmenarejo et al.\ \cite{ColmenarejoOrellanaSaliolaSchillingZabrocki2020}, and letting $\pi$ be written as a 
 set-partition of $\{ 1, 2, \ldots, k \} \cup \{ 1', 2', \ldots, k' \}$. For a propagating block $B$ of $\pi$, we write $B^{+} = B \cap \{ 1, 2, 
 \ldots, k \}$ and $B^{-} = B \cap \{ 1', 2', \ldots, k' \}$. Let $\{ \pi^{j_1}, \pi^{j_2}, \ldots, \pi^{j_{p}} \}$ denote the set of propagating 
 blocks of $\pi$ where the indices $j_{1}$, $j_2$, $\ldots$, $j_p$ are such that $$ \pi^{j_{1}, +} < \pi^{j_{2}, +} < \cdots < \pi^{j_{p}, 
 +} $$ in the last letter order, letting it be understood that $\pi^{j_{i}, +} = \big( \pi^{j_{p}} \big)^{+}$ and that $\pi^{j_{i}, -} = \big( 
 \pi^{j_{p}} \big)^{-}$. We then form a pair $(\mathcal{S}_{1}(\pi), \mathcal{S}_{2}(\pi))$ of tableaux via an application of the RSK 
 procedure to the generalized permutation $$ \left( \begin{matrix} \pi^{j_1, +} & \pi^{j_2, +} & \cdots & \pi^{j_p, +} \\ \pi^{j_1, 
 -} & \pi^{j_2, -} & \cdots & \pi^{j_p, -} \end{matrix} \right), $$ referring to the work of Knuth \cite{Knuth1970} and of 
 Colmenarejo et al.\ \cite{ColmenarejoOrellanaSaliolaSchillingZabrocki2020} for background and details. We then form the pair 
 $ (\text{RSK}_{1}(\pi), \text{RSK}_{2}(\pi))$ from $(\mathcal{S}_{1}(\pi)$, $ \mathcal{S}_{2}(\pi))$ as follows. 

 For a non-propagating block $B$ of $\pi$ that consists of vertices labeled with $\{ 1, 2, \ldots, k \}$ (resp.\ $\{ 1', 2', \ldots, k' \}$), we 
 take the set $\mathscr{S}_{1}(B)$ (resp.\ $\mathscr{S}_{2}(B)$) of labels of vertices in $B$. We then form the family $\{ 
 \mathscr{S}_{1}(B) \}_{B}$ (resp.\ $\{ \mathscr{S}_{2}(B) \}_{B}$) indexed by non-propagating blocks of the specified form, and we form 
 an increasing row $R_{1}$ (resp.\ $R_{2}$) consisting of $| \{ \mathscr{S}_{1}(B) \}_{B} |$ (resp.\ $| \{ \mathscr{S}_{2}(B) \}_{B} |$) cells 
 labeled with the elements in $ \{ \mathscr{S}_{1}(B) \}_{B}$ (resp.\ $\{ \mathscr{S}_{2}(B) \}_{B}$). We then add blank cells to the left of 
 $R_{1}$ (resp.\ $R_{2}$) to form a new row $(R_{1})'$ (resp.\ $(R_{2})'$) in such a way so that $\mathcal{S}_{1}(\pi)$ and $R_{1}$ 
 (resp.\ $\mathcal{S}_{2}(\pi)$ and $R_{1}$) together have $2k$ cells in total. We then form $\text{RSK}_{1}(\pi)$ (resp.\ 
 $\text{RSK}_{2}(\pi)$) as the tableau obtained by placing $\mathcal{S}_{1}(\pi)$ (resp.\ $\mathcal{S}_{2}(\pi)$) on top of $(R_{1})'$ 
 (resp.\ $(R_{2})'$). 

\begin{example}\label{examplebijCOSSZ}
 The bijection given Colmenarejo et al.\ \cite{ColmenarejoOrellanaSaliolaSchillingZabrocki2020} associated with the proof of Theorem 
 \ref{theoremColmenarejo} is illustrated below for the $k = 2$ case, again letting partition diagrams for $\mathbb{C}A_k(n)$ be 
 ordered according to the {\tt SageMath} ordering for partition diagrams. 
\begin{align*}
 \left( \Yvcentermath1 \Yboxdim{16pt} 
 \young(\twelve,\null{\null}\null), \Yvcentermath1 \Yboxdim{16pt} 
 \young(\twelve,\null{\null}\null) \right) & \ \longleftrightarrow \ 
 \begin{tikzpicture}[scale = 0.5,thick, baseline={(0,-1ex/2)}] 
\tikzstyle{vertex} = [shape = circle, minimum size = 7pt, inner sep = 1pt] 
\node[vertex] (G--2) at (1.5, -1) [shape = circle, draw] {}; 
\node[vertex] (G--1) at (0.0, -1) [shape = circle, draw] {}; 
\node[vertex] (G-1) at (0.0, 1) [shape = circle, draw] {}; 
\node[vertex] (G-2) at (1.5, 1) [shape = circle, draw] {}; 
\draw[] (G-1) .. controls +(0.5, -0.5) and +(-0.5, -0.5) .. (G-2); 
\draw[] (G-2) .. controls +(0, -1) and +(0, 1) .. (G--2); 
\draw[] (G--2) .. controls +(-0.5, 0.5) and +(0.5, 0.5) .. (G--1); 
\draw[] (G--1) .. controls +(0, 1) and +(0, -1) .. (G-1); 
\end{tikzpicture} = d_{\pi_{1}} \\ 
 \left( \Yvcentermath1 \Yboxdim{16pt} 
 \young(\twelve,\null{\null}\null), \ 
 \Yvcentermath1 \Yboxdim{16pt} 
 \young(2,\null{\null}1) 
 \right) & \ \longleftrightarrow \ 
 \begin{tikzpicture}[scale = 0.5,thick, baseline={(0,-1ex/2)}] 
 \tikzstyle{vertex} = [shape = circle, minimum size = 7pt, inner sep = 1pt] 
 \node[vertex] (G--2) at (1.5, -1) [shape = circle, draw] {}; 
 \node[vertex] (G-1) at (0.0, 1) [shape = circle, draw] {}; 
\node[vertex] (G-2) at (1.5, 1) [shape = circle, draw] {}; 
\node[vertex] (G--1) at (0.0, -1) [shape = circle, draw] {}; 
\draw[] (G-1) .. controls +(0.5, -0.5) and +(-0.5, -0.5) .. (G-2); 
\draw[] (G-2) .. controls +(0, -1) and +(0, 1) .. (G--2); 
\draw[] (G--2) .. controls +(-0.75, 1) and +(0.75, -1) .. (G-1); 
 \end{tikzpicture} = d_{\pi_{2}} \\ 
 \left( \Yvcentermath1 \Yboxdim{16pt} 
 \young(\twelve,\null{\null}\null), \ 
 \Yvcentermath1 \Yboxdim{16pt} 
 \young(1,\null{\null}2) 
 \right) & \ \longleftrightarrow \ 
\begin{tikzpicture}[scale = 0.5,thick, baseline={(0,-1ex/2)}] 
\tikzstyle{vertex} = [shape = circle, minimum size = 7pt, inner sep = 1pt] 
\node[vertex] (G--2) at (1.5, -1) [shape = circle, draw] {}; 
\node[vertex] (G--1) at (0.0, -1) [shape = circle, draw] {}; 
\node[vertex] (G-1) at (0.0, 1) [shape = circle, draw] {}; 
\node[vertex] (G-2) at (1.5, 1) [shape = circle, draw] {}; 
\draw[] (G-1) .. controls +(0.5, -0.5) and +(-0.5, -0.5) .. (G-2); 
\draw[] (G-2) .. controls +(-0.75, -1) and +(0.75, 1) .. (G--1); 
\draw[] (G--1) .. controls +(0, 1) and +(0, -1) .. (G-1); 
\end{tikzpicture} = d_{\pi_{3}} \\ 
 \left( \Yvcentermath1 \Yboxdim{16pt} 
 \young(\null{\null}\null\twelve), 
 \Yvcentermath1 \Yboxdim{16pt} 
 \young(\null{\null}\null\twelve) \right) 
 & \ \longleftrightarrow \ 
 \begin{tikzpicture}[scale = 0.5,thick, baseline={(0,-1ex/2)}] 
\tikzstyle{vertex} = [shape = circle, minimum size = 7pt, inner sep = 1pt] 
\node[vertex] (G--2) at (1.5, -1) [shape = circle, draw] {}; 
\node[vertex] (G--1) at (0.0, -1) [shape = circle, draw] {}; 
\node[vertex] (G-1) at (0.0, 1) [shape = circle, draw] {}; 
\node[vertex] (G-2) at (1.5, 1) [shape = circle, draw] {}; 
\draw[] (G--2) .. controls +(-0.5, 0.5) and +(0.5, 0.5) .. (G--1); 
\draw[] (G-1) .. controls +(0.5, -0.5) and +(-0.5, -0.5) .. (G-2); 
\end{tikzpicture} = d_{\pi_{4}} \\ 
 \left( \Yvcentermath1 \Yboxdim{16pt} 
 \young(\null{\null}\null\twelve), 
 \Yvcentermath1 \Yboxdim{16pt} 
 \young(\null{\null}12) \right) 
 & \ \longleftrightarrow \ 
 \begin{tikzpicture}[scale = 0.5,thick, baseline={(0,-1ex/2)}] 
\tikzstyle{vertex} = [shape = circle, minimum size = 7pt, inner sep = 1pt] 
\node[vertex] (G--2) at (1.5, -1) [shape = circle, draw] {}; 
\node[vertex] (G--1) at (0.0, -1) [shape = circle, draw] {}; 
\node[vertex] (G-1) at (0.0, 1) [shape = circle, draw] {}; 
\node[vertex] (G-2) at (1.5, 1) [shape = circle, draw] {}; 
\draw[] (G-1) .. controls +(0.5, -0.5) and +(-0.5, -0.5) .. (G-2); 
\end{tikzpicture} = d_{\pi_{5}} \\ 
 \left( \Yvcentermath1 \Yboxdim{16pt} 
 \young(1,\null{\null}2), \ 
 \Yvcentermath1 \Yboxdim{16pt} 
 \young(\twelve,\null{\null}\null) 
 \right) & \ \longleftrightarrow \ 
\begin{tikzpicture}[scale = 0.5,thick, baseline={(0,-1ex/2)}] 
\tikzstyle{vertex} = [shape = circle, minimum size = 7pt, inner sep = 1pt] 
\node[vertex] (G--2) at (1.5, -1) [shape = circle, draw] {}; 
\node[vertex] (G--1) at (0.0, -1) [shape = circle, draw] {}; 
\node[vertex] (G-1) at (0.0, 1) [shape = circle, draw] {}; 
\node[vertex] (G-2) at (1.5, 1) [shape = circle, draw] {}; 
\draw[] (G-1) .. controls +(0.75, -1) and +(-0.75, 1) .. (G--2); 
\draw[] (G--2) .. controls +(-0.5, 0.5) and +(0.5, 0.5) .. (G--1); 
\draw[] (G--1) .. controls +(0, 1) and +(0, -1) .. (G-1); 
\end{tikzpicture} = d_{\pi_{6}} \\ 
 \left( \Yvcentermath1 \Yboxdim{16pt} 
 \young(2,1,\null{\null}), \ 
 \Yvcentermath1 \Yboxdim{16pt} 
 \young(2,1,\null{\null}) 
 \right) & \ \longleftrightarrow \ 
\begin{tikzpicture}[scale = 0.5,thick, baseline={(0,-1ex/2)}] 
\tikzstyle{vertex} = [shape = circle, minimum size = 7pt, inner sep = 1pt] 
\node[vertex] (G--2) at (1.5, -1) [shape = circle, draw] {}; 
\node[vertex] (G-1) at (0.0, 1) [shape = circle, draw] {}; 
\node[vertex] (G--1) at (0.0, -1) [shape = circle, draw] {}; 
\node[vertex] (G-2) at (1.5, 1) [shape = circle, draw] {}; 
\draw[] (G-1) .. controls +(0.75, -1) and +(-0.75, 1) .. (G--2); 
\draw[] (G-2) .. controls +(-0.75, -1) and +(0.75, 1) .. (G--1); 
\end{tikzpicture} = d_{\pi_{7}} \\ 
 \left( \Yvcentermath1 \Yboxdim{16pt} 
 \young(1,\null{\null}2), \ 
 \Yvcentermath1 \Yboxdim{16pt} 
 \young(2,\null{\null}1) 
 \right) & \ \longleftrightarrow \ 
\begin{tikzpicture}[scale = 0.5,thick, baseline={(0,-1ex/2)}] 
\tikzstyle{vertex} = [shape = circle, minimum size = 7pt, inner sep = 1pt] 
\node[vertex] (G--2) at (1.5, -1) [shape = circle, draw] {}; 
\node[vertex] (G-1) at (0.0, 1) [shape = circle, draw] {}; 
\node[vertex] (G--1) at (0.0, -1) [shape = circle, draw] {}; 
\node[vertex] (G-2) at (1.5, 1) [shape = circle, draw] {}; 
\draw[] (G-1) .. controls +(0.75, -1) and +(-0.75, 1) .. (G--2); 
\end{tikzpicture} = d_{\pi_{8}} \\ 
 \left( \Yvcentermath1 \Yboxdim{16pt} 
 \young(12,\null{\null}), \ 
 \Yvcentermath1 \Yboxdim{16pt} 
 \young(12,\null{\null}) 
 \right) & \ \longleftrightarrow \ 
 \begin{tikzpicture}[scale = 0.5,thick, baseline={(0,-1ex/2)}] 
\tikzstyle{vertex} = [shape = circle, minimum size = 7pt, inner sep = 1pt] 
\node[vertex] (G--2) at (1.5, -1) [shape = circle, draw] {}; 
\node[vertex] (G-2) at (1.5, 1) [shape = circle, draw] {}; 
\node[vertex] (G--1) at (0.0, -1) [shape = circle, draw] {}; 
\node[vertex] (G-1) at (0.0, 1) [shape = circle, draw] {}; 
\draw[] (G-2) .. controls +(0, -1) and +(0, 1) .. (G--2); 
\draw[] (G-1) .. controls +(0, -1) and +(0, 1) .. (G--1); 
\end{tikzpicture} = d_{\pi_{9}} \\ 
 \left( \Yvcentermath1 \Yboxdim{16pt} 
 \young(2,\null{\null}1), \ 
 \Yvcentermath1 \Yboxdim{16pt} 
 \young(\twelve,\null{\null}\null) 
 \right) & \ \longleftrightarrow \ 
 \begin{tikzpicture}[scale = 0.5,thick, baseline={(0,-1ex/2)}] 
\tikzstyle{vertex} = [shape = circle, minimum size = 7pt, inner sep = 1pt] 
\node[vertex] (G--2) at (1.5, -1) [shape = circle, draw] {}; 
\node[vertex] (G--1) at (0.0, -1) [shape = circle, draw] {}; 
\node[vertex] (G-2) at (1.5, 1) [shape = circle, draw] {}; 
\node[vertex] (G-1) at (0.0, 1) [shape = circle, draw] {}; 
\draw[] (G-2) .. controls +(0, -1) and +(0, 1) .. (G--2); 
\draw[] (G--2) .. controls +(-0.5, 0.5) and +(0.5, 0.5) .. (G--1); 
\draw[] (G--1) .. controls +(0.75, 1) and +(-0.75, -1) .. (G-2); 
\end{tikzpicture} = d_{\pi_{10}} \\ 
 \left( \Yvcentermath1 \Yboxdim{16pt} 
 \young(2,\null{\null}1), \Yvcentermath1 \Yboxdim{16pt} 
 \young(2,\null{\null}1) \right) & \ \longleftrightarrow \ 
 \begin{tikzpicture}[scale = 0.5,thick, baseline={(0,-1ex/2)}] 
\tikzstyle{vertex} = [shape = circle, minimum size = 7pt, inner sep = 1pt] 
\node[vertex] (G--2) at (1.5, -1) [shape = circle, draw] {}; 
\node[vertex] (G-2) at (1.5, 1) [shape = circle, draw] {}; 
\node[vertex] (G--1) at (0.0, -1) [shape = circle, draw] {}; 
\node[vertex] (G-1) at (0.0, 1) [shape = circle, draw] {}; 
\draw[] (G-2) .. controls +(0, -1) and +(0, 1) .. (G--2); 
\end{tikzpicture} = d_{\pi_{11}} \\ 
 \left( \Yvcentermath1 \Yboxdim{16pt} 
 \young(1,\null{\null}2), \ 
 \Yvcentermath1 \Yboxdim{16pt} 
 \young(1,\null{\null}2) 
 \right) & \ \longleftrightarrow \ 
 \begin{tikzpicture}[scale = 0.5,thick, baseline={(0,-1ex/2)}] 
\tikzstyle{vertex} = [shape = circle, minimum size = 7pt, inner sep = 1pt] 
\node[vertex] (G--2) at (1.5, -1) [shape = circle, draw] {}; 
\node[vertex] (G--1) at (0.0, -1) [shape = circle, draw] {}; 
\node[vertex] (G-1) at (0.0, 1) [shape = circle, draw] {}; 
\node[vertex] (G-2) at (1.5, 1) [shape = circle, draw] {}; 
\draw[] (G-1) .. controls +(0, -1) and +(0, 1) .. (G--1); 
\end{tikzpicture} = d_{\pi_{12}} \\ 
 \left( \Yvcentermath1 \Yboxdim{16pt} 
 \young(2,\null{\null}1), \ 
 \Yvcentermath1 \Yboxdim{16pt} 
 \young(1,\null{\null}2) 
 \right) & \ \longleftrightarrow \ 
\begin{tikzpicture}[scale = 0.5,thick, baseline={(0,-1ex/2)}] 
\tikzstyle{vertex} = [shape = circle, minimum size = 7pt, inner sep = 1pt] 
\node[vertex] (G--2) at (1.5, -1) [shape = circle, draw] {}; 
\node[vertex] (G--1) at (0.0, -1) [shape = circle, draw] {}; 
\node[vertex] (G-2) at (1.5, 1) [shape = circle, draw] {}; 
\node[vertex] (G-1) at (0.0, 1) [shape = circle, draw] {}; 
\draw[] (G-2) .. controls +(-0.75, -1) and +(0.75, 1) .. (G--1); 
\end{tikzpicture} = d_{\pi_{13}} \\ 
 \left( \Yvcentermath1 \Yboxdim{16pt} 
 \young(\null{\null}12), 
 \Yvcentermath1 \Yboxdim{16pt} 
 \young(\null{\null}\null\twelve) \right) 
 & \ \longleftrightarrow \ 
 \begin{tikzpicture}[scale = 0.5,thick, baseline={(0,-1ex/2)}] 
\tikzstyle{vertex} = [shape = circle, minimum size = 7pt, inner sep = 1pt] 
\node[vertex] (G--2) at (1.5, -1) [shape = circle, draw] {}; 
\node[vertex] (G--1) at (0.0, -1) [shape = circle, draw] {}; 
\node[vertex] (G-1) at (0.0, 1) [shape = circle, draw] {}; 
\node[vertex] (G-2) at (1.5, 1) [shape = circle, draw] {}; 
\draw[] (G--2) .. controls +(-0.5, 0.5) and +(0.5, 0.5) .. (G--1); 
\end{tikzpicture} = d_{\pi_{14}} \\ 
 \left( \Yvcentermath1 \Yboxdim{16pt} 
 \young(\null{\null}12), \ 
 \Yvcentermath1 \Yboxdim{16pt} 
 \young(\null{\null}12) \right) 
 & \ \longleftrightarrow \ \begin{tikzpicture}[scale = 0.5,thick, baseline={(0,-1ex/2)}] 
 \tikzstyle{vertex} = [shape = circle, minimum size = 7pt, inner sep = 1pt] 
 \node[vertex] (G--2) at (1.5, -1) [shape = circle, draw] {}; 
 \node[vertex] (G--1) at (0.0, -1) [shape = circle, draw] {}; 
 \node[vertex] (G-1) at (0.0, 1) [shape = circle, draw] {}; 
 \node[vertex] (G-2) at (1.5, 1) [shape = circle, draw] {}; 
 \end{tikzpicture} = d_{\pi_{15}} 
\end{align*}
\end{example}

 We proceed to consider the application of the Benkart--Halverson bijection $\text{BH} = \text{BH}_{k}$ 
 \cite{BenkartHalverson2019trends}, which, as indicated above, for our purposes, is from the set of order-$k$ set-partition tableaux 
 with $\eta = 2k$ cells to the set of order-$k$ Benkart--Halverson tableaux. Borrowing notation from Benkart and Halverson, we write 
 $\textsf{T} \longleftarrow b$ to denote the operation of Schensted row insertion of a box $b$ (together with its label) into the 
 set-partition tableau $\textsf{T}$ (again subject to the specified ordering on sets of ordered sets). 

 Again with reference to the work of Benkart and Halverson \cite{BenkartHalverson2019trends}, we let $\textsf{T}$ be a set-partition 
 tableau of shape $\lambda \vdash 2k$ and content $[0^{\eta-t}, 1, \ldots, k]$, with $|\lambda^{\#}| \leq t \leq \eta$, and we then 
 apply the following procedure recursively to obtain an order-$k$ Benkart--Halverson vacillating tableau $$ \left( [\eta] = 
 \lambda^{(0)}, \lambda^{\left( \frac{1}{2} \right)}, \lambda^{(1)}, \ldots, \lambda^{(k)} = \lambda \right). $$ 

\begin{enumerate}

 \item Set $\lambda^{(k)} = \lambda$ and set $\textsf{T}^{(k)} = \textsf{T}$; 

\item For $j = k, k-1, \ldots, 1$, perform the following steps: 

\begin{enumerate}

 \item Set $\textsf{T}^{\left( j - \frac{1}{2} \right)}$ as the tableau that we obtain 
 from $\textsf{T}^{(j)}$ through the removal of the box $b$ containing $j$. 
 Set $\lambda^{\left( j -\frac{1}{2} \right)}$ as the shape of $\textsf{T}^{\left( j - \frac{1}{2} \right)}$; 

\item Remove the entry $j$ from $b$. If $b$ is empty afterwards, then add $0$ to it; and 

\item Set $\textsf{T}^{(j-1)} = \textsf{T}^{\left( j - \frac{1}{2} \right)} \longleftarrow b$, 
 and let $\lambda^{(j-1)}$ be the shape of $\textsf{T}^{(j-1)}$. 

\end{enumerate}

\end{enumerate}

\begin{remark} 
 The expression $\eta = 2k$ required in the above formulation of the Benkart--Halverson bijection is necessarily an integer, but, for 
 convenience, we may rewrite $\eta$ and $n$ to be consistent with our previous notation for Benkart--Halverson vacillating tableaux, 
 despite $n$ being complex. 
\end{remark}

\begin{example}
 The bijection $ \text{BH}_{2}$ is illustrated by the pairings given below, writing $n = 2k$ to be consistent with the notation 
 for Benkart--Halverson tableaux 
\begin{align*}
 \Yvcentermath1 \Yboxdim{16pt} 
 \young(\null{\null}12) & \ \longleftrightarrow \ 
 \big( (n), (n-1), (n-1, 1), (n-1), (n) \big) \\ 
 \Yvcentermath1 \Yboxdim{16pt} 
 \young(\null{\null}\null\twelve) & \ \longleftrightarrow \ 
 \big( (n), (n-1), (n), (n-1), (n) \big) \\ 
 \Yvcentermath1 \Yboxdim{16pt} 
 \young(1,\null{\null}2) & \ \longleftrightarrow \ 
 \big( (n), (n-1), (n-1, 1), (n-2, 1), (n-1, 1) \big) \\ 
 \Yvcentermath1 \Yboxdim{16pt} 
 \young(2,\null{\null}1) & \ \longleftrightarrow \ 
 \big( (n), (n-1), (n-1, 1), (n-1), (n-1, 1) \big) \\ 
 \Yvcentermath1 \Yboxdim{16pt} 
 \young(\twelve,\null{\null}\null) & \ \longleftrightarrow \ 
 \big( (n), (n-1), (n), (n-1), (n-1, 1) \big) \\ 
 \Yvcentermath1 \Yboxdim{16pt} 
 \young(2,1,{\null}\null) & \ \longleftrightarrow \ 
 \big( (n), (n-1), (n-1, 1), (n-2, 1), (n-2, 1, 1) \big) \\ 
 \Yvcentermath1 \Yboxdim{16pt} 
 \young(12,{\null}\null) & \ \longleftrightarrow \ 
 \big( (n), (n-1), (n-1, 1), (n-2, 1), (n-2, 2) \big) 
\end{align*}
\end{example}

 The {BHtoHR} bijection is given by mapping a Benkart--Halverson 
 vacillating tableau to the Halverson--Ram vacillating tableau $P$ obtained by removing 
 the initiat part of each entry in $P$. 

\begin{example}
 The bijection given by the composition 
 $\text{{BHtoHR}} \circ \text{BH}_{2}$ is illustrated by the pairings given below. 
\begin{align*}
 \Yvcentermath1 \Yboxdim{16pt} 
 \young(\null{\null}12) & \ \longleftrightarrow \ 
 \big( \varnothing, \varnothing, 
 \Yvcentermath1 \Yboxdim{6.5pt} {\young(\null)}, 
 \varnothing, 
 \varnothing \big) \\ 
 \Yvcentermath1 \Yboxdim{16pt} 
 \young(\null{\null}\null\twelve) & \ \longleftrightarrow \ 
 \big( \varnothing, \varnothing, 
 \varnothing, 
 \varnothing, 
 \varnothing \big) \\ 
 \Yvcentermath1 \Yboxdim{16pt} 
 \young(1,\null{\null}2) & \ \longleftrightarrow \ 
 \big( \varnothing, \varnothing, \Yvcentermath1 \Yboxdim{6.5pt} {\young(\null)}, 
 \Yvcentermath1 \Yboxdim{6.5pt} {\young(\null)}, 
 \Yvcentermath1 \Yboxdim{6.5pt} {\young(\null)} \big) \\ 
 \Yvcentermath1 \Yboxdim{16pt} 
 \young(2,\null{\null}1) & \ \longleftrightarrow \ 
 \big( \varnothing, \varnothing, \Yvcentermath1 \Yboxdim{6.5pt} {\young(\null)}, 
 \varnothing, 
 \Yvcentermath1 \Yboxdim{6.5pt} {\young(\null)} \big) \\ 
 \Yvcentermath1 \Yboxdim{16pt} 
 \young(\twelve,\null{\null}\null) & \ \longleftrightarrow \ 
 \big( \varnothing, \varnothing, 
 \varnothing, 
 \varnothing, 
 \Yvcentermath1 \Yboxdim{6.5pt} {\young(\null)} \big) \\ 
 \Yvcentermath1 \Yboxdim{16pt} 
 \young(2,1,{\null}\null) & \ \longleftrightarrow \ 
 \big( \varnothing, \varnothing, 
 \Yvcentermath1 \Yboxdim{6.5pt} {\young(\null)}, \Yvcentermath1 \Yboxdim{6.5pt} {\young(\null)}, 
 \Yvcentermath1 \Yboxdim{6.5pt} {\young(\null,\null)} \big) \\ 
 \Yvcentermath1 \Yboxdim{16pt} 
 \young(12,{\null}\null) & \ \longleftrightarrow \ 
 \big( \varnothing, \varnothing, 
 \Yvcentermath1 \Yboxdim{6.5pt} {\young(\null)}, \Yvcentermath1 \Yboxdim{6.5pt} {\young(\null)}, 
 \Yvcentermath1 \Yboxdim{6.5pt} {\young(\null\null)} \big) 
\end{align*}
\end{example}

 Being consistent with our notation associated with Theorem \ref{theoremColmenarejo}, for an order-$k$ partition diagram $\pi$, 
 we write 
\begin{equation}\label{RSKRSK1RSK2}
 \text{RSK}(\pi) = \left( \text{RSK}_{1}(\pi), \, \text{RSK}_{2}(\pi) \right) 
\end{equation}
 to denote the pair of set-partition tableaux $ \text{RSK}_{1}(\pi)$ and $ \text{RSK}_{2}(\pi) $ of the same shape with $2k$ cells 
 obtained through the application to $\pi$ of the above RSK-like bijection for partition diagrams that is due to Colmenarejo et al.\ 
 \cite{ColmenarejoOrellanaSaliolaSchillingZabrocki2020}. We proceed to set 
\begin{equation*}
 m_{\pi} := \textsf{m}_{ \left( \text{{BHtoHR}}( \text{BH}_{k}(\text{RSK}_{1}(\pi))), 
 \text{{BHtoHR}}( \text{BH}_{k}(\text{RSK}_{2}(\pi)) ) \right) } 
\end{equation*}
 for a partition diagram of order $k$. 

\begin{theorem}
 If $\mathbb{C}A_k(n)$ is semisimple, then the family $\{ m_{\pi} \}_{\pi}$ consisting of expressions of the form $m_{\pi}$ for an 
 order-$k$ partition diagram $\pi$ is a basis of $\mathbb{C}A_k(n)$. 
\end{theorem}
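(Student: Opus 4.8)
The plan is to recognize that the family $\{ m_{\pi} \}_{\pi}$ is merely a reindexing of the basis $\{ \textsf{m}_{P, Q} \}_{P, Q}$ produced in Theorem \ref{thmEbasis}, so that the entire statement reduces to verifying that the assignment
$$
\phi\colon \pi \longmapsto \big( \text{BHtoHR}(\text{BH}_{k}(\text{RSK}_{1}(\pi))), \ \text{BHtoHR}(\text{BH}_{k}(\text{RSK}_{2}(\pi))) \big)
$$
is a bijection from the set of order-$k$ partition diagrams onto the set of pairs $(P, Q)$ of order-$k$ Halverson--Ram vacillating tableaux ending with a common partition, which is precisely the index set of the $\textsf{m}$-basis. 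Once $\phi$ is shown to be such a bijection, we have $\{ m_{\pi} \}_{\pi} = \{ \textsf{m}_{\phi(\pi)} \}_{\pi} = \{ \textsf{m}_{P, Q} \}_{P, Q}$ as sets, and the latter is a basis of $\mathbb{C}A_k(n)$ by Theorem \ref{thmEbasis}.

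First I would establish that $\phi$ is well defined, i.e., that the two tableaux $P$ and $Q$ always terminate at the same partition. By Theorem \ref{theoremColmenarejo} (taken with $n = 2k$, matching the convention $\eta = 2k$), the pair $(\text{RSK}_{1}(\pi), \text{RSK}_{2}(\pi))$ consists of two set-partition tableaux of a common shape $\lambda \vdash 2k$. The key point is that $\text{BHtoHR} \circ \text{BH}_{k}$ sends a set-partition tableau of shape $\lambda$ to a Halverson--Ram vacillating tableau whose terminal partition depends only on $\lambda$: the recursive procedure defining $\text{BH}_{k}$ produces a Benkart--Halverson vacillating tableau whose final entry records the full shape $\lambda$, and applying $\text{BHtoHR}$ (removal of the initial part of each entry) yields a Halverson--Ram vacillating tableau ending at $\lambda^{\#} = (\lambda_{2}, \lambda_{3}, \ldots, \lambda_{\ell(\lambda)})$, as can be confirmed on each of the worked pairings above. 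Thus the common shape $\lambda$ of $\text{RSK}_{1}(\pi)$ and $\text{RSK}_{2}(\pi)$ forces $P$ and $Q$ to end at the common partition $\lambda^{\#}$, so $\textsf{m}_{P, Q}$ is defined and $\phi$ indeed lands in the intended index set.

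Next I would assemble the bijectivity of $\phi$ from that of its three constituents. The map $\pi \mapsto (\text{RSK}_{1}(\pi), \text{RSK}_{2}(\pi))$ is a bijection onto same-shape pairs of set-partition tableaux with $2k$ cells by Theorem \ref{theoremColmenarejo}; the Benkart--Halverson map $\text{BH}_{k}$ is a bijection between set-partition tableaux and Benkart--Halverson vacillating tableaux; and $\text{BHtoHR}$ is a bijection by definition, being declared the inverse of $\text{HRtoBH}$. Applying $\text{BH}_{k}$ and $\text{BHtoHR}$ coordinatewise therefore yields a bijection on pairs, and by the shape-to-terminal-partition correspondence $\lambda \mapsto \lambda^{\#}$ just described, this bijection carries same-shape pairs exactly onto pairs of Halverson--Ram vacillating tableaux sharing a common terminal partition. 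Composing the three maps shows that $\phi$ is a bijection onto the index set of the $\textsf{m}$-basis. As a consistency check, both sides have cardinality $\dim \mathbb{C}A_k(n)$: the domain is counted by the number of set-partitions of a $2k$-element set, while the codomain is counted by $\sum_{\lambda} \big( | \hat{A}_{k}^{\lambda} | \big)^{2}$, and both agree with the dimension of $\mathbb{C}A_k(n)$.

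I expect the main obstacle to be the bookkeeping in the middle step, namely pinning down precisely how the shape $\lambda$ of a set-partition tableau is transported through $\text{BH}_{k}$ and then $\text{BHtoHR}$ to the terminal partition $\lambda^{\#}$ of the resulting Halverson--Ram vacillating tableau. This amounts to tracing the recursive row-insertion and box-removal steps of the Benkart--Halverson algorithm carefully enough to confirm that the overall shape is left untouched by the procedure, so that the ``same shape'' relation on the RSK side matches exactly the ``same terminal partition'' relation required for $\textsf{m}_{P, Q}$ to be defined. Given that correspondence, the remaining composition of bijections and the appeal to Theorem \ref{thmEbasis} are purely formal.
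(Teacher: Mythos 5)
Your proposal is correct and follows essentially the same route as the paper's proof, which likewise deduces the result from Theorem \ref{thmEbasis} together with the bijectivity of the RSK map for partition diagrams, of $\text{BH}_{k}$, and of $\text{BHtoHR}$. Your additional care in checking that the composite map lands in the correct index set (pairs of vacillating tableaux with a common terminal partition, via the shape-preservation of the Colmenarejo et al.\ correspondence) is a detail the paper leaves implicit, but it does not change the argument.
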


\begin{proof}
 This follows from $\{ \textsf{m}_{P, Q} \}_{P, Q}$ being a basis of $\mathbb{C}A_k(n)$ for pairs $(P, Q)$ of vacillating tableaux ending 
 at level $k$ in $\hat{A}$ with the same final entry, together with the bijectivity of the RSK mapping for partition diagrams, the 
 bijectivity of Benkart and Halverson's mapping $\text{BH}_{k}$, and the bijectivity of the mapping $\text{{BHtoHR}}$. 
\end{proof}

\begin{remark}
 By mimicking our construction of $m_{\pi}$, with the use of matrix unit bases for $\mathbb{C}A_k(n)$ other than the 
 Young--Halverson--Ram matrix unit bases for $\mathbb{C}A_k(n)$, this can be used to produce infinite families of bases for 
 $\mathbb{C}A_k(n)$ that give $\mathbb{C}A_k(n)$ the structure of a monoid algebra (for the semisimple case). For convenience (due 
 to certain technical matters related to the scalar coefficients required in an explicit version of the relation in \eqref{HRrecsim} according 
 to Halverson and Ram's construction), we have used a variant $\{ \widetilde{e}^{\lambda}_{v_{1}, v_{2}} \}_{\lambda, v_1, v_2}$ of the 
 Young--Halverson--Ram matrix unit basis $\{ {e}^{\lambda}_{v_{1}, v_{2}} \}_{\lambda, v_1, v_2}$ of $\mathbb{C}A_2(n)$ to construct 
 the motivating example in Section \ref{subsectionmotivating}, with $\widetilde{e}^{\lambda}_{v_{1}, v_{2}} \sim {e}^{\lambda}_{v_{1}, 
 v_{2}}$ for every Young--Halverson--Ram matrix unit $ {e}^{\lambda}_{v_{1}, v_{2}} \in \mathbb{C}A_2(n)$ and with equality for the 
 idempotent case. 
\end{remark}

\begin{definition}
 We refer to the basis $M_{k} = M = \{ m_{\pi} \}_{\pi}$ as the \emph{monoid basis} of $\mathbb{C}A_k(n)$. 
\end{definition}

 We have thus constructed a basis $M_k$ of $\mathbb{C}A_k(n)$ that gives $\mathbb{C}A_k(n)$ the structure of a monoid algebra, again 
 for the non-degenerate case such that $n \in \mathbb{C} \setminus \{ 0, 1, \ldots, 2 k - 2 \}$, writing $\mathbb{C}A_k(n) 
 = \mathbb{C} M_k$. 

\subsection{Multiplication in the monoid basis}
 The goal of this section is to make the closure property in \eqref{malphambeta} explicit by formulating a combinatorial 
 rule for multiplying monoid basis elements. 

\begin{theorem}
 Let $\pi^{(1)}$ and $\pi^{(2)}$ be order-$k$ partition diagrams. For $i \in \{ 1, 2 \}$, if $\pi^{(i)}$ corresponds to the set-partition 
 $\{ \{ 1, 1' \}, \{ 2, 2' \}, \ldots, \{ k, k' \} \}$, then $m_{\pi^{(i)}}$ is the unique identity element in $M_{k}$. Suppose that neither 
 $\pi^{(1)}$ not $\pi^{(2)}$ is the identity element of $M_{k}$. If 
\begin{equation*}
 \text{\emph{RSK}}_{2}(\pi^{(1)}) \neq \text{\emph{RSK}}_{1}(\pi^{(2)}), 
\end{equation*}
 then 
\begin{equation}\label{indexedcross}
 m_{\pi^{(1)}} m_{\pi^{(2)}} = m_{\{ \{ 1, k' \}, \{ 2, (k-1)' \}, \ldots, \{ k, 1' \} \} }. 
\end{equation}
 Otherwise, we have that 
\begin{equation}\label{multthmOW}
 m_{\pi^{(1)}} m_{\pi^{(2)}} = m_{ \text{\emph{RSK}}^{-1}\left( \text{\emph{RSK}}_{1}(\pi^{(1)}), \ 
 \text{\emph{RSK}}_{2}(\pi^{(2)}) \right) }. 
\end{equation}
\end{theorem}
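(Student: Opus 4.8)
The plan is to reduce the entire statement to the monoid multiplication rule for the family $\{\textsf{m}_{P,Q}\}_{P,Q}$ already established in the proof of Theorem \ref{thmEbasis}, and then to transport that rule through the composite bijection $\pi \mapsto (\psi(\text{RSK}_1(\pi)), \psi(\text{RSK}_2(\pi)))$, where I abbreviate $\psi = \text{BHtoHR} \circ \text{BH}_k$. Recall that $m_\pi = \textsf{m}_{(\psi(\text{RSK}_1(\pi)), \psi(\text{RSK}_2(\pi)))}$, that $\psi$ is a bijection, and that $\text{RSK}_1(\pi)$ and $\text{RSK}_2(\pi)$ always share a common shape $\lambda \vdash 2k$ by Theorem \ref{theoremColmenarejo}; consequently $\psi(\text{RSK}_1(\pi))$ and $\psi(\text{RSK}_2(\pi))$ terminate on the common partition $\lambda^{\#}$, so every $m_\pi$ is genuinely of the form $\textsf{m}_{P,Q}$ with $P,Q$ ending on the same shape, as required for $\textsf{m}_{P,Q}$ to be defined.

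First I would pin down the two distinguished diagrams by direct computation. For the identity set-partition every block is propagating, the associated generalized permutation has strictly increasing top and bottom rows, so Schensted insertion produces a single-row pair $\mathcal{S}_1 = \mathcal{S}_2$; there are no non-propagating blocks, so the padding step appends a length-$k$ row of $\varnothing$-cells, yielding $\text{RSK}_1 = \text{RSK}_2$ of shape $(k,k)$ with $\lambda^{\#} = (k)$. Hence $m_{\{\{1,1'\},\ldots,\{k,k'\}\}} = \textsf{m}_{(\varnothing,\ldots,(k)),(\varnothing,\ldots,(k))}$, which is precisely the monoid identity identified in the proof of Theorem \ref{thmEbasis}. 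For the cross diagram $\{\{1,k'\},\ldots,\{k,1'\}\}$ every block is again propagating, but now the generalized permutation is the reversal, whose insertion yields a single-column pair $\mathcal{S}_1 = \mathcal{S}_2$; padding appends a length-$k$ row of $\varnothing$-cells beneath the column, producing $\text{RSK}_1 = \text{RSK}_2$ of shape $(k,1^k)$ with $\lambda^{\#} = (1^k)$. Hence $m_{\{\{1,k'\},\ldots,\{k,1'\}\}} = \textsf{m}_{(\varnothing,\ldots,(1^k)),(\varnothing,\ldots,(1^k))}$. I would also record that, since $\pi \mapsto m_\pi$ is a bijection, the identity diagram is the unique $\pi$ with $m_\pi$ equal to the monoid identity $\textsf{m}_{(\varnothing,\ldots,(k)),(\varnothing,\ldots,(k))}$; equivalently it is the only diagram whose associated pair of vacillating tableaux terminates on $(k)$. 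Thus ``neither $\pi^{(1)}$ nor $\pi^{(2)}$ is the identity'' is exactly the hypothesis---no coordinate ending on $(k)$---under which the multiplication rule of Theorem \ref{thmEbasis} was stated.

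With these identifications in hand, the two remaining cases follow by transport. Writing $P_1 = \psi(\text{RSK}_1(\pi^{(1)}))$, $P_2 = \psi(\text{RSK}_2(\pi^{(1)}))$, $P_3 = \psi(\text{RSK}_1(\pi^{(2)}))$, and $P_4 = \psi(\text{RSK}_2(\pi^{(2)}))$, bijectivity of $\psi$ gives $\text{RSK}_2(\pi^{(1)}) \neq \text{RSK}_1(\pi^{(2)}) \Longleftrightarrow P_2 \neq P_3$. When $P_2 \neq P_3$, the rule of Theorem \ref{thmEbasis} gives $\textsf{m}_{P_1,P_2}\textsf{m}_{P_3,P_4} = \textsf{m}_{(\varnothing,\ldots,(1^k)),(\varnothing,\ldots,(1^k))}$, which I have just identified with $m_{\{\{1,k'\},\ldots,\{k,1'\}\}}$, giving \eqref{indexedcross}. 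When instead $\text{RSK}_2(\pi^{(1)}) = \text{RSK}_1(\pi^{(2)})$, this common tableau forces $\text{RSK}_1(\pi^{(1)})$ and $\text{RSK}_2(\pi^{(2)})$ to share a shape, so $\gamma := \text{RSK}^{-1}(\text{RSK}_1(\pi^{(1)}), \text{RSK}_2(\pi^{(2)}))$ is a well-defined order-$k$ partition diagram with $\text{RSK}_1(\gamma) = \text{RSK}_1(\pi^{(1)})$ and $\text{RSK}_2(\gamma) = \text{RSK}_2(\pi^{(2)})$; applying $\psi$ then gives $m_\gamma = \textsf{m}_{P_1,P_4}$. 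Since $P_2 = P_3$, the matching branch of the rule of Theorem \ref{thmEbasis} gives $\textsf{m}_{P_1,P_2}\textsf{m}_{P_3,P_4} = \textsf{m}_{P_1,P_4} = m_\gamma$, which is \eqref{multthmOW}.

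The main obstacle I anticipate is the computational core of the second paragraph: verifying in full generality (rather than only for $k = 2$, as in Example \ref{examplebijCOSSZ}) that the identity and cross diagrams land on the shapes $(k,k)$ and $(k,1^k)$ respectively under the RSK-like bijection together with its padding convention, and then tracking them through $\text{BH}_k$ and $\text{BHtoHR}$ to confirm the terminal partitions $(k)$ and $(1^k)$. A secondary delicate point is the bookkeeping around the edge cases of the piecewise definition of $\textsf{m}_{P,Q}$---in particular confirming that whenever some coordinate ends on $(1^k)$ the relevant products still collapse consistently---but this is already subsumed by the uniform multiplication rule proved for $\{\textsf{m}_{P,Q}\}_{P,Q}$, so no separate argument is needed beyond invoking it.
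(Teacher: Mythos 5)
Your proposal is correct and follows essentially the same route as the paper: both reduce the statement to the $\textsf{m}$-basis multiplication rules established in the proof of Theorem \ref{thmEbasis} and then transport them through the composite bijection $\text{BHtoHR} \circ \text{BH}_{k} \circ \text{RSK}_{i}$, with the only substantive verification being that the identity and cross diagrams correspond to set-partition tableaux of shapes $(k,k)$ and $(k,1^{k})$, hence to the vacillating tableaux terminating in $(k)$ and $(1^{k})$ respectively. Your write-up is in fact more complete than the paper's one-sentence proof, which only records the hook-shape identification for the cross diagram and leaves the rest implicit.
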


\begin{proof}
 This follows from the multiplication rules for the $\textsf{m}$-basis given in the proof of Theorem \ref{thmEbasis} together with the 
 property whereby the application of $\text{BH}_{k}^{-1} \circ \text{{HRtoBH}}$ to the order-$k$ Halverson--Ram vacillating tableau of 
 the form $$ (\varnothing, \varnothing, (1), (1), (1^2), (1^2), \ldots, (1^{k}) ) $$ produces the hook-shaped set-partition tableau 
 without any labels in the first row and with $k + 1$ cells in the first column, as may be verified inductively according to the algorithm 
 for $\text{BH}_{k}^{-1}$ given by Benkart and Halverson \cite{BenkartHalverson2019trends}. 
\end{proof}

\begin{example}
 We illustrate the product rules in both \eqref{indexedcross} and \eqref{multthmOW}, and we illustrate these product rules in contrast 
 to the diagram basis. Being consistent with the notation in Examples \ref{examplenotclosed}, \ref{exampleYHR}, and 
 \ref{examplebijCOSSZ}, we let the diagram basis element $d_{\pi}$ 
 be denoted with the partition diagram $\pi$ 
 with uncoloured vertices. By way of contrast, we let the monoid basis element 
 $m_{\pi}$ be denoted with a graph equivalent to $\pi$
 and with coloured vertices. 
 Being consistent with the ordering in Section \ref{subsectionmotivating} 
 for order-2 partition diagrams, we begin by considering the product 
\begin{equation*}
 d_{\pi_{2}} d_{\pi_{10}} = \left( \begin{tikzpicture}[scale = 0.5,thick, baseline={(0,-1ex/2)}] 
\tikzstyle{vertex} = [shape = circle, minimum size = 7pt, inner sep = 1pt] 
\node[vertex] (G--2) at (1.5, -1) [shape = circle, draw] {}; 
\node[vertex] (G-1) at (0.0, 1) [shape = circle, draw] {}; 
\node[vertex] (G-2) at (1.5, 1) [shape = circle, draw] {}; 
\node[vertex] (G--1) at (0.0, -1) [shape = circle, draw] {}; 
\draw[] (G-1) .. controls +(0.5, -0.5) and +(-0.5, -0.5) .. (G-2); 
\draw[] (G-2) .. controls +(0, -1) and +(0, 1) .. (G--2); 
\draw[] (G--2) .. controls +(-0.75, 1) and +(0.75, -1) .. (G-1); 
\end{tikzpicture} \right) \left( \begin{tikzpicture}[scale = 0.5,thick, baseline={(0,-1ex/2)}] 
\tikzstyle{vertex} = [shape = circle, minimum size = 7pt, inner sep = 1pt] 
\node[vertex] (G--2) at (1.5, -1) [shape = circle, draw] {}; 
\node[vertex] (G--1) at (0.0, -1) [shape = circle, draw] {}; 
\node[vertex] (G-2) at (1.5, 1) [shape = circle, draw] {}; 
\node[vertex] (G-1) at (0.0, 1) [shape = circle, draw] {}; 
\draw[] (G-2) .. controls +(0, -1) and +(0, 1) .. (G--2); 
\draw[] (G--2) .. controls +(-0.5, 0.5) and +(0.5, 0.5) .. (G--1); 
\draw[] (G--1) .. controls +(0.75, 1) and +(-0.75, -1) .. (G-2); 
\end{tikzpicture} \right) 
 = n \, d_{\pi_{1}} = n \, 
 \begin{tikzpicture}[scale = 0.5,thick, baseline={(0,-1ex/2)}] 
 \tikzstyle{vertex} = [shape = circle, minimum size = 7pt, inner sep = 1pt] 
 \node[vertex] (G--2) at (1.5, -1) [shape = circle, draw] {}; 
 \node[vertex] (G--1) at (0.0, -1) [shape = circle, draw] {}; 
 \node[vertex] (G-1) at (0.0, 1) [shape = circle, draw] {}; 
 \node[vertex] (G-2) at (1.5, 1) [shape = circle, draw] {}; 
 \draw[] (G-1) .. controls +(0.5, -0.5) and +(-0.5, -0.5) .. (G-2); 
 \draw[] (G-2) .. controls +(0, -1) and +(0, 1) .. (G--2); 
 \draw[] (G--2) .. controls +(-0.5, 0.5) and +(0.5, 0.5) .. (G--1); 
 \draw[] (G--1) .. controls +(0, 1) and +(0, -1) .. (G-1); 
 \end{tikzpicture}. 
\end{equation*}
 For the product 
\begin{equation*}
 m_{\pi_{2}} m_{\pi_{10}} = \left( \begin{tikzpicture}[scale = 0.5,thick, baseline={(0,-1ex/2)}] 
\tikzstyle{vertex} = [shape = circle, fill=blue, minimum size = 7pt, inner sep = 1pt] 
\node[vertex] (G--2) at (1.5, -1) [shape = circle, draw] {}; 
\node[vertex] (G-1) at (0.0, 1) [shape = circle, draw] {}; 
\node[vertex] (G-2) at (1.5, 1) [shape = circle, draw] {}; 
\node[vertex] (G--1) at (0.0, -1) [shape = circle, draw] {}; 
\draw[] (G-1) .. controls +(0.5, -0.5) and +(-0.5, -0.5) .. (G-2); 
\draw[] (G-2) .. controls +(0, -1) and +(0, 1) .. (G--2); 
\draw[] (G--2) .. controls +(-0.75, 1) and +(0.75, -1) .. (G-1); 
\end{tikzpicture} \right) \left( \begin{tikzpicture}[scale = 0.5,thick, baseline={(0,-1ex/2)}] 
\tikzstyle{vertex} = [shape = circle, fill=blue, minimum size = 7pt, inner sep = 1pt] 
\node[vertex] (G--2) at (1.5, -1) [shape = circle, draw] {}; 
\node[vertex] (G--1) at (0.0, -1) [shape = circle, draw] {}; 
\node[vertex] (G-2) at (1.5, 1) [shape = circle, draw] {}; 
\node[vertex] (G-1) at (0.0, 1) [shape = circle, draw] {}; 
\draw[] (G-2) .. controls +(0, -1) and +(0, 1) .. (G--2); 
\draw[] (G--2) .. controls +(-0.5, 0.5) and +(0.5, 0.5) .. (G--1); 
\draw[] (G--1) .. controls +(0.75, 1) and +(-0.75, -1) .. (G-2); 
\end{tikzpicture} \right), 
\end{equation*}
 the product rule in \eqref{multthmOW} gives us that 
\begin{align*}
 m_{\pi_{2}} m_{\pi_{10}} 
 & = m_{ \text{{RSK}}^{-1}\left( \Yvcentermath1 \Yboxdim{11pt} 
 \young(\twelve,\null{\null}\null), \ \Yvcentermath1 \Yboxdim{11pt} 
 \young(\twelve,\null{\null}\null) \right) } \\ 
 & = m_{\pi_1} \\ 
 & = \begin{tikzpicture}[scale = 0.5,thick, baseline={(0,-1ex/2)}] 
 \tikzstyle{vertex} = [shape = circle, fill=blue, minimum size = 7pt, inner sep = 1pt] 
 \node[vertex] (G--2) at (1.5, -1) [shape = circle, draw] {}; 
 \node[vertex] (G--1) at (0.0, -1) [shape = circle, draw] {}; 
 \node[vertex] (G-1) at (0.0, 1) [shape = circle, draw] {}; 
 \node[vertex] (G-2) at (1.5, 1) [shape = circle, draw] {}; 
 \draw[] (G-1) .. controls +(0.5, -0.5) and +(-0.5, -0.5) .. (G-2); 
 \draw[] (G-2) .. controls +(0, -1) and +(0, 1) .. (G--2); 
 \draw[] (G--2) .. controls +(-0.5, 0.5) and +(0.5, 0.5) .. (G--1); 
 \draw[] (G--1) .. controls +(0, 1) and +(0, -1) .. (G-1); 
 \end{tikzpicture}.
\end{align*}
 As for the product rule in \eqref{indexedcross}, we find 
 that $$ d_{\pi_7} d_{\pi_7} 
 = \left( 
 \begin{tikzpicture}[scale = 0.5,thick, baseline={(0,-1ex/2)}] 
\tikzstyle{vertex} = [shape = circle, minimum size = 7pt, inner sep = 1pt] 
\node[vertex] (G--2) at (1.5, -1) [shape = circle, draw] {}; 
\node[vertex] (G-1) at (0.0, 1) [shape = circle, draw] {}; 
\node[vertex] (G--1) at (0.0, -1) [shape = circle, draw] {}; 
\node[vertex] (G-2) at (1.5, 1) [shape = circle, draw] {}; 
\draw[] (G-1) .. controls +(0.75, -1) and +(-0.75, 1) .. (G--2); 
\draw[] (G-2) .. controls +(-0.75, -1) and +(0.75, 1) .. (G--1); 
\end{tikzpicture} \right) \left( 
 \begin{tikzpicture}[scale = 0.5,thick, baseline={(0,-1ex/2)}] 
\tikzstyle{vertex} = [shape = circle, minimum size = 7pt, inner sep = 1pt] 
\node[vertex] (G--2) at (1.5, -1) [shape = circle, draw] {}; 
\node[vertex] (G-1) at (0.0, 1) [shape = circle, draw] {}; 
\node[vertex] (G--1) at (0.0, -1) [shape = circle, draw] {}; 
\node[vertex] (G-2) at (1.5, 1) [shape = circle, draw] {}; 
\draw[] (G-1) .. controls +(0.75, -1) and +(-0.75, 1) .. (G--2); 
\draw[] (G-2) .. controls +(-0.75, -1) and +(0.75, 1) .. (G--1); 
\end{tikzpicture} \right) 
 = d_{\pi_9} 
 = \begin{tikzpicture}[scale = 0.5,thick, baseline={(0,-1ex/2)}] 
\tikzstyle{vertex} = [shape = circle, minimum size = 7pt, inner sep = 1pt] 
\node[vertex] (G--2) at (1.5, -1) [shape = circle, draw] {}; 
\node[vertex] (G-2) at (1.5, 1) [shape = circle, draw] {}; 
\node[vertex] (G--1) at (0.0, -1) [shape = circle, draw] {}; 
\node[vertex] (G-1) at (0.0, 1) [shape = circle, draw] {}; 
\draw[] (G-2) .. controls +(0, -1) and +(0, 1) .. (G--2); 
\draw[] (G-1) .. controls +(0, -1) and +(0, 1) .. (G--1); 
\end{tikzpicture}, $$
 whereas 
 that $$ m_{\pi_7} m_{\pi_7} 
 = \left( 
 \begin{tikzpicture}[scale = 0.5,thick, baseline={(0,-1ex/2)}] 
\tikzstyle{vertex} = [shape = circle, fill = blue, minimum size = 7pt, inner sep = 1pt] 
\node[vertex] (G--2) at (1.5, -1) [shape = circle, draw] {}; 
\node[vertex] (G-1) at (0.0, 1) [shape = circle, draw] {}; 
\node[vertex] (G--1) at (0.0, -1) [shape = circle, draw] {}; 
\node[vertex] (G-2) at (1.5, 1) [shape = circle, draw] {}; 
\draw[] (G-1) .. controls +(0.75, -1) and +(-0.75, 1) .. (G--2); 
\draw[] (G-2) .. controls +(-0.75, -1) and +(0.75, 1) .. (G--1); 
\end{tikzpicture} \right) \left( 
 \begin{tikzpicture}[scale = 0.5,thick, baseline={(0,-1ex/2)}] 
\tikzstyle{vertex} = [shape = circle, fill = blue, minimum size = 7pt, inner sep = 1pt] 
\node[vertex] (G--2) at (1.5, -1) [shape = circle, draw] {}; 
\node[vertex] (G-1) at (0.0, 1) [shape = circle, draw] {}; 
\node[vertex] (G--1) at (0.0, -1) [shape = circle, draw] {}; 
\node[vertex] (G-2) at (1.5, 1) [shape = circle, draw] {}; 
\draw[] (G-1) .. controls +(0.75, -1) and +(-0.75, 1) .. (G--2); 
\draw[] (G-2) .. controls +(-0.75, -1) and +(0.75, 1) .. (G--1); 
\end{tikzpicture} \right) 
 = m_{\pi_7} 
 = \begin{tikzpicture}[scale = 0.5,thick, baseline={(0,-1ex/2)}] 
\tikzstyle{vertex} = [shape = circle, fill = blue, minimum size = 7pt, inner sep = 1pt] 
\node[vertex] (G--2) at (1.5, -1) [shape = circle, draw] {}; 
\node[vertex] (G-1) at (0.0, 1) [shape = circle, draw] {}; 
\node[vertex] (G--1) at (0.0, -1) [shape = circle, draw] {}; 
\node[vertex] (G-2) at (1.5, 1) [shape = circle, draw] {}; 
\draw[] (G-1) .. controls +(0.75, -1) and +(-0.75, 1) .. (G--2); 
\draw[] (G-2) .. controls +(-0.75, -1) and +(0.75, 1) .. (G--1); 
\end{tikzpicture}. $$
\end{example}

\section{Conclusion}
 We conclude with the problem that is given below and that concerns the semisimplicity of partition algebras. 

 The Halverson--Ram recursive construction for matrix units for semisimple partition algebras is central to our construction. It would be 
 highly desirable, from computational, combinatorial, and representation-theoretic perspectives, to obtain a more explicit and 
 combinatorially meaningful matrix unit construction for partition algebras that provides a direct analogue of Young's (non-recursive) 
 matrix unit formula in \eqref{Youngmainformula}. This is an open problem, despite recent progress on this problem 
 \cite{Campbell2024,Campbell2025} and despite an alternative matrix unit construction for partition algebras introduced recently by 
 Padellaro \cite{Padellaro2023}. 

\subsection*{Acknowledgements}
 The author was supported through an NSERC Discovery Grant made available through Dalhousie University. The author is thankful to 
 Mike Zabrocki for very useful feedback related to this paper.

 \

John M.\ Campbell

\vspace{0.1in}

   Department of Mathematics and Statistics

  Dalhousie University

   Halifax, Nova Scotia,    Canada 

\vspace{0.1in}

{\tt jh241966@dal.ca}

\end{document}